\DeclareMathOperator{\CS}{CSh}
\DeclareMathOperator{\Cusp}{Cusp}
\DeclareMathOperator{\Fun}{Fun}
\DeclareMathOperator{\Fam}{Fam}
\newcommand{\GU}{\ensuremath{\mathrm{GU}}}
\newcommand{\GC}{\mathbf {G}}
\newcommand{\UC}{\mathbf {U}}
\newcommand{\PC}{\mathbf {P}}
\newcommand{\N}{\mathbb {N}}
\newcommand{\R}{\mathbb {R}}
\newcommand{\lam}{\lambda}
\newcommand{\al}{\alpha}
\newcommand{\CB}{\mathrm {C}}
\newcommand{\KK}{\mathbb {F}}
\newcommand{\IBR}{\mathrm {IBr}}
\newcommand{\LC}{\mathbf {L}}
\newcommand{\lmod}[1]{\ensuremath{#1\text{--{\bf mod}}}}
\newcommand\sd{
	\mathchoice{\mkern1.5mu}{\mkern1.5mu}{}{}%
	{:}%
	\mathchoice{\mkern1.5mu}{\mkern1.5mu}{}{}
}
\setlist[enumerate,1]{label=\textnormal{(\alph*)}}
\title{Lusztig Induction, Unipotent Supports, and Character Bounds}
\author{Jay Taylor and Pham Huu Tiep}
\address{J. Taylor, Department of Mathematics, University of Southern California, Los Angeles,
CA 90089, USA}
\email{jayt@usc.edu\\}
\address{P.~H. Tiep, Department of Mathematics,  Rutgers University, Piscataway, NJ 08854, USA}
\email{tiep@math.rutgers.edu}
\keywords{Finite reductive groups, character bounds, Lusztig induction, unipotent support}
\begin{document}
\begin{abstract}
Recently, a strong exponential character bound has been established in 
\cite{bezrukavnikov-liebeck-shalev-tiep:2017:character-bounds-grps-Lie-type} for all elements $g \in \bG^F$ of a finite reductive group $\bG^F$ which satisfy the condition that the centraliser $C_\bG(g)$ is contained in a $(\bG,F)$-split Levi subgroup $\bM$ of $\bG$ and that $\bG$ is defined over a field of good characteristic. In this paper, assuming a weak version of Lusztig's conjecture relating irreducible characters and characteristic functions of character sheaves holds, we considerably generalize this result by removing the condition that $\bM$ is split. This assumption is known to hold whenever $Z(\bG)$ is connected or when $\bG$ is a special linear or symplectic group and $\bG$ is defined over a sufficiently large finite field.
\end{abstract}

\section{Introduction}

\begin{pa}
Assume $\bG$ is a connected reductive algebraic group, defined over an algebraic closure $\mathbb{F} = \overline{\mathbb{F}}_p$ of the finite field $\mathbb{F}_p$ of prime order $p$, and let $F : \bG \to \bG$ be a Frobenius endomorphism corresponding to an $\mathbb{F}_q$-rational structure on $\bG$. The purpose of this article is to contribute to the problem of bounding the {\it character ratios} $|\chi(g)/\chi(1)|$, where $\chi \in \Irr(\bG^F)$ is an irreducible character of the finite fixed point group $\bG^F$ and $g \in \bG^F$.
\end{pa}

\begin{pa}
Upper bounds for absolute values of character values and character ratios in finite groups have long been of interest, particularly because of a number of
applications, including to random generation, covering numbers, mixing times of random walks, the study of word maps, representation varieties and other areas. Many of these applications are connected with the well-known formula
\begin{equation*}
\frac{\prod_{i=1}^k |C_i|}{|G|}\sum_{\chi\in \Irr(G)} \frac{\chi(c_1)\cdots \chi(c_k)\chi(g^{-1})}{\chi(1)^{k-1}}
\end{equation*}
expressing the number of ways of writing an element $g \in G$, of a finite group $G$, as a product $x_1x_2\cdots x_k$ of elements $x_i\in C_i$, where $C_i = c_i^G$ are $G$-conjugacy classes of elements $c_i$, $1 \leq i \leq k$, and the sum is over the set $\Irr(G)$ of all irreducible characters of $G$, see \cite[10.1]{AH}.
\end{pa}

\begin{pa}
We are particularly interested in so-called {\it exponential character bounds}, namely bounds of the form
\begin{equation}\label{eq:char-bound}
  |\chi(g)| \le \chi(1)^{\alpha_g},
\end{equation}
sometimes with a multiplicative constant, holding for {\it all} characters $\chi \in \Irr(G)$, where $0 \le \alpha_g \le 1$ depends on the group element $g \in G$.
\end{pa}

\begin{pa}
Let us denote by $\mathcal{U}(\bG) \subseteq \bG$ the variety of unipotent elements. Following \cite{bezrukavnikov-liebeck-shalev-tiep:2017:character-bounds-grps-Lie-type} we define for any $F$-stable Levi subgroup $\bM \leqslant \bG$ a constant $\alpha_{\bG}(\bM,F)$ as follows. If $\bM$ is a torus then $\alpha_{\bG}(\bM,F) = 0$ otherwise we have
\begin{equation*}
\alpha_{\bG}(\bM,F) = \max_{1\neq u \in \mathcal{U}(\bM)^F} \frac{\dim u^{\bM}}{\dim u^{\bG}}
\end{equation*}
where $u^{\bG} \subseteq \mathcal{U}(\bG)$ is the $\bG$-conjugacy class of $u$, and similarly for $\bM$. Note the maximum is taken over all \emph{non-identity} unipotent elements.
\end{pa}

\begin{pa}
In \cite[Thm.~1.1]{bezrukavnikov-liebeck-shalev-tiep:2017:character-bounds-grps-Lie-type}, Bezrukavnikov, Liebeck, Shalev, and Tiep were able to obtain a bound of the form \cref{eq:char-bound}, in terms of $\alpha_{\bG}(\bM,F)$, assuming that the centraliser $C_{\bG}(g)$ was contained in $\bM$ and $\bM$ was a proper {\it $(\bG,F)$-split} Levi subgroup, i.e., $\bM$ is the Levi complement of an $F$-stable parabolic subgroup of $\bG$. For the elements that the bound holds, the bound has lead to a number of interesting applications, see \cite[\S5]{bezrukavnikov-liebeck-shalev-tiep:2017:character-bounds-grps-Lie-type}, as well as the sequel \cite{LST}.
\end{pa}

\begin{pa}
The main result of this paper, \cref{thm:character-bound} below, generalises the result of \cite{bezrukavnikov-liebeck-shalev-tiep:2017:character-bounds-grps-Lie-type} to the case of a non-split Levi subgroup $\bM \leqslant \bG$. However, to obtain this generalisation we require two assumptions that are known to hold in a wide number of cases, such as when $Z(\bG)$ is connected, but which remain open in general. To give some idea of these assumptions, let us recall that the space of class functions has two orthonormal bases. Namely, one given by the set of irreducible characters and one given by the characteristic functions of character sheaves.
\end{pa}

\begin{pa}
Lusztig has stated a conjecture that gives, up to multiplication by a diagonal matrix of finite order, the change of basis matrix relating the irreducible characters and characteristic functions of character sheaves, see \cite{lusztig:2018:on-the-definitions-of-almost-characters}. We will require a weak version of this conjecture to hold, which states that the change of basis matrix has a certain block diagonal shape. We call this conjecture the \emph{weak Lusztig conjecture} and we denote it by ($\mathcal{W}_{\bG,F}$), see \cref{pa:weak-Lusztig-conj} for a precise definition. We note that this consequence of Lusztig's conjecture is explicitly mentioned by Lusztig in \cite[2.12(c)]{lusztig:2018:on-the-definitions-of-almost-characters}.
\end{pa}

\begin{pa}
Finally, as is typical in the subject, we will need to assume that one of the main results from Lusztig's work \cite{lusztig:1990:green-functions-and-character-sheaves} on Green functions holds. Specifically, we require that parabolically inducing a cuspidal character sheaf then taking characteristic functions is the same as first taking the characteristic function and then applying Deligne--Lusztig induction. We denote this by ($\mathcal{R}_{\bG,F}$), see \cref{pa:property-DL-ind} for a precise definition. After \cite{lusztig:1990:green-functions-and-character-sheaves} this property is known to hold if $q$ is large enough.
\end{pa}

\begin{thm}\label{thm:character-bound}
There exists a function $f : \mathbb{N} \to \mathbb{N}$ such that the following statement holds. Assume $\bG$ is a connected reductive algebraic group of semisimple rank $r$, $F : \bG \to \bG$ is a Frobenius endomorphism, and $p$ is a good prime for $\bG$. Let $n_0 \geqslant 1$ be an integer such that every $F$-stable Levi subgroup of $\bG$ is $(\bG,F^{n_0})$-split. If ($\mathcal{R}_{\bG,F}$) and ($\mathcal{W}_{\bG,F^n}$) hold, with $n \in \{1,n_0\}$, then for any $F$-stable Levi subgroup $\bM \leqslant \bG$ and any element $g \in \bM^F$ with $C_{\bG}^{\circ}(g) \leqslant \bM$ we have that
\begin{equation*}
|\chi(g)| \leqslant f(r)\cdot\chi(1)^{\alpha_{\bG}(\bM,F)}
\end{equation*}
for any irreducible character $\chi \in \Irr(\bG^F)$.
\end{thm}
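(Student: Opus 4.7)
The plan is to combine three ingredients from Deligne--Lusztig theory: Lusztig's Jordan decomposition of characters, the character formula for a mixed element, and the theory of unipotent supports. The hypothesis that $Z(\bG)$ is connected plays a crucial role because it forces the centralisers $C_{\bG^*}(s)$ of semisimple elements of the dual group to be connected; as a result, each $\chi \in \Irr(\bG^F)$ is canonically labelled by a pair $(s,\psi)$ with $s \in \bG^{*F^*}$ semisimple and $\psi$ a unipotent character of $C_{\bG^*}(s)^{F^*}$, and one has $\chi(1) = [\bG^{*F^*} : C_{\bG^*}(s)^{F^*}]_{p'}\,\psi(1)$, so that bounds for $|\chi(g)|$ reduce to bounds for values of unipotent characters on unipotent elements of suitable centralisers.

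Writing the Jordan decomposition in $\bM^F$ as $g = tu$, Lusztig's character formula expresses $\chi(tu)$ as a weighted sum of values of $\psi$ at unipotent elements of a centraliser in the dual group, weighted by two-variable Green functions attached to the connected centraliser $C_{\bG}^{\circ}(t)$. The hypothesis $C_{\bG}^{\circ}(g) \leqslant \bM$ pins $u$ down as a sufficiently regular unipotent element in $C_{\bG}^{\circ}(t)$ so that the relevant tori and Levi subgroups appearing in the character formula can all be taken inside $\bM$; that is, the computation effectively factors through $\bM$. The unipotent values $|\psi(u')|$ are then bounded, via the theory of unipotent supports and the associated degree formula, by $\psi(1)^{\beta}$ where the exponent $\beta$ is controlled by ratios of class dimensions; multiplying through by the semisimple index and comparing $\dim u^{\bM}$ with $\dim u^{\bG}$ is what reproduces precisely the exponent $\alpha_{\bG}(\bM,F)$.

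I expect the main obstacle to be the loss of positivity when passing from split to non-split Levis. In the split case handled by \cite{bezrukavnikov-liebeck-shalev-tiep:2017:character-bounds-grps-Lie-type}, Harish-Chandra restriction is a sum of constituents with non-negative multiplicities, giving a direct triangle-inequality bound for $|\chi(g)|$; for non-split $\bM$, Lusztig induction only yields virtual characters, so the coefficients $\langle \chi, R_{\bM}^{\bG}\lambda\rangle$ are arbitrary integers whose absolute values must be controlled individually. I anticipate handling this by working within Lusztig's families and exploiting the bounded size of the associated Fourier matrices (which depends only on the rank $r$), together with polynomial bounds on Green functions that are uniform in $q$. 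The constant $f(r)$ will then emerge from accumulating these uniformly bounded contributions, while the exponent $\alpha_{\bG}(\bM,F)$ is genuinely dictated by the comparison of unipotent class dimensions in $\bM$ and in $\bG$.
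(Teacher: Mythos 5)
Your plan diverges substantially from the paper's argument and, as written, has several concrete gaps that prevent it from getting to the precise exponent $\alpha_{\bG}(\bM,F)$.

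The paper's opening move is the identity $\chi(g) = {}^*R_{\bM}^{\bG}(\chi)(g)$ (Lemma~\ref{lem:restrict-equal-val}), valid whenever $C_{\bG}^{\circ}(g) \leqslant \bM$. This replaces $\chi(g)$ by a sum $\sum_{\eta} m(\eta,\chi)\,\eta(g)$ over irreducible $\eta \in \Irr(\bM^F)$, so that after the trivial bound $|\eta(g)| \leqslant \eta(1)$ one only has to control three rank-bounded integers: the number of constituents, the sizes of the multiplicities $|m(\eta,\chi)|$, and the ratios $\eta(1)/\chi(1)^{\alpha_{\bG}(\bM,F)}$. Your plan instead tries to compute $\chi(tu)$ directly from a character formula involving two-variable Green functions. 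This is a genuinely different route, and it is much harder: Green functions for a non-split Levi and for arbitrary $\chi$ are not polynomials of uniformly controlled size, the ``character formula'' in the form you invoke is only exact for Deligne--Lusztig virtual characters and becomes a Fourier-transformed expression for a general $\chi$, and nothing in your sketch explains how the resulting sum shrinks to a quantity of order $\chi(1)^{\alpha_{\bG}(\bM,F)}$ rather than $\chi(1)$.

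The deeper missing ingredient is the closure relation on unipotent supports, which is what actually produces the exponent. The paper proves (Theorem~\ref{thm:main-characters}) that $\langle \chi, R_{\bM}^{\bG}(\eta)\rangle \neq 0$ forces $\mathcal{O}_{\eta^*}^{\bM} \subseteq \overline{\mathcal{O}_{\chi^*}^{\bG}}$, from which $A_{\eta} = \tfrac{1}{2}\dim\mathcal{O}_{\eta^*}^{\bM} \leqslant \alpha_{\bG}(\bM,F)\cdot A_{\chi}$ follows by the very definition of $\alpha_{\bG}(\bM,F)$ (comparing $\dim u^{\bM}$ with $\dim u^{\bG}$ for $u \in \mathcal{O}_{\eta^*}^{\bM}$). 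You allude to ``comparing $\dim u^{\bM}$ with $\dim u^{\bG}$'' but give no mechanism to identify \emph{which} $u$ is relevant. Establishing the closure relation in the non-split case is the real content of the paper: it requires relating parabolic induction of character sheaves to Harish-Chandra induction, proving a comparison theorem for endomorphism algebras of induced cuspidal complexes, and using Shoji's matching of character sheaves with Lusztig series to transfer the closure relation from characters to character sheaves and back.

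You correctly identify the positivity obstruction for non-split $\bM$ and propose Fourier matrices to bound the multiplicities $\langle \chi, R_{\bM}^{\bG}(\eta)\rangle$. Families and Fourier matrices do enter the paper's multiplicity bound (Proposition~\ref{prop:multiplicities}), but only in conjunction with the inequality $|\langle R_{\bM}^{\bG}(\mathcal{X}_{A_1}), R_{\bM}^{\bG}(\mathcal{X}_{A_2})\rangle| \leqslant |W_{\bG}(\bL,\Sigma,\mathscr{E})|$, which is obtained by reformulating the left-hand side as an inner product of induced characters on Weyl group \emph{cosets} via the Lusztig--Shoji theorem relating parabolic induction of cuspidal data to $R_{\bL}^{\bG}$. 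Without something like this coset-induction bound, knowing that the Fourier matrices are small does not by itself keep the multiplicities bounded by a function of $r$ alone. In short: your ingredients are topically relevant, but the proof needs the intermediary reduction to ${}^*R_{\bM}^{\bG}(\chi)$, the character-sheaf comparison theorem, and the unipotent-support closure relation, none of which appear in your plan.
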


\begin{rem}
Let $g = su = us \in \bG^F$ be the Jordan decomposition of $g$. We have $u \in C_{\bG}^{\circ}(s)$ because $C_{\bG}(s)/C_{\bG}^{\circ}(s)$ is a $p'$-group, so $g \in C_{\bG}^{\circ}(s)$. Hence, the assumption that $g \in \bM^F$ and $C_{\bG}^{\circ}(g) \leqslant \bM$ is equivalent to the assumption that $g \in \bG^F$ and $C_{\bG}^{\circ}(s) \leqslant \bM$, see \cref{centralisers}.
\end{rem}

\begin{pa}
We now consider exactly when the assumptions of \cref{thm:character-bound} are known to hold. After work of Shoji the properties ($\mathcal{R}_{\bG,F}$) and ($\mathcal{W}_{\bG,F}$) are known to hold whenever $Z(\bG)$ is connected, see \cite{shoji:1995:character-sheaves-and-almost-characters} and \cite[Thm.~4.2]{shoji:1996:on-the-computation}. Moreover, if $\bG$ is $\SL_n(\mathbb{F})$ or $\Sp_{2n}(\mathbb{F})$ then Bonnaf\'e \cite{bonnafe:2006:sln} and Waldspurger \cite{waldspurger:2004:lusztigs-conjecture} respectively have shown Lusztig's conjecture holds assuming $q$ is large enough, so in these cases ($\mathcal{W}_{\bG,F}$) holds. Here, large enough means that the results of \cite{lusztig:1990:green-functions-and-character-sheaves} hold, so in these cases we may assume ($\mathcal{R}_{\bG,F}$) holds whenever ($\mathcal{W}_{\bG,F}$) holds.
\end{pa}


\begin{pa}\label{types A/C}
As mentioned by Lusztig \cite{lusztig:1990:green-functions-and-character-sheaves} there are functions $g_{\A},g_{\C} : \mathbb{N} \to \mathbb{N}$ such that the results of \cite{lusztig:1990:green-functions-and-character-sheaves} hold for $(\bG,F)$ if $\bG = \SL_n(\mathbb{F})$ and $q > g_{\A}(n)$ and if $\bG = \Sp_{2n}(\mathbb{F})$ and $q > g_{\C}(n)$. It is remarked in \cite[3.2]{waldspurger:2004:lusztigs-conjecture} that one may take $g_{\C}(n) = 2n$. However no justification for this is given. We hope to consider the problem of providing explicit bounds in a future work. Combining these remarks with \cref{thm:character-bound} we obtain the following result.
\end{pa}


\begin{thm}\label{thm:character-bound2}
There exists a function $f : \mathbb{N} \to \mathbb{N}$ such that the following statement holds. Assume $\bG$ is a connected reductive algebraic group of semisimple rank $r$, $F : \bG \to \bG$ is a Frobenius endomorphism, and $p$ is a good prime for $\bG$. Assume in addition that at least one of the following conditions holds:
\begin{enumerate}
         \item $Z(\bG)$ is connected.
	 \item $\bG = \SL_n(\mathbb{F})$ and $q > g_{\A}(n)$.
	\item $\bG = \Sp_{2n}(\mathbb{F})$ and $q > g_{\C}(n)$. 
\end{enumerate}
Then for any $F$-stable Levi subgroup $\bM \leqslant \bG$ and any element $g \in \bM^F$ with $C_{\bG}^{\circ}(g) \leqslant \bM$ we have that
\begin{equation*}
|\chi(g)| \leqslant f(r)\cdot\chi(1)^{\alpha_{\bG}(\bM,F)}
\end{equation*}
for any irreducible character $\chi \in \Irr(\bG^F)$.
\end{thm}

\begin{pa}
Let us consider \cref{thm:character-bound} in the case that $\bM \leqslant \bG$ is a $(\bG,F)$-split Levi subgroup. In \cite{bezrukavnikov-liebeck-shalev-tiep:2017:character-bounds-grps-Lie-type} it is shown that if $p$ is a good prime for $\bG$, the derived subgroup of $\bG$ is simple, and $g \in \bG^F$ is an element such that $C_{\bG}(g)^F \leqslant \bM^F$ then the conclusion of \cref{thm:character-bound} holds. However, for such an element we have $g \in \bM^F$ and $C_{\bG}^{\circ}(g) \leqslant \bM$ because $C_{\bG}^{\circ}(g)^F \leqslant \bM^F$ and $\bM$ is $(\bG,F)$-split, see \cref{centralisers}. Hence, our theorem is a generalization of the result in \cite{bezrukavnikov-liebeck-shalev-tiep:2017:character-bounds-grps-Lie-type}. The exact same argument used in (ii) of the proof of \cite[Thm.~1.1]{bezrukavnikov-liebeck-shalev-tiep:2017:character-bounds-grps-Lie-type} shows that, when $\bM$ is $(\bG,F)$-split, it is sufficient to consider the case when $Z(\bG)$ is connected, which is covered by \cref{thm:character-bound2}. Thus, the assumption that the derived subgroup of $\bG$ is simple in \cite{bezrukavnikov-liebeck-shalev-tiep:2017:character-bounds-grps-Lie-type} is not necessary.
\end{pa}

\begin{pa}
After \cref{thm:character-bound2} one could attempt to prove \cref{thm:character-bound} in general by using the standard technique of regular embeddings. However, this does not seem to be effective in all cases. The following, which applies to any finite reductive group in good characteristic, constitutes what can be achieved with this method. This will be helpful for various applications, especially for the following reason. In a number of applications, one can usually rule out the characters of quasisimple groups of Lie type of large degree by various {\it ad hoc} arguments. On the other hand, in the notation of Corollary \ref{cor:bound}, and under the assumptions that $\bG$ is simple, simply connected, and that $\chi$ is not $\widetilde{\bG}^F$-invariant, one can usually show that $\chi(1)$ is very large (of order of magnitude $|\bG^F|^{\frac{1}{4}}$), making it possible to rule $\chi$ out. 
\end{pa}

\begin{cor}\label{cor:bound}
Let $\bG$ be a connected reductive algebraic group of semisimple rank $r$, and let $F : \bG \to \bG$ be a Frobenius endomorphism. Assume that $p$ is a good prime for $\bG$. Then for any $F$-stable Levi subgroup $\bM \leqslant \bG$ and any element $g \in \bM^F$ with $C_{\bG}^{\circ}(g) \leqslant \bM$ we have that
\begin{equation*}
|\chi(g)| \leqslant f(r)\cdot\chi(1)^{\alpha_{\bG}(\bM,F)}
\end{equation*}
for any irreducible character $\chi \in \Irr(\bG^F)$, with $f$ as defined in \cref{thm:character-bound}, provided that at least one of the following holds:
\begin{enumerate}
\item there is a regular embedding $\bG \to \widetilde{\bG}$, with a Frobenius endomorphism $F: \widetilde{\bG} \to \widetilde{\bG}$ extending $F: \bG \to \bG$, such that either $\chi$ is $\widetilde{\bG}^F$-invariant, or $g^{\widetilde{\bG}^F} = g^{\bG^F}$,
\item $C_\bG(g)$ is connected, or
\item $[\bG,\bG]$ is simply connected and $g$ is semisimple.
\end{enumerate}
\end{cor}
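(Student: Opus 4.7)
The plan is to reduce each of the three cases to \cref{thm:character-bound} after passing to a regular embedding $\bG \hookrightarrow \widetilde{\bG}$, in which $Z(\widetilde{\bG})$ is connected, $[\widetilde{\bG},\widetilde{\bG}] = [\bG,\bG]$, and $\widetilde{\bG} = \bG\cdot Z(\widetilde{\bG})$. First I would set $\widetilde{\bM} := \bM\cdot Z(\widetilde{\bG})$, check this is an $F$-stable Levi of $\widetilde{\bG}$, and transfer the hypotheses: since $Z(\widetilde{\bG})$ acts trivially by conjugation we have $C_{\widetilde{\bG}}^{\circ}(g) = C_{\bG}^{\circ}(g)\cdot Z(\widetilde{\bG}) \leqslant \widetilde{\bM}$, while the semisimple rank of $\widetilde{\bG}$ is still $r$ and $p$ remains good. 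As the unipotent varieties and their $\bG$- and $\bM$-conjugacy classes coincide with those of $\widetilde{\bG}$ and $\widetilde{\bM}$, one also has $\alpha_{\widetilde{\bG}}(\widetilde{\bM},F) = \alpha_{\bG}(\bM,F)$. Hence \cref{thm:character-bound} applies inside $\widetilde{\bG}^F$ and yields
\begin{equation*}
|\widetilde{\chi}(g)| \leqslant f(r)\cdot \widetilde{\chi}(1)^{\alpha_{\bG}(\bM,F)}
\end{equation*}
for every $\widetilde{\chi} \in \Irr(\widetilde{\bG}^F)$.

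To handle~(a) I would invoke the classical multiplicity-one statement for restriction under a regular embedding: any constituent $\widetilde{\chi} \in \Irr(\widetilde{\bG}^F)$ of $\mathrm{Ind}_{\bG^F}^{\widetilde{\bG}^F}\chi$ satisfies $\widetilde{\chi}|_{\bG^F} = \chi_1 + \cdots + \chi_t$, where $\chi = \chi_1,\ldots,\chi_t$ are the distinct $\widetilde{\bG}^F$-conjugates of $\chi$. If $\chi$ is $\widetilde{\bG}^F$-invariant then $t = 1$, so $\widetilde{\chi}$ extends $\chi$ and the conclusion follows from the displayed bound evaluated at $g$. If instead $g^{\widetilde{\bG}^F} = g^{\bG^F}$, each conjugate satisfies $\chi_i(g) = \chi(\widetilde{g}_i^{-1} g \widetilde{g}_i) = \chi(g)$ (since $\widetilde{g}_i^{-1} g \widetilde{g}_i$ lies in $\bG^F$ and is $\widetilde{\bG}^F$-conjugate, hence $\bG^F$-conjugate, to $g$), so $\widetilde{\chi}(g) = t\chi(g)$ and $\widetilde{\chi}(1) = t\chi(1)$. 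Dividing by $t$ gives
\begin{equation*}
|\chi(g)| \leqslant f(r)\cdot t^{\alpha_{\bG}(\bM,F)-1}\cdot\chi(1)^{\alpha_{\bG}(\bM,F)} \leqslant f(r)\cdot\chi(1)^{\alpha_{\bG}(\bM,F)},
\end{equation*}
using $\alpha_{\bG}(\bM,F) \leqslant 1$.

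Case~(b) then reduces to the second scenario of~(a): with $C_{\bG}(g)$ connected, $A_{\bG}(g) := C_{\bG}(g)/C_{\bG}^{\circ}(g) = 1$, so the standard parametrization of $\bG^F$-classes inside a fixed $\widetilde{\bG}^F$-class (of an element of $\bG^F$) by $H^1(F, A_{\bG}(g))$ collapses; combined with the inclusion $g^{\widetilde{\bG}^F} \subseteq \bG^F$ coming from $\widetilde{\bG} = \bG\cdot Z(\widetilde{\bG})$, this gives $g^{\widetilde{\bG}^F} = g^{\bG^F}$. For~(c), a theorem of Steinberg states that centralizers of semisimple elements in a connected reductive group with simply connected derived subgroup are connected, so~(c) reduces to~(b). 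The one point requiring care is the first-paragraph transfer of hypotheses together with the multiplicity-one statement for regular embeddings, both of which are standard; no new analytic inequality arises beyond \cref{thm:character-bound}.
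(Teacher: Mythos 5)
Your proposal is correct and follows essentially the same route as the paper: pass to a regular embedding $\bG \hookrightarrow \widetilde{\bG}$, verify that the hypotheses of \cref{thm:character-bound} transfer to $(\widetilde{\bG},\widetilde{\bM},g)$ (including $\alpha_{\widetilde{\bG}}(\widetilde{\bM},F)=\alpha_{\bG}(\bM,F)$), pick a constituent $\widetilde{\chi}$ lying over $\chi$, use Lusztig's multiplicity-one theorem for the restriction $\widetilde{\chi}|_{\bG^F}$, and then in case (a) either take $t=1$ or use $g^{\widetilde{\bG}^F}=g^{\bG^F}$ to conclude $\widetilde{\chi}(g)=t\chi(g)$ while $\widetilde{\chi}(1)=t\chi(1)$, with cases (b) and (c) reducing to (a) via the Lang--Steinberg argument (your $H^1(F,A_\bG(g))$ phrasing is an equivalent packaging) and Steinberg's connectedness theorem respectively. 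The only cosmetic difference is that the paper chooses the conjugating elements in $C_{\widetilde{\bG}^F}(g)$ via the factorization $\widetilde{\bG}^F=C_{\widetilde{\bG}^F}(g)\bG^F$, whereas you argue directly that $\widetilde{g}_i^{-1}g\widetilde{g}_i \in g^{\bG^F}$; these are interchangeable.
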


As a consequence of our main result, and combining with results of \cite{bezrukavnikov-liebeck-shalev-tiep:2017:character-bounds-grps-Lie-type}, we obtain the following, explicit and asymptotically optimal, exponential character bound for general and special linear groups:

\begin{thm}\label{thm:glsl}
There is a function $h: \mathbb{N} \to \mathbb{N}$ such that the following statement holds for any $n \geq 5$ and for any prime power $q$. If $G = \GL_n(q)$ or $G=\SL_n(q)$ and $g \in G \smallsetminus Z(G)$, then
\begin{equation*} 
|\chi(g)| \leq h(n)\chi(1)^{(n-2)/(n-1)}
\end{equation*}
for all $\chi \in \Irr(G)$. In fact, if $q > 3n^2$, then one can take $h(n) = 3(f(n-1)+1)$, where $f$ is the function in \cref{thm:character-bound}.
\end{thm}

Our next result yields an asymptotically optimal, exponential character bound for general and special unitary groups:

\begin{thm}\label{thm:gusu}
There are functions $h^*,C^*: \mathbb{N} \to \mathbb{N}$ such that the following statement holds for any $n \geq 10$ and for any prime power $q \geq C^*(n)$.
If $G = \GU_n(q)$ or $G = \SU_n(q)$ and $g \in G \smallsetminus Z(G)$, then
\begin{equation*}
|\chi(g)| \leq h^*(n)\chi(1)^{(n-2)/(n-1)}
\end{equation*}
for all $\chi \in \Irr(G)$. 
\end{thm}

Our next consequence improves on Corollary 1.14 and Theorem 1.15 of \cite{bezrukavnikov-liebeck-shalev-tiep:2017:character-bounds-grps-Lie-type}, 
and extends the main result of \cite{Hil}:
 
\begin{cor}\label{cor:slsu} Let $G= \SL_n(q)$ or $G = \SU_n(q)$,  $x$ be an arbitrary non-central element of $G$, and let $C = x^G$. For $t > 0$ an integer, let $C^t = \{x_1\cdots x_t \mid x_i \in C\}$.
\begin{enumerate}[label={\normalfont (\roman*)}]
	\item If $t>2n$, then $C^t = G$ almost uniformly pointwise as $q\rightarrow \infty$.
	\item The mixing time $T(G,x)$ of the random walk on the Cayley graph $\Gamma(G,C)$ is at most $n$ for large $q$.
\end{enumerate}
\end{cor}

\subsection*{Outline of the Paper}

\begin{pa}
We now give a brief outline of the paper. First, in \cref{1.10} we establish the first main result of the paper, \cref{thm:character-bound},
assuming \cref{thm:main-characters,thm:main-char-sheaves} which will be proved in \cref{sec:proof-of-closure}. 
In \cref{sec:proj-G-sets} we introduce the convenient language of projective $G$-sets which is used in \cref{sec:coset-multiplicities} where we make remarks regarding induction of projective representations from cosets of a finite group. We recall, in \cref{sec:char-sheaves,sec:parabolic-ind-char-sheaves,sec:ind-cusp-char-sheaves}, various constructions and results we need concerning parabolic induction of character sheaves. Property ($\mathcal{R}_{\bG,F}$) is stated in \cref{pa:property-DL-ind}.
\end{pa}

\begin{pa}
In \cref{sec:harish-chandra-param-char-sheaves} we recall the Harish-Chandra parameterization of character sheaves. Our main aim in this section is to give a realisation of characteristic functions of character sheaves that allows us to relate Deligne--Lusztig induction to induction of projective representations on cosets of a finite group, see \cref{prop:bumper-DL-ind-char-sheaves}. This involves a careful analysis of the endomorphism algebra of an induced cuspidal character sheaf.
\end{pa}

\begin{pa}
The main result of \cref{sec:harish-chandra-param-char-sheaves} may be seen as a shadow at the level of functions of a comparison theorem for induction of character sheaves, which we state in \cref{sec:comparison-thms}. In the case of unipotently supported character sheaves these results have been previously established by Lusztig \cite[2.4(d)]{lusztig:1986:on-the-character-values} and Digne--Lehrer--Michel \cite[3.3]{digne-lehrer-michel:1997:gelfand-grave-characters-disconnected}.
\end{pa}

\begin{pa}
Assuming $\bM \leqslant \bG$ is $(\bG,F)$-split, we show in \cref{sec:split-levi-case} that \cref{thm:main-characters} holds following Bezrukavnikov--Liebeck--Shalev--Tiep \cite[2.6]{bezrukavnikov-liebeck-shalev-tiep:2017:character-bounds-grps-Lie-type}. In particular, we remove the assumption in \cite{bezrukavnikov-liebeck-shalev-tiep:2017:character-bounds-grps-Lie-type} that the derived subgroup of $\bG$ is simple. We relate the unipotent supports of characters and character sheaves in \cref{sec:unip-supp}. We are then able to prove \cref{thm:main-characters,thm:main-char-sheaves} in \cref{sec:proof-of-closure}. In \cref{sec:multiplicities} we bound the multiplicities in Deligne--Lusztig induction and prove \cref{cor:bound}. Finally, in \cref{sec:type-A,sec:twisted-type-A} we prove our main results \cref{thm:glsl,thm:gusu} and \cref{cor:slsu} on groups of type $\A$.
\end{pa}

\begin{pa}
At the end of the paper we include an appendix recalling how the simple summands of a semisimple object $A$ of an abelian category $\mathscr{A}$ can be parameterised in terms of the simple modules of the endomorphism algebra $\End_{\mathscr{A}}(A)$. We then study the effect of a linear functor under this parameterisation.
\end{pa}

\begin{pa}[Notation]\label{pa:notation}
For any group $G$ we set ${}^gx := gxg^{-1}$ and $x^g := g^{-1}xg$ for any $g,x \in G$. Moreover, we denote by $\boldsymbol{\iota}_g : G \to G$ the function defined by $\boldsymbol{\iota}_g(x) = gxg^{-1}$. For any category $\mathscr{A}$ we denote by $\Irr(\mathscr{A})$ the isomorphism classes of simple objects in $\mathscr{A}$. The isomorphism class of an object $A \in \mathscr{A}$ will be denoted by $[A] \in \Irr(\mathscr{A})$. If $K \in \mathscr{A}$ is a semisimple object then we denote by $\Irr(\mathscr{A} \mid K)$ the set of isomorphism classes of the simple summands of $K$. If $\mathcal{A}$ is a $k$-algebra, with $k$ a field, then we denote by $\lmod{\mathcal{A}}$ the category of finite dimensional left $\mathcal{A}$-modules.

Throughout all varieties are assumed to be over $\mathbb{F} = \overline{\mathbb{F}}_p$. Moreover, $\bG$ denotes a fixed connected reductive algebraic group and $F : \bG \to \bG$ is a Frobenius endomorphism. We assume $\bT_0 \leqslant \bG$ is a maximal torus and $(\bG^{\star},\bT_0^{\star},F)$ is a triple dual to $(\bG,\bT_0,F)$. We denote by $W_{\bG}(\bT_0)$ the Weyl group $N_{\bG}(\bT_0)/\bT_0$ of $\bG$. If the choice of torus is irrelevant, or implicit, then we simply write $W_{\bG}$ for $W_{\bG}(\bT_0)$.
\end{pa}

\begin{acknowledgments}
The authors thank Gunter Malle for his comments on a preliminary version of this work, C\'edric Bonnaf\'e for useful discussions on the topic of character sheaves, and Roman Bezrukavnikov for helpful discussions on the subject of the paper. Part of the work took place while the authors were in residence at the Mathematical Sciences Research Institute in Berkeley, California, during Spring 2018. The authors gratefully acknowledge the support of the National Science Foundation under grant DMS-1440140. The second author was partially supported by the NSF grant DMS-1840702 and the Joshua Barlaz Chair in Mathematics. Finally, we thank the referee for their careful reading of and 
helpful comments on the paper.
\end{acknowledgments}

\section{Proof of \cref{thm:character-bound}}\label{1.10}
\begin{pa}
Our proof of \cref{thm:character-bound} follows the approach used in \cite{bezrukavnikov-liebeck-shalev-tiep:2017:character-bounds-grps-Lie-type}, although each step is considerably more difficult than in the split case. Here we outline the main steps of the proof with the technical results left to the remaining sections of the paper. Associated to the Levi subgroup $\bM$ we have corresponding Deligne--Lusztig induction and restriction maps $R_{\bM}^{\bG} : \Class(\bM^F) \to \Class(\bG^F)$ and ${}^*R_{\bM}^{\bG} : \Class(\bG^F) \to \Class(\bM^F)$, which are linear maps between the spaces of class functions taking irreducible characters to virtual characters. If $\bM$ is $(\bG,F)$-split then $R_{\bM}^{\bG}$ and ${}^*R_{\bM}^{\bG}$ are simply Harish-Chandra induction and restriction which take irreducible characters to characters.
\end{pa}

\begin{pa}
It is well known that under the assumptions of \cref{thm:character-bound} we have
\begin{equation}\label{eq:lusztig-res-value}
\chi(g) = {}^*R_{\bM}^{\bG}(\chi)(g),
\end{equation}
see \cref{lem:restrict-equal-val}. For any irreducible character $\eta \in \Irr(\bM^F)$ and $\chi \in \Irr(\bG^F)$ we denote by
\begin{equation*}
m(\eta,\chi) = \langle \eta, {}^*R_{\bM}^{\bG}(\chi)\rangle_{\bM^F} = \langle \chi, R_{\bM}^{\bG}(\eta)\rangle_{\bG^F} \in \mathbb{Z}
\end{equation*}
the multiplicity of $\eta$ in ${}^*R_{\bM}^{\bG}(\chi)$. Expanding ${}^*R_{\bM}^{\bG}(\chi)$ in terms of $\Irr(\bM^F)$ we get from \cref{eq:lusztig-res-value} that
\begin{equation}\label{eq:Lusztig-res-bound}
|\chi(g)| \leqslant \sum_{\eta \in \Irr(\bM^F)} |m(\eta,\chi)|\cdot\eta(1),
\end{equation}
where we use the trivial bound $|\eta(g)| \leqslant \eta(1)$. This bound provides our approach to proving \cref{thm:character-bound}.
\end{pa}

\begin{pa}\label{pa:bound}
Let $\Irr(\bM^F \mid {}^*R_{\bM}^{\bG}(\chi))$ denote the set of irreducible constituents of ${}^*R_{\bM}^{\bG}(\chi)$. We will show that there exist three integers $f_1(\bG),f_2(\bG),f_3(\bG) \in \mathbb{N}$ such that all the following inequalities hold:
\begin{enumerate}
	\item $|\Irr(\bM^F \mid {}^*R_{\bM}^{\bG}(\chi))| \leqslant f_1(\bG)$,
	\item $|m(\eta,\chi)| \leqslant f_2(\bG)$ for any $\eta \in \Irr(\bM^F \mid {}^*R_{\bM}^{\bG}(\chi))$,
	\item $\eta(1) \leqslant f_3(\bG)\cdot\chi(1)^{\alpha_{\bG}(\bM,F)}$ for any $\eta \in \Irr(\bM^F \mid {}^*R_{\bM}^{\bG}(\chi))$.
\end{enumerate}
If such integers exist then from \cref{eq:Lusztig-res-bound} we have that
\begin{equation*}
|\chi(g)| \leqslant f(\bG) \cdot \chi(1)^{\alpha_{\bG}(\bM,F)}.
\end{equation*}
where $f(\bG) = f_1(\bG)\cdot f_2(\bG)\cdot f_3(\bG)$.
\end{pa}


\begin{pa}\label{pa:bound_RLG}
Finding an integer $f_1(\bG)$ satisfying \ref{pa:bound}(a) is achieved easily using classic results of Deligne--Lusztig, namely the Mackey formula for tori. In particular, if $W_{\bG}$ is the Weyl group of $\bG$ defined with respect to some (any) maximal torus of $\bG$ then we may take $f_1(\bG) = |W_{\bG}|^2$, see the proof of \cite[Lem.~17]{lusztig:1976:on-the-finiteness}. This requires no assumptions on $(\bG,F)$.
\end{pa}

\begin{pa}
Finding integers satisfying \ref{pa:bound}(b) and \ref{pa:bound}(c) is appreciably more difficult. To tackle both of these problems we rely on our deep assumptions ($\mathcal{R}_{\bG,F}$) and ($\mathcal{W}_{\bG,F}$) which allow us to translate questions concerning Deligne--Lusztig induction $R_{\bM}^{\bG}$ to corresponding questions about parabolic induction of character sheaves.
\end{pa}

\begin{pa}
Our approach to \ref{pa:bound}(b) is to find an integer $f_2'(\bG) \in \mathbb{N}$, depending only on the root system of $\bG$, such that
\begin{equation*}
\langle R_{\bM}^{\bG}(\eta), R_{\bM}^{\bG}(\eta)\rangle_{\bG^F} = \sum_{\chi \in \Irr(\bG^F)} m(\eta,\chi)^2 \leqslant f_2'(\bG)
\end{equation*}
for any irreducible character $\eta \in \Irr(\bM^F)$. We may then take $f_2(\bG) = f_2'(\bG)^{\frac{1}{2}}$ in \ref{pa:bound}(b). Here, a brief comment is in order. As we assume $p$ is good for $\bG$ it is known by work of Bonnaf\'e--Michel \cite{bonnafe-michel:2011:mackey-formula} that the Mackey formula holds. Using the Mackey formula one can obtain a function $f_2'(\bG)$, as above, which is recursively defined. This approach works whenever the Mackey formula holds but the approach we now outline yields an explicit bound, which is significantly better than that achieved with the Mackey formula.
\end{pa}

\begin{pa}
We find $f_2'(\bG)$ in two steps. First, we consider two $F$-stable character sheaves $A_1,A_2 \in \CS(\bM)$ and their corresponding characteristic functions $\mathcal{X}_{A_1}$ and $\mathcal{X}_{A_2}$, defined with respect to some fixed Weil structure. We then obtain a bound
\begin{equation*}
|\langle R_{\bM}^{\bG}(\mathcal{X}_{A_1}), R_{\bM}^{\bG}(\mathcal{X}_{A_2})\rangle_{\bG^F}| \leqslant |W_{\bG}|
\end{equation*}
by expressing the inner product in terms of coset induction in relative Weyl groups, see \cref{prop:inner-prod-char-sheaves}. It is here that we use the property ($\mathcal{R}_{\bG,F}$). Writing $\eta$ as a linear combination of characteristic functions of character sheaves on $\bM$ we are then able to bound $\langle R_{\bM}^{\bG}(\eta), R_{\bM}^{\bG}(\eta)\rangle_{\bG^F}$ once we can bound the number of character sheaves involved in such a decomposition.
\end{pa}

\begin{pa}\label{pa:bound-inner}
For this step we crucially use the weak Lusztig conjecture ($\mathcal{W}_{\bG,F}$) which gives us sharp control over this number. It is at this stage that the following important invariants appear. Namely, if $\bH$ is an algebraic group and $x \in \bH$ is an element then we denote by $A_{\bH}(x)$ the component group $C_{\bH}(x)/C_{\bH}^{\circ}(x)$ of the centraliser of $x$. We set
\begin{equation*}
B(\bG) = |Z(\bG)/Z^{\circ}(\bG)|\cdot \max_{\bH}\max_u |A_{\bH}(u)| \qquad\text{and}\qquad D(\bG) = \max_{\bH}\max_u (\dim u^{\bH})
\end{equation*}
where the maxima are taken over: all semisimple and simply connected groups $\bH$ of rank at most $r$ and all unipotent elements $u \in \mathcal{U}(\bH)$. Using Lusztig's conjecture we are able to show that we may take $f_2'(\bG) = B(\bG)^4 \cdot |W_{\bG}|$, see \cref{prop:multiplicities}.
\end{pa}

\begin{pa}
Now finally let us consider \ref{pa:bound}(c). For each irreducible character $\chi \in \Irr(\bG^F)$ there is a polynomial $\mathbb{D}_{\chi}(t) \in \mathbb{Q}[t]$ such that $\chi(1) = \mathbb{D}_{\chi}(q)$. Moreover, this polynomial has the form
\begin{equation*}
\mathbb{D}_{\chi}(t) = \frac{1}{n_{\chi}}(t^{A_{\chi}} + \cdots \pm t^{a_{\chi}})
\end{equation*}
where the coefficients of all intermediate powers $t^i$, with $a_{\chi} < i < A_{\chi}$ are integers, and $n_{\chi} > 0$ is an integer. By work of Lusztig \cite{lusztig:1992:a-unipotent-support} and Geck--Malle \cite{geck-malle:2000:existence-of-a-unipotent-support} the invariants $n_{\chi}$, $A_{\chi}$, and $a_{\chi}$, occuring in the degree polynomial have geometric interpretations.
\end{pa}

\begin{pa}
Specifically, building on work of Lusztig, Geck--Malle have shown that to each irreducible character $\chi$ one may associate a unique $F$-stable unipotent class $\mathcal{O}_{\chi} = \mathcal{O}_{\chi}^{\bG}$ of $\bG$ called the \emph{unipotent support} of $\chi$ (this can be done without any assumption on $p$). If $p$ is a good prime for $\bG$ and $u \in \mathcal{O}_{\chi}$ then it is known that we have
\begin{equation*}
A_{\chi} = \dim \mathcal{O}_{\chi^*}/2, \qquad a_{\chi} = \dim \mathfrak{B}_u^{\bG},\qquad\text{and}\qquad n_{\chi} \mid |A_{\bG}(u)|,
\end{equation*}
where $\mathfrak{B}_u^{\bG}$ is the variety of Borel subgroups of $\bG$ containing $u$ and $\chi^* = \pm D_{\bG^F}(\chi) \in \Irr(\bG^F)$ is the Alvis--Curtis dual of $\chi$.
\end{pa}

\begin{pa}
This geometric interpretation explains the appearance of the $\alpha_{\bG}(\bM,F)$-bound occurring in \cref{thm:character-bound} and also the occurrence of the term $B(\bG)$ above. To achieve \ref{pa:bound}(c) one can now try to get a relationship between the unipotent support of $\chi$ and the unipotent support of $\eta$ when $\langle \eta, {}^*R_{\bM}^{\bG}(\chi)\rangle_{\bM^F}$ is non-zero. To describe such a relationship let us write $X \leqslant Y$ if $X,Y \subseteq \mathcal{U}(\bG)$ are subsets of the unipotent variety satisfying $X \subseteq \overline{Y}$ (the Zariski closure of $Y$). In \cref{sec:proof-of-closure} we will prove the following, whose conclusion was shown in \cite{bezrukavnikov-liebeck-shalev-tiep:2017:character-bounds-grps-Lie-type} under the assumptions that $p$ is a good prime for $\bG$ and $\bM$ is $(\bG,F)$-split.
\end{pa}

\begin{thm}\label{thm:main-characters}
Assume $p$ is a good prime for $\bG$, ($\mathcal{R}_{\bG,F}$) holds, and that there exists an integer $n_0 \geqslant 1$ such that every $F$-stable Levi subgroup $\bM \leqslant \bG$ is $(\bG,F^{n_0})$-split and the weak Lusztig conjecture ($\mathcal{W}_{\bG,F^{n_0}}$) holds. Then for any $F$-stable Levi subgroup $\bM \leqslant \bG$ and irreducible characters $\chi \in \Irr(\bG^F)$ and $\eta \in \Irr(\bM^F)$ satisfying $\langle \chi, R_{\bM}^{\bG}(\eta)\rangle_{\bG^F} \neq 0$ we have $\mathcal{O}_{\eta} \leqslant \mathcal{O}_{\chi}$ and $\mathcal{O}_{\eta^*} \leqslant \mathcal{O}_{\chi^*}$.
\end{thm}

\begin{proof}[of \cref{thm:character-bound}]
Assume $\chi \in \Irr(\bG^F)$ and $g \in \bG^F$ are as in the statement of \Cref{thm:character-bound}. We will produce a bound on $|\chi(g)|$ following \cref{pa:bound}. Recall that we have already seen that we may take $f_1(\bG) = |W_{\bG}|^2$, in \cref{pa:bound_RLG}, and $f_2(\bG)^2 = B(\bG)^4\cdot |W_{\bG}|$, in \cref{pa:bound-inner}. We now find $f_3(\bG)$. If $\eta\in \Irr(\bM^F \mid {}^*R_{\bM}^{\bG}(\chi))$ then after \cref{thm:main-characters} we get the following numerical relationship between the degree polynomials of $\eta$ and $\chi$:
\begin{equation*}
A_{\eta} = \dim \mathcal{O}_{\eta^*}^{\bM}/2 \leqslant \dim \mathcal{O}_{\chi^*}^{\bG}/2 = A_{\chi}.
\end{equation*}
With this relationship in hand an identical argument to that used in the proof of \cite[Thm.~1.1]{bezrukavnikov-liebeck-shalev-tiep:2017:character-bounds-grps-Lie-type} shows that we may take
\begin{equation*}
f_3(\bG) = 3^{D(\bG)/2} \cdot B(\bG).
\end{equation*}

Putting things together we see that we may choose the integer $f(\bG)$, defined as in \cref{pa:bound}, to be
\begin{equation*}
f(\bG) = 3^{D(\bG)/2} \cdot B(\bG)^3 \cdot |W_{\bG}|^{\frac{5}{2}}.
\end{equation*}
Let $\bG_{\simc}$ be the simply connected cover of the derived subgroup $[\bG,\bG]$ of $\bG$. We have $Z(\bG) = Z([\bG,\bG])Z^{\circ}(\bG)$, so certainly $|Z(\bG)/Z^{\circ}(\bG)| \leqslant |Z([\bG,\bG])| \leqslant |Z(\bG_{\simc})|$. Thus, we have $B(\bG) \leqslant B(\bG_{\simc})$ so $f(\bG) \leqslant f(\bG_{\simc})$.

We now define a function $f : \mathbb{N} \to \mathbb{N}$ by setting $f(r) = \max_{\bG} f(\bG)$ where the maximum is taken over all semisimple and simply connected groups of rank $r$. We note that there are finitely many such groups for a given $r$ so the maximum exists. That this function satisfies the conclusion of \cref{thm:character-bound} is clear.
\end{proof}
%

\begin{pa}
As indicated by our assumptions our proof of \cref{thm:main-characters} is not independent of that given in \cite{bezrukavnikov-liebeck-shalev-tiep:2017:character-bounds-grps-Lie-type} and crucially uses the split case to deal with the non-split case. Indeed, it follows from work of Lusztig \cite{lusztig:1992:a-unipotent-support} and the first author \cite{taylor:2016:GGGRs-small-characteristics} that, if $p$ is a good prime, then to each character sheaf $A$ one may associate a well-defined unipotent class $\mathcal{O}_A \subseteq \mathcal{U}(\bG)$ of $\bG$ which is also called its unipotent support. By relating parabolic induction of character sheaves to Harish-Chandra induction, and using the results in \cite{bezrukavnikov-liebeck-shalev-tiep:2017:character-bounds-grps-Lie-type}, we are able to establish the following result,
whose proof is given in \cref{sec:proof-of-closure}.
\end{pa}

\begin{thm}\label{thm:main-char-sheaves}
Assume $p$ is a good prime for $\bG$. Let $\bM \leqslant \bG$ be a Levi subgroup and let $A \in \CS(\bG)$ and $B \in \CS(\bM)$ be character sheaves satisfying $A \mid \ind_{\bM}^{\bG}(B)$. If there exists a Frobenius endomorphism $F_1 : \bG \to \bG$ such that: $\bM$ is $(\bG,F_1)$-split, $A$ and $B$ are $F_1$-stable, and the weak Lusztig conjecture ($\mathcal{W}_{\bG,F_1}$) holds, then $\mathcal{O}_B \leqslant \mathcal{O}_A$.
\end{thm}

\section{\texorpdfstring{Functions on Projective $G$-sets}{Functions on Projective G-sets}}\label{sec:proj-G-sets}
\begin{pa}\label{pa:G-set-funcs}
We fix an algebraic closure $\Ql$, where $\ell \neq p$ is a prime, and an involutive automorphism $\overline{\phantom{\zeta}} : \Ql \to \Ql$ which satisfies $\overline{\zeta} = \zeta^{-1}$ for any root of unity $\zeta \in \Ql^{\times}$. Let $G$ be a finite group and $X$ a finite (left) $G$-set with action map $\cdot : G \times X \to X$. A function $c : G \times X \to \Ql^{\times}$ is called a \emph{$2$-cocycle} if
\begin{equation}\label{eq:cocyle-cond}
c(gh,x) = c(g,h\cdot x)c(h,x)
\end{equation}
for all $g,h \in G$ and $x \in X$. The pair $(X,c)$ is called a \emph{projective $G$-set}. We say $(X,c)$ is \emph{unital} if $c(g,x) \in \Ql^{\times}$ is a root of unity for all $g \in G$ and $x \in X$. If $H \leqslant G$ is a subgroup and $(Y,d)$ is a projective $H$-set then a function $\psi : Y \to X$ is a \emph{projective $H$-map} if $\psi(h\cdot y) = h\cdot\psi(y)$ and $d(h,y) = c(h,\psi(y))$ for all $h \in H$ and $y \in Y$.
\end{pa}

\begin{pa}\label{pa:G-space-funcs}
If $(X,c)$ is a projective $G$-set then we denote by $\Fun_G(X,c)$ the $\Ql$-vector space of functions $f : X \to \Ql$ satisfying $f(x) = c(g,x)f(g\cdot x)$ for all $g \in G$ and $x \in X$. Note that \cref{eq:cocyle-cond} ensures that $f(g\cdot (h\cdot x)) = f(gh\cdot x)$. The space $\Fun_G(X,c)$ has a natural $\Ql$-valued form $\langle -,-\rangle_X$ defined by
\begin{equation*}
\langle f,f'\rangle_X = \frac{1}{|G|}\sum_{x \in X} f(x)\overline{f'(x)}.
\end{equation*}
We say $x$ is \emph{$c$-regular} if $c(g,x) = 1$ for all $g \in \Stab_G(x)$ (the stabiliser of $x$). If $f(x) \neq 0$, for $f \in \Fun_G(X,c)$ and $x \in X$, then we must have $x$ is $c$-regular.
\end{pa}

\begin{pa}\label{pa:orb-funcs}
A straightforward computation using \cref{eq:cocyle-cond} shows that the subset of $c$-regular elements of $X$ is a union of orbits. If $x \in X$ is $c$-regular then we obtain a well-defined function $\pi_x \in \Fun_G(X,c)$ by setting
\begin{equation*}
\pi_x(y) = \begin{cases}
c(g,x)^{-1}|\Stab_G(x)| &\text{if }y = g\cdot x\text{ for some }g \in G,\\
0 &\text{otherwise}.
\end{cases}
\end{equation*}
If $x_1,\dots,x_k \in X$ are representatives for the orbits of $c$-regular elements then $(\pi_{x_1},\dots,\pi_{x_k})$ is a basis of $\Fun_G(X,c)$. Moreover, if $(X,c)$ is unital then $\langle f,\pi_x\rangle_X = f(x)$.
\end{pa}

\begin{pa}\label{pa:ind-map}
Assume now that $H \leqslant G$ is a subgroup, $(Y,d)$ is a projective $H$-set, and $\psi : Y \to X$ is a projective $H$-map. Then we have a restriction map $\psi^* : \Fun_G(X,c) \to \Fun_H(Y,d)$ defined by $\psi^*(f) = f\circ\psi$. We also have an induction map $\psi_* : \Fun_H(Y,d) \to \Fun_G(X,c)$ defined by
\begin{equation*}
\psi_*(f)(x) = \frac{1}{|H|}\sum_{\substack{(s,y) \in G\times Y\\ s \cdot x = \psi(y)}} c(s,x)f(y).
\end{equation*}
The following is elementary and left to the reader.
\end{pa}

\begin{prop}\label{prop:fun-G-sets-bumper}
Assume $G$ is a finite group and $H \leqslant G$ is a subgroup. Moreover, let $(X,c)$ be a unital projective $G$-set and let $(Y,d)$ be a unital projective $H$-set. If $\psi : Y \to X$ is an $H$-map then the following statements hold:
\begin{enumerate}[label=\textnormal{(\roman*)}]
	\item for any $f \in \Fun_H(Y)$ and $f' \in \Fun_G(X)$ we have $\langle \psi_*(f),f'\rangle_X = \langle f,\psi^*(f')\rangle_Y$,
	\item for any $d$-regular element $y \in Y$ we have $\psi_*(\pi_y) = \pi_{\psi(y)}$,
	\item if $K \leqslant H$ is a subgroup and $(Z,e)$ is a unital projective $K$-set then for any projective $K$-map $\lambda : Z \to Y$ we have $(\psi\circ\lambda)^* = \lambda^*\circ\psi^*$ and $(\psi\circ\lambda)_* = \psi_*\circ\lambda_*$.
\end{enumerate}
\end{prop}

%
%

\section{Multiplicities for Coset Induction}\label{sec:coset-multiplicities}
\begin{pa}\label{pa:conj-proj-G-set}
Assume $G$ is a finite group and let $Z^2(G,\Ql)$ be the set of $2$-cocycles $G \times G \to \Ql^{\times}$ with $G$ acting trivially on $\Ql$. Recall that if $\alpha \in Z^2(G,\Ql)$ then the twisted group algebra $\Ql[G]_{\alpha}$ is a $\Ql$-algebra with basis $(\Theta_g \mid g \in G)$ satisfying $\Theta_g\Theta_h = \alpha(g,h)\Theta_{gh}$. For each $g,x \in G$ there exists $c_{\alpha}(g,x) \in \Ql$ such that $\Theta_g\Theta_x\Theta_g^{-1} = c_{\alpha}(g,x)\Theta_{gxg^{-1}}$. The resulting map $c_{\alpha} : G \times G \to \Ql^{\times}$ makes the pair $(G,c_{\alpha})$ into a projective $G$-set with $G$ acting on itself by conjugation.
\end{pa}

%

\begin{pa}
We will denote by $\Class_{\alpha}(G)$ the space $\Fun_G(G,c_{\alpha})$ defined as in \cref{pa:G-space-funcs}. If $M \in \lmod{\Ql[G]_{\alpha}}$ then $\chi_M : G \to \Ql$, defined by $\chi_M(g) = \Tr(\Theta_g \mid M)$, is the \emph{$\alpha$-character} of $G$ afforded by $M$. The set $\Irr_{\alpha}(G)$ of irreducible $\alpha$-characters, afforded by simple $\Ql[G]_{\alpha}$-modules, is a basis of $\Class_{\alpha}(G)$. If $\alpha \in Z^2(G,\Ql)$ is \emph{unital}, in the sense that $\alpha(g,x) \in \Ql^{\times}$ is a root of unity for all $g,x \in G$, then it is an orthonormal basis. Taking $\alpha(g,x) = 1$ we get the usual space of class functions $\Class(G)$ with basis $\Irr(G)$ given by the irreducible characters.
\end{pa}

\begin{pa}
Let $H \leqslant G$ be a subgroup. Then denoting again by $\alpha$ the restriction $\alpha|_{H\times H} \in Z^2(H,\Ql)$ we have a projective $H$-set $(H,c_{\alpha})$ with $H$ acting on itself by conjugation. Applying \cref{pa:ind-map} to the natural inclusion map $\delta : H \to G$ we get induction $\Ind_H^G = \delta_*$ and restriction maps $\Res_H^G = \delta^*$. The following is a straightforward calculation.
\end{pa}

\begin{lem}\label{lem:char-ind-module}
If $\chi \in \Class_{\alpha}(H)$ is the $\alpha$-character afforded by the $\Ql[H]_{\alpha}$-module $M$ then $\Ind_H^G(\chi)$ is the $\alpha$-character afforded by the $\Ql[G]_{\alpha}$-module $\Ql[G]_{\alpha} \otimes_{\Ql[H]_{\alpha}} M$.
\end{lem}

\begin{pa}
Assume $N \lhd G$ is a normal subgroup. We have a (right) action of $G$ on $\Class_{\alpha}(N)$ given by $f^g(n) = c_{\alpha}(g,n)f({}^gn)$ for all $f \in \Class_{\alpha}(N)$, $g \in G$, and $n \in N$. If $M$ is a $\Ql[N]_{\alpha}$-module and $g \in G$ then we denote by $M_g$ the $\Ql[N]_{\alpha}$-module that is equal to $M$ as a $\Ql$-vector space but where the action is given by
\begin{equation*}
\Theta_n \star m = \Theta_g\Theta_n\Theta_g^{-1}m = c_{\alpha}(g,n)\Theta_{gng^{-1}}m.
\end{equation*}
Clearly $\chi_{M_g} = \chi_M^g$ so the action of $G$ permutes $\Irr_{\alpha}(N)$. In the following discussion we will use that standard results from Clifford Theory hold for this action, see \cite[Props.~1-3]{conlon:1964:twisted-group-algebras}.
\end{pa}

\begin{pa}
Assume now that $\phi \in \Aut(G)$ is an automorphism of $G$. Then we will denote by $G\sd\phi = G \rtimes \langle \phi \rangle$ the semidirect product of $G$ with the cyclic group $\langle \phi \rangle \leqslant \Aut(G)$ defined such that $\phi g\phi^{-1} = \phi(g)$ for any $g \in G$. We denote by $G.\phi$ the coset $\{g\phi \mid g \in G\} \subseteq G\sd\phi$, which is a $G$-set under conjugation. In particular, restricting $c_{\alpha}$ to $G.\phi$ we get a projective $G$-set $(G.\phi,c_{\alpha})$ and we denote by $\Class_{\alpha}(G.\phi)$ the space $\Fun_G(G.\phi,c_{\alpha})$.
\end{pa}

\begin{pa}\label{pa:ortho-basis-coset-funcs}
The natural inclusion map $\delta : G.\phi \to G\sd\phi$ is a projective $G$-map and we obtain, as in \cref{pa:ind-map}, corresponding induction and restriction maps $\Ind_{G.\phi}^{G\sd\phi} = \delta_*$ and $\Res_{G.\phi}^{G\sd\phi} = \delta^*$. Let us denote by $\Irr_{\alpha}(G\sd\phi \downarrow G) \subseteq \Irr_{\alpha}(G\sd\phi)$ those irreducible characters whose restriction to $G$ is irreducible. Then we define the irreducible $\alpha$-characters of the coset $G.\phi$ to be the elements of the set
\begin{equation*}
\Irr_{\alpha}(G.\phi) := \{\Res_{G.\phi}^{G\sd\phi}(\widetilde{\eta}) \mid \widetilde{\eta} \in \Irr_{\alpha}(G\sd\phi \downarrow G)\} \subseteq \Class_{\alpha}(G.\phi).
\end{equation*}
If $\alpha$ is unital then one gets an orthonormal basis for $\Class_{\alpha}(G.\phi)$ by picking for each $\eta \in \Irr_{\alpha}(G)^{\phi}$ an extension $\widetilde{\eta} \in \Irr_{\alpha}(G\sd\phi)$ and taking $\Res_G^{G\sd\phi}(\widetilde{\eta})$.
\end{pa}

\begin{rem}
To avoid cumbersome notation we will identify any irreducible character $\widetilde{\eta} \in \Irr_{\alpha}(G\sd\phi \downarrow G)$ with its restriction $\Res_{G.\phi}^{G\sd\phi}(\widetilde{\eta})$.
\end{rem}

\begin{pa}
Assume $H \leqslant G$ is a subgroup and $g \in G$ is an element such that $\boldsymbol{\iota}_g\phi(H) = H$. Then we have $\boldsymbol{\iota}_g\phi \in \Aut(H)$. We will denote by $H\sd g\phi$ the group $H\sd\boldsymbol{\iota}_g\phi$ and by $H.g\phi$ the coset $H.\boldsymbol{\iota}_g\phi$. Note that we have a surjective homomorphism $\gamma_g : G\sd g\phi \to G\sd \phi$, defined by $(x,(\boldsymbol{\iota}_g\phi)^i) \mapsto (xg\phi(g)\cdots\phi^{i-1}(g),\phi^i)$, which restricts to a bijective $G$-map $G.g\phi \to G.\phi$. If $\alpha \in Z^2(G\sd g\phi,\Ql)$ denotes the $2$-cocycle defined by $\alpha(x,y) = \alpha(\gamma_g(x),\gamma_g(y))$ then $\gamma_g$ is a projective $G$-map and $(\gamma_g)^* : \Class_{\alpha}(G.\phi) \to \Class_{\alpha}(G.g\phi)$ is an isometry. Moreover, we have
\begin{equation*}
(\gamma_g)^*(\Irr_{\alpha}(G\sd\phi\downarrow G)) \subseteq \Irr_{\alpha}(G\sd g\phi\downarrow G) \qquad\text{and} \qquad (\gamma_g)^*(\Irr_{\alpha}(G.\phi)) \subseteq \Irr_{\alpha}(G.g\phi).
\end{equation*}
The restriction of $\gamma_g$ defines an injective $H$-map $\gamma_g : H.g\phi \to G.\phi$. As in \cref{pa:G-set-funcs}, we set $\Ind_{H.g\phi}^{G.\phi} = (\gamma_g)_*$ and $\Res_{H.g\phi}^{G.\phi} = (\gamma_g)^*$. Whilst elementary the following observations concerning induction will form an important ingredient later on.
\end{pa}

\begin{lem}\label{lem:coset-ind-reg-ind}
Assume $\alpha \in Z^2(G\sd\phi,\Ql)$ is unital and $\widetilde{\chi} \in \Irr_{\alpha}(H\sd g\phi \downarrow H)$ and $\widetilde{\rho} \in \Irr_{\alpha}(G\sd\phi\downarrow G)$ have irreducible restrictions $\chi = \Res_H^{H\sd g\phi}(\widetilde{\chi}) \in \Irr_{\alpha}(H)$ and $\rho = \Res_G^{G\sd\phi}(\widetilde{\rho}) \in \Irr_{\alpha}(G)$. Then
\begin{equation*}
|\langle \widetilde{\rho},\Ind_{H.g\phi}^{G.\phi}(\widetilde{\chi})\rangle_{G.\phi}| \leqslant \langle \rho,\Ind_H^G(\chi) \rangle_G.
\end{equation*}
In particular, if $\langle \widetilde{\rho},\Ind_{H.g\phi}^{G.\phi}(\widetilde{\chi})\rangle_{G.\phi} \neq 0$ then $\langle \rho,\Ind_H^G(\chi)\rangle_G \neq 0$.
\end{lem}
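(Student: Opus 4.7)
The plan is to unfold the coset inner product via Frobenius reciprocity into a sum of matrix-trace products, and then use matrix coefficient orthogonality together with a Schur-type argument to isolate a quantity bounded by $m := \langle \rho, \Ind_H^G(\chi)\rangle_G$.

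First, applying Frobenius reciprocity for the injective $H$-map $\gamma_g \colon H.g\phi \hookrightarrow G.\phi$ set up in the preceding paragraphs yields
\begin{equation*}
\langle \widetilde{\rho}, \Ind_{H.g\phi}^{G.\phi}(\widetilde{\chi})\rangle_{G.\phi} \;=\; \langle \Res_{H.g\phi}^{G.\phi}(\widetilde{\rho}), \widetilde{\chi}\rangle_{H.g\phi} \;=\; \frac{1}{|H|}\sum_{h\in H}\overline{\widetilde{\rho}(hg\phi)}\,\widetilde{\chi}(h\boldsymbol{\iota}_g\phi).
\end{equation*}
Let $\sigma, \pi$ be unitary matrix realizations of $\chi, \rho$ on spaces $V, W$ afforded by unitary extensions $\widetilde{\sigma}, \widetilde{\pi}$, and set $M := \widetilde{\sigma}(\boldsymbol{\iota}_g\phi)$ and $N' := \pi(g)\widetilde{\pi}(\phi)$. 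Both $M$ and $N'$ are unitary and satisfy the intertwining relations $M\sigma(h) = \sigma(\boldsymbol{\iota}_g\phi(h))M$ and $N'\pi(h) = \pi(\boldsymbol{\iota}_g\phi(h))N'$ for every $h \in H$; the sum above becomes $\frac{1}{|H|}\sum_h \overline{\operatorname{tr}(\pi(h)N')}\,\operatorname{tr}(\sigma(h)M)$.

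The heart of the argument is the decomposition $W = W_\sigma \oplus W_\sigma^{\perp}$ into the $\sigma$-isotypic component and its complement, both preserved by $N'$ since $\sigma$ is $\boldsymbol{\iota}_g\phi$-stable (witnessed by $M$). Identify $W_\sigma \cong V \otimes U$ with $\dim U = m$ and $\pi|_{W_\sigma} = \sigma \otimes 1_U$. Both $N'|_{W_\sigma}$ and $M \otimes 1_U$ intertwine $(\sigma\circ\boldsymbol{\iota}_g\phi^{-1}) \otimes 1_U$ with $\sigma \otimes 1_U$, so Schur's lemma forces $N'|_{W_\sigma} = M \otimes T$ for a unique $T \in \End(U)$, and unitarity of $N'|_{W_\sigma}$ and $M$ then forces $T$ to be unitary. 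By matrix coefficient orthogonality the contribution from $W_\sigma^{\perp}$ vanishes, and the remaining $W_\sigma$-contribution collapses to
\begin{equation*}
\overline{\operatorname{tr}(T)} \cdot \frac{1}{|H|}\sum_{h\in H}|\operatorname{tr}(\sigma(h)M)|^2 \;=\; \overline{\operatorname{tr}(T)} \cdot \langle \widetilde{\chi}, \widetilde{\chi}\rangle_{H.\boldsymbol{\iota}_g\phi} \;=\; \overline{\operatorname{tr}(T)},
\end{equation*}
where the final equality uses \cref{pa:ortho-basis-coset-funcs}, namely that restrictions of irreducible extensions have unit norm on the coset. Since $T$ is a unitary $m\times m$ matrix its eigenvalues lie on the unit circle, so $|\operatorname{tr}(T)|\leq m$; this gives the claimed inequality, and the ``in particular'' clause follows at once.

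The main obstacle is the Schur-type identification $N'|_{W_\sigma} = M \otimes T$ together with the verification that $T$ is unitary; once this structural step is established the matrix coefficient orthogonalities and the coset norm identity complete the argument mechanically.
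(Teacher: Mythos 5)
Your argument is correct, and it takes a genuinely different route from the paper's. The paper proceeds by Clifford theory: it transports both inner products through the isometry $\gamma_g^*$ and transitivity of coset/ordinary induction, invokes a formula of Bonnaf\'e (cited as \cite[1.3]{bonnafe:2006:sln}) to write $\langle \widetilde{\rho},\Ind_{H.g\phi}^{G.\phi}(\widetilde{\chi})\rangle$ as $\sum_{\lambda}\lambda(g\phi)\, c_\lambda$ over the linear characters $\lambda$ of the cyclic quotient $H\sd g\phi/H$, and Clifford's theorem to express $\langle \rho,\Ind_H^G(\chi)\rangle$ as $\sum_{\lambda}\lambda(1)\, c_\lambda$ with the same nonnegative integers $c_\lambda$; the bound follows from $|\lambda(g\phi)|=\lambda(1)=1$. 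You instead argue at the level of the actual modules: the coset inner product is $\overline{\operatorname{tr}(T)}$ for the $m\times m$ unitary matrix $T$ obtained from Schur's lemma applied to the intertwiner $N'$ on the $\sigma$-isotypic component $W_\sigma \cong V\otimes U$, and the bound is just $|\operatorname{tr}(T)|\leq \dim U = m$. The two proofs encode the same structure --- your matrix $T$ is precisely the action of $\boldsymbol{\iota}_g\phi$ on the multiplicity space, whose eigenvalues are the $\lambda(g\phi)$ weighted with multiplicities $c_\lambda$ --- but your argument is self-contained (no Clifford theory or external reference needed), at the cost of some setup with explicit unitary realizations (which one should note are available here because irreducible characters of finite groups are realizable over cyclotomic fields, where the fixed involution $\overline{\phantom{\zeta}}$ on $\Ql$ acts as complex conjugation). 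Your use of orthogonality to kill the contribution from $W_\sigma^\perp$, and the observation that $W_\sigma$ is $N'$-stable because $\chi$ is $\boldsymbol{\iota}_g\phi$-stable, are both correct and carefully stated.
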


\begin{proof}
Let $\widetilde{\psi} = (\gamma_g)^*(\widetilde{\rho}) \in \Irr_{\alpha}(G\sd g\phi)$. Then as $(\gamma_g)^* : \Class_{\alpha}(G.g\phi) \to \Class_{\alpha}(G.\phi)$ is an isometry and $(\gamma_g)^*\circ\Ind_{H.g\phi}^{G.\phi} = \Ind_{H.g\phi}^{G.g\phi}$ we have
\begin{equation*}
\langle \widetilde{\rho},\Ind_{H.g\phi}^{G.\phi}(\widetilde{\chi})\rangle_{G.\phi} = \langle \widetilde{\psi},\Ind_{H.g\phi}^{G.g\phi}(\widetilde{\chi})\rangle_{G.g\phi}.
\end{equation*}
As we identify $\widetilde{\psi}$ with its restriction $\Res_{G.g\phi}^{G\sd g\phi}(\widetilde{\psi})$ we have by Frobenius reciprocity and transitivity of induction, see \cref{prop:fun-G-sets-bumper}, that
\begin{equation*}
\langle \widetilde{\psi},\Ind_{H.g\phi}^{G.g\phi}(\widetilde{\chi})\rangle_{G.g\phi} = \langle \widetilde{\psi}, \Ind_{H\sd g\phi}^{G\sd g\phi}(\Ind_{H.g\phi}^{H\sd g\phi}(\widetilde{\chi}))\rangle_{G\sd g\phi}.
\end{equation*}
The same argument used in \cite[1.3]{bonnafe:2006:sln} shows that $\Ind_{H.g\phi}^{H\sd g\phi}(\widetilde{\chi}) = \sum_{\lambda \in \Irr(H\sd g\phi/H)} \overline{\lambda(g\phi)}(\lambda\otimes\widetilde{\chi})$ so we get that
\begin{equation*}
\langle \widetilde{\rho},\Ind_{H.g\phi}^{G.\phi}(\widetilde{\chi})\rangle_{G.\phi} = \sum_{\lambda \in \Irr(H\sd g\phi/H)} \overline{\lambda(g\phi)}\langle \widetilde{\psi}, \Ind_{H\sd g\phi}^{G\sd g\phi}(\lambda \otimes \widetilde{\chi})\rangle_{G\sd g\phi}.
\end{equation*}

As $\rho = \Res_G^{G\sd g\phi}(\widetilde{\psi})$ an identical argument yields that
\begin{equation*}
\langle \rho,\Ind_H^G(\chi)\rangle_G = \langle \widetilde{\psi},\Ind_H^{G\sd g\phi}(\chi)\rangle_{G\sd g\phi} = \langle \widetilde{\psi},\Ind_{H\sd g\phi}^{G\sd g\phi}(\Ind_H^{H:g\phi}(\chi))\rangle_{G\sd g\phi}.
\end{equation*}
Moreover, a standard consequence of Clifford's Theorem applied to $\Ind_H^{H:g\phi}$ gives us
\begin{equation*}
\langle \rho,\Ind_H^G(\chi)\rangle_G = \sum_{\lambda \in \Irr(H\sd g\phi/H)} \langle \widetilde{\psi},\Ind_{H\sd g\phi}^{G\sd g\phi}(\lambda \otimes \widetilde{\chi}) \rangle_{G\sd g\phi},
\end{equation*}
see \cite[Cor.~6.17]{isaacs:2006:character-theory-of-finite-groups}. Putting things together we get the desired statement as $|\overline{\lambda(g\phi)}| = 1$.
\end{proof}

\begin{cor}\label{lem:triv-bound-inner-prod}
Assume $H_i \leqslant G$ is a subgroup and $g_i \in G$ is an element such that $\boldsymbol{\iota}_{g_i}\phi \in \Aut(H_i)$ for $i \in \{1,2\}$. If $\widetilde{\chi}_i \in \Irr_{\alpha}(H_i\sd g_i\phi \downarrow H_i)$ is an irreducible character then
\begin{equation*}
|\langle \Ind_{H_1.g_1\phi}^{G.\phi}(\widetilde{\chi}_1),\Ind_{H_2.g_2\phi}^{G.\phi}(\widetilde{\chi}_2) \rangle_{G.\phi}| \leqslant |G|.
\end{equation*}
\end{cor}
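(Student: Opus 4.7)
The plan is to expand both induced class functions in the orthonormal basis of $\Class(G.\phi)$ from \cref{pa:ortho-basis-coset-funcs}, bound each coefficient with the help of \cref{lem:coset-ind-reg-ind}, and then reduce to a standard estimate for an ordinary induction inner product. To implement this, I would first fix, for each $\phi$-invariant $\rho \in \Irr(G)^{\phi}$, an extension $\widetilde{\rho} \in \Irr(G\sd\phi \downarrow G)$, so that the $\widetilde{\rho}$ run through an orthonormal basis of $\Class(G.\phi)$. Setting $\chi_i := \Res_{H_i}^{H_i\sd g_i\phi}(\widetilde{\chi}_i) \in \Irr(H_i)$ and writing
\[
a_i(\rho) := \langle \widetilde{\rho},\,\Ind_{H_i.g_i\phi}^{G.\phi}(\widetilde{\chi}_i)\rangle_{G.\phi}, \qquad m_i(\rho) := \langle \rho,\,\Ind_{H_i}^G(\chi_i)\rangle_G,
\]
orthonormality gives $\langle \Ind_{H_1.g_1\phi}^{G.\phi}(\widetilde{\chi}_1),\Ind_{H_2.g_2\phi}^{G.\phi}(\widetilde{\chi}_2)\rangle_{G.\phi} = \sum_{\rho \in \Irr(G)^{\phi}} a_1(\rho)\overline{a_2(\rho)}$, while \cref{lem:coset-ind-reg-ind} supplies the pointwise bound $|a_i(\rho)| \leqslant m_i(\rho)$.

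Applying the triangle inequality and then enlarging the index set from $\Irr(G)^{\phi}$ to $\Irr(G)$, which only inflates the sum since each $m_i(\rho)$ is non-negative, I would collapse the estimate to an ordinary character-theoretic inner product:
\[
|\langle \Ind_{H_1.g_1\phi}^{G.\phi}(\widetilde{\chi}_1),\Ind_{H_2.g_2\phi}^{G.\phi}(\widetilde{\chi}_2)\rangle_{G.\phi}| \leqslant \sum_{\rho \in \Irr(G)} m_1(\rho)m_2(\rho) = \langle \Ind_{H_1}^G(\chi_1),\,\Ind_{H_2}^G(\chi_2)\rangle_G.
\]
To bound this last quantity by $|G|$, I would apply Cauchy--Schwarz to the sequences $(m_i(\rho))_\rho$, reducing the problem to controlling $\|\Ind_{H_i}^G(\chi_i)\|^2$. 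Frobenius reciprocity identifies this norm with $\langle \chi_i,\psi_i\rangle_{H_i}$ for $\psi_i = \Res_{H_i}^G\Ind_{H_i}^G(\chi_i)$, and the trivial fact that the multiplicity of an irreducible in an ordinary character is at most the ratio of degrees yields $\langle \chi_i,\psi_i\rangle_{H_i} \leqslant \psi_i(1)/\chi_i(1) = [G:H_i]$. Combining these gives
\[
|\langle \Ind_{H_1.g_1\phi}^{G.\phi}(\widetilde{\chi}_1),\Ind_{H_2.g_2\phi}^{G.\phi}(\widetilde{\chi}_2)\rangle_{G.\phi}| \leqslant \sqrt{[G:H_1][G:H_2]} = |G|/\sqrt{|H_1|\cdot|H_2|} \leqslant |G|.
\]

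I do not expect any serious obstacle: the genuine content has already been extracted in \cref{lem:coset-ind-reg-ind}, so the corollary is bookkeeping combined with a standard Cauchy--Schwarz estimate. The only mild point of care is that the orthonormal basis of $\Class(G.\phi)$ is indexed only by $\phi$-invariant irreducibles of $G$; enlarging the index set to all of $\Irr(G)$ is legitimate because the extra terms are non-negative, and incidentally suggests that the bound $|G|$ is typically very far from sharp.
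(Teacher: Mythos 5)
Your argument tracks the paper's proof step for step through the orthonormal-basis expansion, the application of Lemma \ref{lem:coset-ind-reg-ind} to bound $|a_i(\rho)|$ by the ordinary multiplicity $m_i(\rho)$, and the enlargement of the index set from $\Irr(G)^{\phi}$ to $\Irr(G)$. The only divergence is the endgame: where you apply Cauchy--Schwarz and the degree estimate $\langle\chi_i,\Res\Ind(\chi_i)\rangle_{H_i}\leqslant[G:H_i]$ to obtain the sharper bound $|G|/\sqrt{|H_1||H_2|}$, the paper instead observes that $\Ind_{H_i}^G(\chi_i)$ is a subcharacter of the regular character (multiplicity of $\eta$ is at most $\eta(1)/\chi_i(1)\leqslant\eta(1)$) and bounds the inner product by $\langle\rho_G,\rho_G\rangle_G=|G|$ directly. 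Both closings are correct and elementary; yours is marginally more precise, the paper's is marginally more compact, and neither difference matters for the application.
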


\begin{proof}
Decomposing in an orthonormal basis of $\Class_{\alpha}(G.\phi)$ we have
\begin{equation*}
\langle \Ind_{H_1.g_1\phi}^{G.\phi}(\widetilde{\chi}_1),\Ind_{H_2.g_2\phi}^{G.\phi}(\widetilde{\chi}_2) \rangle = \sum_{\eta \in \Irr(G)^{\phi}} \langle \widetilde{\eta}, \Ind_{H_1.g_1\phi}^{G.\phi}(\widetilde{\chi}_1)\rangle \cdot \overline{\langle \widetilde{\eta}, \Ind_{H_2.g_2\phi}^{G.\phi}(\widetilde{\chi}_2)\rangle}
\end{equation*}
where $\widetilde{\eta} \in \Irr(G\sd\phi)$ is a fixed extension of $\eta$. If $\chi_i = \Res_{H_i.g_i\phi}^{G.\phi}(\widetilde{\chi}_i)$ is the irreducible restriction of $\widetilde{\chi}_i$ then it follows from \cref{lem:coset-ind-reg-ind} that
\begin{align*}
|\langle \Ind_{H_1.g_1\phi}^{G.\phi}(\widetilde{\chi}_1),\Ind_{H_2.g_2\phi}^{G.\phi}(\widetilde{\chi}_2) \rangle| &\leqslant \sum_{\eta \in \Irr(G)^{\phi}} \langle \eta, \Ind_{H_1}^G(\chi_1)\rangle \cdot \overline{\langle \eta, \Ind_{H_2}^G(\chi_2)\rangle}\\
&\leqslant \langle \Ind_{H_1}^G(\chi_1), \Ind_{H_2}^G(\chi_2)\rangle.
\end{align*}
The statement now follows from the fact that $\Ind_{H_i}^G(\chi_i)$ is necessarily a summand of the character of the regular representation of $\Ql[G]_{\alpha}$.
\end{proof}

\section{Character Sheaves}\label{sec:char-sheaves}
\begin{pa}\label{pa:bound-der-equi-cat}
Assume $\bX$ is a variety equipped with an algebraic action of a connected algebraic group $\bH$. We will denote by $\mathscr{D}_{\bH}(\bX)$ the $\bH$-equivariant bounded derived category of $\Ql$-constructible sheaves on $\bX$, as defined in \cite{bernstein-lunts:1994:equivariant-sheaves}. Let $\bX'$ be another variety equipped with an algebraic action of a connected algebraic group $\bH'$. If $\phi : \bX \to \bX'$ is an equivariant morphism then we obtain (derived) functors $\phi^*,\phi^! : \mathscr{D}_{\bH'}(\bX') \to \mathscr{D}_{\bH}(\bX)$ and $\phi_*,\phi_! : \mathscr{D}_{\bH}(\bX) \to \mathscr{D}_{\bH'}(\bX')$. We will denote by $\mathscr{M}_{\bH}(\bX) \subseteq \mathscr{D}_{\bH}(\bX)$ the full subcategory of $\bH$-equivariant perverse sheaves on $\bX$.
\end{pa}

\begin{pa}
If $\bX = \bH$ in \cref{pa:bound-der-equi-cat} then we will implicitly assume that $\bH$ acts on $\bX$ by conjugation. Recall the pair $(\bG,F)$ fixed in \cref{pa:notation}. In \cite[2.10]{lusztig:1985:character-sheaves} Lusztig has defined the notion of a character sheaf which is a simple object in the category $\mathscr{M}_{\bG}(\bG)$. We will denote by $\CS(\bG) \subseteq \mathscr{M}_{\bG}(\bG)$ the full subcategory whose objects are all finite direct sums of character sheaves. We reserve the term \emph{character sheaf} for a simple object of $\CS(\bG)$.
\end{pa}

\begin{pa}
We will say that a complex $A \in \mathscr{D}_{\bG}(\bG)$ is \emph{$F$-stable} if there exists an isomorphism $\phi : F^*A \to A$. For such an $F$-stable complex $A \in \mathscr{D}_{\bG}(\bG)$ and isomorphism $\phi$ we denote by $\mathcal{X}_{A,\phi} \in \Class(\bG^F)$ the characteristic function of the complex. The map $A \mapsto F^*A$ defines a permutation of the isomorphism classes $\Irr(\mathscr{M}_{\bG}(\bG))$ and $\Irr(\CS(\bG))$ and we denote by $\Irr(\mathscr{M}_{\bG}(\bG))^F \subseteq \Irr(\mathscr{M}_{\bG}(\bG))$ and $\Irr(\CS(\bG))^F \subseteq \Irr(\CS(\bG))$ the corresponding sets of fixed points. The following result of Lusztig is shown under some mild restrictions in \cite[\S25]{lusztig:1985:character-sheaves} and is established in full generality in \cite{lusztig:2012:on-the-cleanness}.
\end{pa}

\begin{thm}[Lusztig]\label{thm:char-funcs-as-basis}
There exists a family of isomorphisms $(\phi_A : F^*A \to A)_{[A] \in \Irr(\CS(\bG))^F}$ such that the set of characteristic functions $\{\mathcal{X}_{A,\phi_A} \mid [A] \in \Irr(\CS(\bG))^F\}$ forms an orthonormal basis of $\Class(\bG^F)$. Each $\phi_A$ is defined uniquely up to multiplication by a root of unity.
\end{thm}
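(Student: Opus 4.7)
The plan is to combine a count with an orthonormality calculation: first show $|\Irr(\CS(\bG))^F| = \dim \Class(\bG^F)$, then produce isomorphisms $(\phi_A)$ such that the associated characteristic functions are pairwise orthonormal. Linear independence and matching cardinalities will then force them to span, giving a basis automatically.

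For the count, I would invoke the Harish-Chandra style classification of character sheaves, to be recalled in \cref{sec:harish-chandra-param-char-sheaves}: every simple object of $\CS(\bG)$ is a summand of $\ind_{\bL \subseteq \bP}^{\bG}(A_0)$ for a cuspidal character sheaf $A_0$ on a Levi subgroup $\bL$, with summands parameterized by irreducibles of a relative Weyl group. Combined with Lusztig's parallel combinatorial classification of the set of $\bG^F$-conjugacy classes (equivalently, $\dim \Class(\bG^F)$), this yields the desired equality of cardinalities.

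For orthonormality, the key input is Lusztig's cleanness theorem \cite{lusztig:2012:on-the-cleanness}: every cuspidal character sheaf is the intermediate extension of an irreducible $\bG$-equivariant local system on a single $\bG$-orbit. Granting cleanness, if $A$ is $F$-stable and cuspidal then $\mathcal{X}_{A,\phi_A}$ is supported on a single $F$-stable stratum, and the Grothendieck--Lefschetz trace formula expresses $\langle \mathcal{X}_{A,\phi_A},\mathcal{X}_{A,\phi_A}\rangle_{\bG^F}$ as an explicit positive scalar times a root of unity determined by $\phi_A$; rescaling $\phi_A$ normalizes this to $1$. For a general $F$-stable simple character sheaf, realized as a summand of a parabolic induction, the inner product reduces, via adjunction and transitivity of parabolic induction, to an analogous sum on $\bL$ to which the cuspidal case applies. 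Orthogonality between non-isomorphic character sheaves, possibly in different Harish-Chandra series, follows from the same reduction together with the disjointness of distinct series.

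Uniqueness of $\phi_A$ up to a root of unity is formal: since $A$ is simple, $\End_{\mathscr{M}_{\bG}(\bG)}(A) = \Ql$, so two isomorphisms $F^*A \to A$ differ by a nonzero scalar which scales $\mathcal{X}_{A,\phi_A}$ by the same factor; the norm condition forces the scalar to have modulus $1$, and integrality of the characteristic function then forces it to be a root of unity. The principal obstacle in this program is the cleanness input in the generality stated, which is precisely what \cite{lusztig:2012:on-the-cleanness} supplies and what was missing from the original treatment in \cite[\S25]{lusztig:1985:character-sheaves}.
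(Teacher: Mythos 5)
The paper does not prove this result: it is quoted directly from Lusztig, with the remark that it is proved under mild hypotheses in \cite[\S25]{lusztig:1985:character-sheaves} and in full generality once cleanness is known from \cite{lusztig:2012:on-the-cleanness}. Your reconstruction of the overall strategy (cleanness as the essential ingredient, Grothendieck--Lefschetz on a single stratum for the cuspidal case, Harish-Chandra induction to bootstrap, and a count against $\dim\Class(\bG^F)$) is a fair summary of Lusztig's route, and you have correctly identified cleanness as the precise point that was missing from the original treatment.

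However, the uniqueness step is not ``formal'' in the way you assert, and as stated it is actually false. If $\phi_A' = \lambda\phi_A$ with $\lambda\in\Ql^\times$, then all that orthonormality gives is $\lambda\overline{\lambda} = 1$, where $\overline{\phantom{\zeta}}$ is the fixed involution of $\Ql$. By Hilbert~90, the solution set of $\lambda\overline{\lambda} = 1$ is $\{\mu/\overline{\mu} : \mu\in\Ql^\times\}$, which is vastly larger than the group of roots of unity, and every such $\lambda$ produces another family whose characteristic functions are orthonormal. So ``the norm condition forces the scalar to have modulus~$1$'' is fine, but the jump to root of unity does not follow; your appeal to ``integrality of the characteristic function'' is not substantiated and cannot rescue the argument, because the values of $\mathcal{X}_{A,\phi_A}$ are only controlled once $\phi_A$ is \emph{already} suitably normalized. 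In Lusztig's treatment the uniqueness up to a root of unity is not a corollary of orthonormality at all; it comes from fixing the Weil structure on a chosen stalk so that, after dividing out the prescribed integer power of $q^{1/2}$ dictated by Deligne purity and cleanness, the remaining eigenvalue is a root of unity. That normalization condition --- not the inner-product condition --- is what pins $\phi_A$ down to a root of unity. The remainder of your outline is reasonable, though ``Lusztig's parallel combinatorial classification of the set of $\bG^F$-conjugacy classes'' should really be the classification of $\Irr(\bG^F)$ (one then uses $|\Irr(\bG^F)| = \dim\Class(\bG^F)$), since conjugacy classes themselves admit no such parallel parameterization.
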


\begin{rem}
If $A \in \mathscr{D}_{\bG}(\bG)$ is an $F$-stable complex then we will often write $\mathcal{X}_A$ instead of $\mathcal{X}_{A,\phi}$ with an isomorphism $\phi : F^*A \to A$ implicitly chosen. If $A \in \CS(\bG)$ is a character sheaf then we will always assume that $\phi$ is chosen to be part of such a family as in \cref{thm:char-funcs-as-basis}.
\end{rem}

\section{Parabolic Induction}\label{sec:parabolic-ind-char-sheaves}
\begin{pa}\label{pa:ind-char-sheaves}
Let $\bP \leqslant \bG$ be a parabolic subgroup with unipotent radical $\bU \leqslant \bP$ and Levi complement $\bL \leqslant \bP$. Associated to this data we have a parabolic induction functor $\ind_{\bL \subseteq \bP}^{\bG} : \mathscr{D}_{\bL}(\bL) \to \mathscr{D}_{\bG}(\bG)$ defined as follows, see \cite[\S4.1]{lusztig:1985:character-sheaves}. First, consider the diagram
\begin{equation*}
\begin{tikzcd}
\bL & \hat{X} \arrow{l}[swap]{\pi}\arrow{r}{\sigma} & \tilde{X} \arrow{r}{\tau} & \bG
\end{tikzcd}
\end{equation*}
where we have
\begin{gather*}
\begin{aligned}
\hat{X} &= \{(g,h) \in \bG \times \bG \mid h^{-1}gh \in \bP\} \quad&\quad \tilde{X} &= \{(g,h\bP) \in \bG \times (\bG/\bP) \mid h^{-1}gh \in \bP\}
\end{aligned}\\
\begin{aligned}
\pi(g,h) &= \bar{\pi}_{\bP}(h^{-1}gh) \quad&\quad \sigma(g,h) &= (g,h\bP) \quad&\quad \tau(g,h\bP) &=g
\end{aligned}
\end{gather*}
and $\bar{\pi}_{\bP} : \bP \to \bL$ is the canonical projection map. Here $\hat{X}$ is a variety where $\bG$ acts on the left via $x\cdot (g,h) = (xgx^{-1},xh)$ and $\bP$ acts on the right by $(g,h)\cdot y = (g,hy)$. Hence, we have an action of $\bG\times\bP^{\mathrm{op}}$ on $\hat{X}$ where $\bP^{\mathrm{op}}$ is the opposite group of $\bP$. Moreover, $\tilde{X}$ is the quotient of $\hat{X}$ by the right $\bP$-action. All the morphisms are equivariant with respect to the stated actions.
\end{pa}

\begin{pa}
The fibres of $\pi$ have dimension $\dim\bG+\dim\bU$ and we set $\tilde{\pi} := \pi^*[\dim\bG+\dim\bU]$. Similarly, the fibres of $\sigma$ have dimension $\dim\bP$ and we set $\tilde{\sigma} := \sigma^*[\dim\bP]$. If $A \in \mathscr{D}_{\bL}(\bL)$ then there exists a canonical complex $D \in \mathscr{D}_{\bG\times\bP^{\mathrm{op}}}(\tilde{X})$ such that $\tilde{\pi}A = \tilde{\sigma}D$. We then define $\ind_{\bL\subseteq\bP}^{\bG}(A) := \tau_!D$. If $f \in \Hom_{\mathscr{D}_{\bL}(\bL)}(A,B)$ is a morphism then we get a morphism $\ind_{\bL\subseteq\bP}^{\bG}(f) \in \Hom_{\mathscr{D}_{\bG}(\bG)}(\ind_{\bL\subseteq\bP}^{\bG}(A),\ind_{\bL\subseteq\bP}^{\bG}(B))$ as follows. We have an induced morphism $\tilde{\pi}f : \tilde{\pi}A \to \tilde{\pi}B$. As $\sigma$ is smooth with connected fibres we have $\widetilde{\sigma}$ is a fully faithful functor so there exists a unique morphism $f'$ such that $\widetilde{\pi}f = \widetilde{\sigma}f'$. We then have $\ind_{\bL\subseteq\bP}^{\bG}(f) = \tau_!f'$. We will need the following concerning induction, which is noted in the proof of \cite[15.2]{lusztig:1985:character-sheaves}. It is a straightforward consequence of the function-sheaf dictionary, see \cite[Thm.~12.1]{kiehl-weissauer:2001:weil-conjectures}.
\end{pa}

\begin{lem}\label{lem:ind-split-Levi}
Assume $\bP$, $\bL$, and $A$ are $F$-stable. Then for any isomorphism $\phi : F^*A \to A$ we have
\begin{equation*}
R_{\bL\subseteq\bP}^{\bG}(\mathcal{X}_{A,\phi}) = \mathcal{X}_{\ind_{\bL\subseteq\bP}^{\bG}(A),\ind_{\bL\subseteq\bP}^{\bG}(\phi)}.
\end{equation*}
\end{lem}

\begin{pa}\label{pa:algebra-auto}
We assume now that $K \in \mathscr{M}_{\bG}(\bG)$ is a semisimple object with endomorphism algebra $\mathcal{A} = \End_{\mathscr{M}_{\bG}(\bG)}(K)$. We then have a functor $\mathfrak{F}_K = \Hom_{\mathscr{M}_{\bG}(\bG)}(-,K) : \mathscr{M}_{\bG}(\bG) \to \lmod{\mathcal{A}}$ as in \cref{sec:decomp-semisimple-obj}. Assume $K$ is $F$-stable, i.e., there exists an isomorphism $\phi : F^*K \to K$. Then we have an algebra automorphism $\sigma : \mathcal{A} \to \mathcal{A}$ given by $\sigma(\theta) = \phi\circ F^*\theta \circ \phi^{-1}$. For any $\mathcal{A}$-module $E \in \lmod{\mathcal{A}}$ we denote by $E_{\sigma}$ the module equal to $E$ as a vector space but with the action defined by $a\cdot e = \sigma^{-1}(a)\cdot e$. The following is a straightforward consequence of \cref{lem:hom-to-end-alg}.
\end{pa}

\begin{lem}\label{lem:iso-correspondence}
For any summand $A \mid K$ we have an isomorphism $\rho : \mathfrak{F}_K(A)_{\sigma} \to \mathfrak{F}_K(F^*A)$ of $\mathcal{A}$-modules defined by $\rho(f) = \phi \circ F^*(f)$. In particular, the assignment $\phi \mapsto \rho^{-1}\circ\mathfrak{F}_K(\phi)$ defines a bijection between the isomorphisms $F^*A \to A$ in $\mathscr{M}_{\bG}(\bG)$ and the $\mathcal{A}$-module isomorphisms $\mathfrak{F}_K(A) \to \mathfrak{F}_K(A)_{\sigma}$.
\end{lem}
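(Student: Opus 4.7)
The plan is to proceed in three steps: first verify that $\rho$ is $\mathcal{A}$-linear, then that it is bijective, and finally use the full faithfulness of $\mathfrak{F}_K$ on summands of $K$ established via \cref{lem:hom-to-end-alg} to deduce the ``in particular'' statement.

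For the linearity, I would recall from \cref{sec:decomp-semisimple-obj} that $\mathfrak{F}_K(A) = \Hom_{\mathscr{M}_{\bG}(\bG)}(A, K)$ carries the left $\mathcal{A}$-module structure given by post-composition, so the twisted action on $\mathfrak{F}_K(A)_\sigma$ reads $a \cdot f = \sigma^{-1}(a) \circ f$, while on $\mathfrak{F}_K(F^*A)$ it is ordinary post-composition. The required identity $\rho(\sigma^{-1}(a) \circ f) = a \circ \rho(f)$ then expands to
\[
\phi \circ F^*(\sigma^{-1}(a)) \circ F^*(f) = a \circ \phi \circ F^*(f),
\]
which follows upon cancelling $F^*(f)$ and rearranging the defining formula $\sigma(\theta) = \phi \circ F^*(\theta) \circ \phi^{-1}$ from \cref{pa:algebra-auto} with $\theta = \sigma^{-1}(a)$. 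For bijectivity, I would observe that $F^*$ is an autoequivalence of $\mathscr{M}_{\bG}(\bG)$, hence induces a bijection $\Hom(A, K) \to \Hom(F^*A, F^*K)$; post-composition with the isomorphism $\phi$ is a bijection $\Hom(F^*A, F^*K) \to \Hom(F^*A, K)$; and $\rho$ is precisely the composite of these two bijections.

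For the final clause, I would invoke \cref{lem:hom-to-end-alg} to conclude that $\mathfrak{F}_K$ is fully faithful on the additive subcategory of summands of $K$. Consequently $\phi' \mapsto \mathfrak{F}_K(\phi')$ is a bijection $\Hom_{\mathscr{M}_{\bG}(\bG)}(F^*A, A) \to \Hom_{\mathcal{A}}(\mathfrak{F}_K(A), \mathfrak{F}_K(F^*A))$, and this restricts to a bijection of isomorphisms since fully faithful functors both preserve and reflect isomorphisms. Post-composing with the $\mathcal{A}$-module isomorphism $\rho^{-1}$ produced above yields the claimed bijection. The main obstacle is really only notational: keeping the directions of $\sigma$ and $\sigma^{-1}$ straight, and in particular recognising that the convention $a \cdot e := \sigma^{-1}(a) e$ on $E_\sigma$ is exactly what makes $\rho$ an ordinary (rather than semilinear) $\mathcal{A}$-module map.
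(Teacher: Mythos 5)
Your proof is correct and fills in exactly the details the paper declares ``straightforward'' from \cref{lem:hom-to-end-alg}: the linearity check via the defining identity $a\circ\phi = \phi\circ F^*(\sigma^{-1}(a))$, the factorisation of $\rho$ through $F^*$ (an equivalence, since the Frobenius is a universal homeomorphism) followed by post-composition with $\phi$, and full faithfulness of $\mathfrak{F}_K$ on summands of $K$ (noting that $F^*A \mid F^*K \cong K$, so both $A$ and $F^*A$ lie in that subcategory). One could phrase the ``cancelling $F^*(f)$'' step more precisely as proving $a\circ\phi=\phi\circ F^*(\sigma^{-1}(a))$ first and then post-composing with $F^*(f)$, but the substance is right and the argument is the intended one.
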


\begin{pa}\label{pa:iso-twisted-module}
Note that as $K$ is $F$-stable we have the assignment $A \mapsto F^*A$ defines a permutation of $\Irr(\mathscr{M}_{\bG}(\bG) \mid K)$, c.f., \cref{pa:notation}, and we denote by $\Irr(\mathscr{M}_{\bG}(\bG) \mid K)^F \subseteq \Irr(\mathscr{M}_{\bG}(\bG) \mid K)$ the set of fixed points under this permutation. If $\phi_A : F^*A \to A$ is an isomorphism in $\mathscr{M}_{\bG}(\bG)$ then we have a corresponding isomorphism denoted by $\sigma_A = \mathfrak{F}_K(\phi_A^{-1})\circ\rho : \mathfrak{F}_K(A)_{\sigma} \to \mathfrak{F}_K(A)$. By definition we have $\sigma_A(f) = \phi\circ F^*(f)\circ \phi_A^{-1}$ for any $f \in \mathfrak{F}_K(A)$. An identical argument to that used in \cite[10.4.2]{lusztig:1985:character-sheaves} yields the following.
\end{pa}

\begin{lem}\label{lem:char-func-semisimple}
Assume $K \in \mathscr{M}_{\bG}(\bG)$ is an $F$-stable semisimple perverse sheaf, as above. Then we have
\begin{equation*}
\mathcal{X}_{K,\phi} = \sum_{A \in \Irr(\mathscr{M}_{\bG}(\bG) \mid K)^F} \Tr(\sigma_A,\mathfrak{F}_K(A))\mathcal{X}_{A,\phi_A}.
\end{equation*}
Moreover, we have the trace $\Tr(\sigma_A,\mathfrak{F}_K(A))$ is non-zero for any $A \in \Irr(\mathscr{M}_{\bG}(\bG) \mid K)^F$ as $\sigma_A$ is an automorphism of the vector space $\mathfrak{F}_K(A)$.
\end{lem}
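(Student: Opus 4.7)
The plan is to follow the strategy of \cite[10.4.2]{lusztig:1985:character-sheaves}: decompose $K$ into its isotypic components using \cref{sec:decomp-semisimple-obj}, track how $\phi$ acts on each component, and exploit multiplicativity of traces on tensor products.

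First I would invoke the canonical semisimple decomposition
\[
K \cong \bigoplus_{[A] \in \Irr(\mathscr{M}_{\bG}(\bG) \mid K)} A \otimes \mathfrak{F}_K(A),
\]
where $\mathfrak{F}_K(A) = \Hom_{\mathscr{M}_{\bG}(\bG)}(A,K)$ is regarded as a multiplicity vector space (so $A \otimes V$ denotes a direct sum of $\dim V$ copies of $A$) and the isomorphism is realized by the evaluation map. Applying $F^*$ and then $\phi$ permutes these isotypic components according to the involution $[A] \mapsto F^*[A]$. At an $F$-fixed point $g \in \bG^F$ the non-fixed isotypic components contribute zero to the stalkwise trace, by the standard principle that the trace of a permutation matrix vanishes off the diagonal. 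Therefore
\[
\mathcal{X}_{K,\phi}(g) = \sum_{[A] \in \Irr(\mathscr{M}_{\bG}(\bG) \mid K)^F} \sum_i (-1)^i \Tr\bigl(\phi|_{A \otimes \mathfrak{F}_K(A)},\, \mathscr{H}^i_g(A \otimes \mathfrak{F}_K(A))\bigr).
\]

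For each $F$-stable $[A]$, Schur's lemma makes $\Hom_{\mathscr{M}_{\bG}(\bG)}(F^*A, A)$ one-dimensional, so once $\phi_A$ is fixed the restriction $\phi|_{A \otimes \mathfrak{F}_K(A)}$ decomposes uniquely as $\phi_A \otimes \widetilde{\sigma}_A$ for a unique $\widetilde{\sigma}_A \in \End(\mathfrak{F}_K(A))$. Multiplicativity of traces on tensor products then yields
\[
\sum_i (-1)^i \Tr\bigl(\phi|_{A \otimes \mathfrak{F}_K(A)},\, \mathscr{H}^i_g(A \otimes \mathfrak{F}_K(A))\bigr) = \Tr(\widetilde{\sigma}_A, \mathfrak{F}_K(A)) \cdot \mathcal{X}_{A,\phi_A}(g),
\]
and summing over $[A]$ produces the claimed formula, provided one can match $\widetilde{\sigma}_A$ with the operator $\sigma_A$ of \cref{pa:iso-twisted-module}.

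This last identification is a diagram chase. Under the evaluation isomorphism, the morphism $A \otimes \mathfrak{F}_K(A) \to K$ acts on the copy of $A$ labelled by $f \in \mathfrak{F}_K(A)$ as $f$ itself, so the equation $\phi \circ F^*(f) = \widetilde{\sigma}_A(f) \circ \phi_A$ must hold for every $f$, which rearranges to $\widetilde{\sigma}_A(f) = \phi \circ F^*(f) \circ \phi_A^{-1}$, exactly the definition of $\sigma_A$. The closing non-vanishing assertion then follows from \cref{lem:iso-correspondence}, which supplies $\sigma_A$ as a genuine isomorphism. The main obstacle is the bookkeeping needed to align the abstract $\widetilde{\sigma}_A$ coming from the tensor decomposition with the concretely defined $\sigma_A$; once conventions for the contravariance of $\mathfrak{F}_K$ and the $F^*$-twist are pinned down, the rest reduces to the standard principle that traces decompose on tensor products and vanish off the diagonal of a permutation of summands.
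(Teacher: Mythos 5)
Your argument is correct and is essentially the same one the paper intends: the paper proves \cref{lem:char-func-semisimple} by citing ``an identical argument to that used in \cite[10.4.2]{lusztig:1985:character-sheaves}'', and your write-up is precisely the standard unwinding of that argument — decompose $K$ into isotypic blocks $A \otimes \mathfrak{F}_K(A)$, observe that $\phi$ permutes the blocks so only $F$-fixed isotypic types survive in the stalkwise trace, factor $\phi$ on each surviving block as $\phi_A \otimes \sigma_A$ via Schur's lemma, and apply multiplicativity of trace on tensor products. Your diagram chase identifying $\widetilde{\sigma}_A$ with the $\sigma_A$ of \cref{pa:iso-twisted-module} (namely $\sigma_A(f) = \phi \circ F^*(f) \circ \phi_A^{-1}$) is exactly right, and the non-vanishing assertion does indeed follow from \cref{lem:iso-correspondence} since $\sigma_A$ is invertible.
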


\section{Inducing Cuspidal Character Sheaves}\label{sec:ind-cusp-char-sheaves}

\begin{pa}\label{pa:cusp-pairs}
We denote by $\Cusp(\bG)$ the set of triples $(\bL,\Sigma,[\mathscr{E}])$ where: $\bL \leqslant \bG$ is a Levi subgroup, $\Sigma \subseteq \bL$ is the inverse image of an isolated conjugacy class under the natural projection map $\bL \to \bL/Z^{\circ}(\bL)$, and $[\mathscr{E}]$ is the isomorphism class of an irreducible $\bL$-equivariant local system $\mathscr{E}$ on $\Sigma$ such that
\begin{equation*}
\mathscr{E}^{\sharp} := \IC(\overline{\Sigma},\mathscr{E})[\dim\Sigma] \in \CS(\bL)
\end{equation*}
is a cuspidal character sheaf. For brevity we will write $(\bL,\Sigma,\mathscr{E})$ for the tuple $(\bL,\Sigma,[\mathscr{E}])$ with it implicitly assumed that $\mathscr{E}$ is taken up to isomorphism. Here we use the notation of Lusztig \cite[1.4]{lusztig:1990:green-functions-and-character-sheaves}, except we have shifted the intersection cohomology complex to make it perverse.
\end{pa}

\begin{rem}
We note that if $\bM \leqslant \bG$ is a Levi subgroup of $\bG$ then we have a natural inclusion $\Cusp(\bM) \subseteq \Cusp(\bG)$ of cuspidal triples.
\end{rem}

\begin{pa}\label{pa:cuspidal-induce}
To each tuple $(\bL,\Sigma,\mathscr{E}) \in \Cusp(\bG)$ we associate a perverse sheaf $K_{\bL,\Sigma,\mathscr{E}}^{\bG} \in \mathscr{M}_{\bG}(\bG)$ as follows. Let $\Sigma_{\reg} = \{g \in \Sigma \mid C_{\bG}^{\circ}(g_{\sss}) \leqslant \bL \}$, where $g_{\sss}$ denotes the semisimple part of $g$, and set $Y = \bigcup_{g \in \bG} g\Sigma_{\reg}g^{-1}$. Then we have a diagram
\begin{equation*}
\begin{tikzcd}
\bL & \hat{Y} \arrow{l}[swap]{\alpha}\arrow{r}{\beta} & \tilde{Y} \arrow{r}{\gamma} & Y
\end{tikzcd}
\end{equation*}
where
\begin{gather*}
\begin{aligned}
\hat{Y} &= \{(g,h) \in \bG \times \bG \mid h^{-1}gh \in \Sigma\}, \quad&\quad \tilde{Y} &= \{(g,h\bL) \in \bG \times (\bG/\bL) \mid h^{-1}gh \in \bL\},
\end{aligned}\\
\begin{aligned}
\alpha(g,h) &= h^{-1}gh, \quad&\quad \beta(g,h) &= (g,h\bL), \quad&\quad \gamma(g,h\bL) &=g.
\end{aligned}
\end{gather*}
As for parabolic induction we have $\hat{Y}$ is a variety where $\bG$ acts on the left via $x\cdot(g,h) = (xgx^{-1},xh)$ and $\bL$ acts on the right via $(g,h)\cdot l = (g,hl)$. We have $\tilde{Y}$ is the quotient of $Y$ by the right $\bL$-action. Now, there exists a unique $\bG$-equivariant local system $\tilde{\mathscr{E}}$ on $\tilde{Y}$ such that $\alpha^*\mathscr{E} = \beta^*\tilde{\mathscr{E}}$. The $\bG$-equivariant local system $\gamma_*\tilde{\mathscr{E}}$ is semisimple, see \cite[Prop.~3.5]{lusztig:1984:intersection-cohomology-complexes}, and we set $K_{\bL,\Sigma,\mathscr{E}}^{\bG} = \IC(\overline{Y},\gamma_*\tilde{\mathscr{E}})[\dim Y]$ viewed as a perverse sheaf on $\bG$ via extension by $0$.
\end{pa}

\begin{thm}[{}{Lusztig, \cite[II, 4.3.2, 8.2.3]{lusztig:1985:character-sheaves}}]\label{thm:Lusztig-iso}
The perverse sheaves $K_{\bL,\Sigma,\mathscr{E}}^{\bG}$ and $\ind_{\bL\subseteq\bP}^{\bG}(\mathscr{E}^{\sharp})$ are semisimple and canonically isomorphic. Moreover, all their simple summands are character sheaves.
\end{thm}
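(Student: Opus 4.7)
The plan is to identify the two perverse sheaves first on the $\bG$-stable open subvariety $Y \subseteq \bG$ of ``regular'' points and then extend across $\overline{Y}\setminus Y$ (and control the complement $\bG\setminus\overline{Y}$) using the cuspidality of $\mathscr{E}^{\sharp}$. Semisimplicity on each side is established separately: for $\ind_{\bL\subseteq\bP}^{\bG}(\mathscr{E}^{\sharp})$, I would note that the projection $\tau:\tilde{X}\to\bG$ is proper (since $\bG/\bP$ is a proper variety) while $\sigma$ is smooth with connected fibres, so by descent $D$ is, up to shift, a simple perverse sheaf of geometric origin; the Beilinson--Bernstein--Deligne--Gabber decomposition theorem then gives that $\tau_!D=\tau_*D$ is semisimple. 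For $K_{\bL,\Sigma,\mathscr{E}}^{\bG}$, semisimplicity follows from the quoted \cite[Proposition 3.5]{lusztig:1984:intersection-cohomology-complexes} (which yields that $\gamma_*\tilde{\mathscr{E}}$ is semisimple) together with the fact that the IC extension of a semisimple local system on a smooth open subvariety is a semisimple perverse sheaf.

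Next I would build the canonical isomorphism over $Y$. Let $\tilde{X}^Y=\tau^{-1}(Y)$ and $\hat{X}^Y=\sigma^{-1}(\tilde{X}^Y)$. For $g\in Y$ the hypothesis $h^{-1}gh\in\bP$ combined with the regularity condition $C_{\bG}^{\circ}(g_{\sss})\leqslant {}^h\bL$ forces $h^{-1}gh\in\Sigma_{\reg}$, so the restriction of $\pi$ to $\hat{X}^Y$ factors through $\Sigma_{\reg}\subseteq\Sigma$ and the diagram for $\ind$ becomes, on this open piece, precisely the diagram $\bL\xleftarrow{\alpha}\hat{Y}\xrightarrow{\beta}\tilde{Y}\xrightarrow{\gamma}Y$ of \cref{pa:cuspidal-induce} up to an affine fibration by $\bP/\bL\cong\bU$. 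Since $\mathscr{E}^{\sharp}|_{\Sigma}=\mathscr{E}[\dim\Sigma]$ and the shifts prescribed by $\tilde{\pi}$, $\tilde{\sigma}$, and $\tau_!$ combine to a total shift of $\dim Y$, a direct computation produces a canonical isomorphism between the restriction of $\ind_{\bL\subseteq\bP}^{\bG}(\mathscr{E}^{\sharp})$ to $Y$ and $\gamma_*\tilde{\mathscr{E}}[\dim Y]$, which is the restriction of $K_{\bL,\Sigma,\mathscr{E}}^{\bG}$ to $Y$.

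The delicate step is to upgrade this isomorphism on $Y$ to an isomorphism of perverse sheaves on all of $\bG$. Here I would exploit cuspidality of $\mathscr{E}^{\sharp}$, together with the cleanness result proved in full generality in \cite{lusztig:2012:on-the-cleanness}. Cleanness implies that $\mathscr{E}^{\sharp}$ is the $!$-extension of $\mathscr{E}[\dim\Sigma]$ from $\Sigma$ to $\overline{\Sigma}$, and via proper base change along $\tau$ this translates into the statement that the stalks of $\tau_!D$ vanish outside $\overline{Y}$ and that, on $\overline{Y}$, the complex is the middle extension of its restriction to $Y$. Combined with the previous step, this identifies $\ind_{\bL\subseteq\bP}^{\bG}(\mathscr{E}^{\sharp})$ with $\IC(\overline{Y},\gamma_*\tilde{\mathscr{E}})[\dim Y]=K_{\bL,\Sigma,\mathscr{E}}^{\bG}$.

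The main obstacle is precisely this third step: ruling out extra simple summands of $\tau_!D$ supported on $\overline{Y}\setminus Y$ or on $\bG\setminus\overline{Y}$. Without cuspidality one would generically expect contributions from smaller strata, and showing these vanish requires the vanishing axiom defining a cuspidal pair together with cleanness. Once the isomorphism with $K_{\bL,\Sigma,\mathscr{E}}^{\bG}$ is established, the assertion that every simple summand is a character sheaf is immediate from the definition of $\CS(\bG)$ via \cite[2.10]{lusztig:1985:character-sheaves}, since by construction each such summand is a simple $\bG$-equivariant perverse constituent of a complex induced from a cuspidal pair.
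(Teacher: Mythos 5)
The paper does not prove this statement; it is quoted directly from Lusztig \cite[II, 4.3.2, 8.2.3]{lusztig:1985:character-sheaves}. Your sketch is a reasonable reconstruction of Lusztig's strategy (restrict to the open piece $Y$, then extend), but several of the hard steps are asserted rather than addressed, and one claim is actually circular.

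First, the semisimplicity argument is incomplete. The decomposition theorem only gives that $\tau_*D$ splits as $\bigoplus_i {}^pH^i(\tau_*D)[-i]$ with each perverse cohomology semisimple; to conclude that $\ind_{\bL\subseteq\bP}^{\bG}(\mathscr{E}^{\sharp})$ is a semisimple \emph{perverse} sheaf, one must kill all ${}^pH^i$ with $i \neq 0$. This is exactly where cuspidality enters, through Lusztig's dimension estimates showing that $\tau$ restricted to the support of $D$ is semi-small, and you do not address it. Second, the ``extend across $\overline{Y}\setminus Y$'' step is the heart of the matter and is elided. Cleanness tells you that $\mathscr{E}^{\sharp}$ has no stalks off $\Sigma$, but this does not by itself bound $\tau(\supp D)$ by $\overline{Y}$ (that requires a concrete geometric argument that $\bigcup_h h\bU\Sigma h^{-1} \subseteq \overline{Y}$), nor does it show that $\tau_!D|_{\overline{Y}}$ satisfies the support/cosupport conditions defining a middle extension — again the key input is the semi-smallness estimate from \cite[\S 4]{lusztig:1984:intersection-cohomology-complexes}, not cleanness. (Historically Lusztig proved this isomorphism in 1984, nearly thirty years before the general cleanness theorem, precisely because cleanness is not what makes the argument go.) Third, and most seriously, the final claim that the summands are character sheaves ``immediately from the definition'' is circular: the definition in \cite[2.10]{lusztig:1985:character-sheaves} is in terms of perverse constituents of the complexes $K_w^{\mathcal{L}}$ built from Kummer local systems on a maximal torus, \emph{not} in terms of induction from arbitrary cuspidal pairs. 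Identifying the summands of $K_{\bL,\Sigma,\mathscr{E}}^{\bG}$ as character sheaves in that sense is itself a nontrivial theorem of Lusztig and is part of what the citation is invoking.
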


\begin{pa}\label{pa:induced-complex}
We have a (left) action of $\bG$ on $\Cusp(\bG)$ defined by
\begin{equation*}
g\cdot (\bL,\Sigma,\mathscr{E}) = (\boldsymbol{\iota}_g(\bL),\boldsymbol{\iota}_g(\Sigma),(\boldsymbol{\iota}_g^*)^{-1}\mathscr{E}).
\end{equation*}
The orbit of $(\bL,\Sigma,\mathscr{E})$ under this action will be denoted by $[\bL,\Sigma,\mathscr{E}]$ and the set of all orbits will be denoted by $[\Cusp(\bG)]$. By \cite[7.1.12, 7.6]{lusztig:1985:character-sheaves} and \cref{thm:Lusztig-iso} we have a decomposition
\begin{equation*}
\Irr(\CS(\bG)) = \bigsqcup_{[\bL,\Sigma,\mathscr{E}] \in [\Cusp(\bG)]} \Irr(\CS(\bG) \mid K_{\bL,\Sigma,\mathscr{E}}^{\bG}).
\end{equation*}
\end{pa}

\begin{pa}
The Frobenius endomorphism $F$ defines a permutation of the set $\Cusp(\bG)$ via the map $(\bL,\Sigma,\mathscr{E}) \mapsto (F^{-1}(\bL),F^{-1}(\Sigma),F^*\mathscr{E})$. We denote by $\Cusp(\bG)^F$ the set of fixed points. The permutation $\Cusp(\bG) \to \Cusp(\bG)$ induced by $F$ also induces a permutation of the $\bG$-orbits $[\Cusp(\bG)]$ and we again denote by $[\Cusp(\bG)]^F$ the set of fixed points.
\end{pa}

\begin{pa}
A standard argument using the Lang--Steinberg theorem shows that the canonical map $\Cusp(\bG)^F \to [\Cusp(\bG)]^F$ is surjective, see \cite[10.5]{lusztig:1985:character-sheaves}. Moreover, we have a decomposition
\begin{equation*}
\Irr(\CS(\bG))^F = \bigsqcup_{[\bL,\Sigma,\mathscr{E}] \in [\Cusp(\bG)]^F} \Irr(\CS(\bG) \mid K_{\bL,\Sigma,\mathscr{E}}^{\bG})^F.
\end{equation*}
If $\Class(\bG^F \mid [\bL,\Sigma,\mathscr{E}]) \subseteq \Class(\bG^F)$ denotes the subspace spanned by the characteristic functions of the character sheaves contained in $\Irr(\CS(\bG) \mid K_{\bL,\Sigma,\mathscr{E}}^{\bG})^F$ then we have a corresponding direct sum decomposition
\begin{equation*}
\Class(\bG^F) = \bigoplus_{[\bL,\Sigma,\mathscr{E}] \in [\Cusp(\bG)]^F} \Class(\bG^F \mid [\bL,\Sigma,\mathscr{E}]).
\end{equation*}
\end{pa}

\begin{pa}
Now assume $(\bL,\Sigma,\mathscr{E}) \in \Cusp(\bG)^F$ is $F$-fixed. Then, by definition, there exists an isomorphism $\varphi : F^*\mathscr{E} \to \mathscr{E}$ (recall that $\mathscr{E}$ is taken up to isomorphism). We will denote by $\Cusp(\bG,F)$ the set of tuples $(\bL,\Sigma,\mathscr{E},\varphi)$ with $(\bL,\Sigma,\mathscr{E}) \in \Cusp(\bG)^F$ and $\varphi : F^*\mathscr{E} \to \mathscr{E}$ an isomorphism; such tuples are called induction data in \cite[1.8]{lusztig:1990:green-functions-and-character-sheaves}. The isomorphism $\varphi$ naturally extends to an isomorphism $\varphi^{\sharp} : F^*\mathscr{E}^{\sharp} \to \mathscr{E}^{\sharp}$ by the functoriality of intersection cohomology which, in turn, extends to an isomorphism $\phi : F^*K_{\bL,\Sigma,\mathscr{E}}^{\bG} \to K_{\bL,\Sigma,\mathscr{E}}^{\bG}$, see \cite[8.2]{lusztig:1985:character-sheaves}.
\end{pa}

\begin{pa}\label{pa:property-DL-ind}
In what follows we will need the following powerful generalization of \cref{lem:ind-split-Levi} to the case of non-split Levi subgroups:
\begin{enumerate}[leftmargin=1.5cm]
	\item[($\mathcal{R}_{\bG,F}$)] For any tuple $(\bL,\Sigma,\mathscr{E},\varphi) \in \Cusp(\bG,F)$ we have $R_{\bL}^{\bG}(\mathcal{X}_{\mathscr{E}^{\sharp},\varphi^{\sharp}}) = \mathcal{X}_{K_{\bL,\Sigma,\mathscr{E}}^{\bG},\phi}$.
\end{enumerate}
Assume $p$ is good for $\bG$. If $q$ is sufficiently large then it is a result of Lusztig that ($\mathcal{R}_{\bG,F}$) holds, \cite[Prop.~9.2]{lusztig:1990:green-functions-and-character-sheaves}. If $Z(\bG)$ is connected then Shoji has shown that ($\mathcal{R}_{\bG,F}$) always holds, see \cite[Thm.~4.2]{shoji:1996:on-the-computation}.
\end{pa}

\section{Harish-Chandra Parameterization of Character Sheaves}\label{sec:harish-chandra-param-char-sheaves}

\begin{pa}
We will assume we have a fixed triple $(\bL,\Sigma,\mathscr{E}) \in \Cusp(\bG)$ and a parabolic subgroup $\bP\leqslant\bG$ with Levi complement $\bL$. We denote by $N_{\bG}(\bL,\Sigma,\mathscr{E}) \leqslant \bG$ the stabiliser of the triple under the $\bG$-action described in \cref{pa:cusp-pairs}. We clearly have $\bL \leqslant N_{\bG}(\bL,\Sigma,\mathscr{E}) \leqslant N_{\bG}(\bL)$ and so we obtain a subgroup $W_{\bG}(\bL,\Sigma,\mathscr{E}) := N_{\bG}(\bL,\Sigma,\mathscr{E})/\bL$ of the relative Weyl group $W_{\bG}(\bL) := N_{\bG}(\bL)/\bL$. For brevity we set $\mathcal{W} = W_{\bG}(\bL,\Sigma,\mathscr{E})$ and $\mathcal{N} = N_{\bG}(\bL,\Sigma,\mathscr{E})$ for the rest of this section.
\end{pa}

\begin{pa}\label{pa:Lusztig-basis}
Let us denote by $\mathcal{A}_{\bL,\Sigma,\mathscr{E}}^{\bG}$ the endomorphism algebra $\End_{\mathscr{M}_{\bG}(\bG)}(K_{\bL,\Sigma,\mathscr{E}}^{\bG})$, which is a finite dimensional $\Ql$-algebra. To each element $w \in \mathcal{W}$ Lusztig has defined an invertible endomorphism $\Theta_w^{\bG} \in \mathcal{A}_{\bL,\Sigma,\mathscr{E}}^{\bG}$, as follows. Using the notation of \cref{pa:cuspidal-induce} let $\gamma_w : \tilde{Y} \to \tilde{Y}$ be defined by $\gamma_w(g,x\bL) = (g,xn_w^{-1}\bL)$ where $n_w \in \mathcal{N}$ is a fixed representative of $w \in \mathcal{W}$. There exists an isomorphism $\theta_w : \mathscr{E} \to \boldsymbol{\iota}_{n_w}^*\mathscr{E}$ of $\bL$-equivariant local systems and this extends uniquely to an isomorphism $\tilde{\theta}_w^{\bG} : \tilde{\mathscr{E}} \to \gamma_w^*\tilde{\mathscr{E}}$ satisfying $\alpha^*\theta_w^{\bG} = \beta^*\tilde{\theta}_w^{\bG}$, see \cite[Prop.~3.5]{lusztig:1984:intersection-cohomology-complexes}. As $\gamma_*\gamma_w^* = \gamma_*$ we have $\gamma_*\tilde{\theta}_w^{\bG}$ is an automorphism of $\gamma_*\tilde{\mathscr{E}}$. Applying the fully faithful functor $\IC(\overline{Y},-)[\dim Y]$ to $\gamma_*\tilde{\theta}_w^{\bG}$ we get $\Theta_w^{\bG} \in \mathcal{A}_{\bL,\Sigma,\mathscr{E}}^{\bG}$.
\end{pa}

\begin{thm}[{}{Lusztig, \cite[10.2]{lusztig:1985:character-sheaves}}]\label{thm:twisted-algebra}
There exists a $2$-cocycle $\alpha \in Z^2(\mathcal{W},\Ql)$ such that $\Theta_w^{\bG}\Theta_{w'}^{\bG} = \alpha(w,w')\Theta_{ww'}^{\bG}$. Hence, we have an isomorphism of $\Ql$-algebras $\Theta_{\bL,\Sigma,\mathscr{E}}^{\bG} : \mathcal{A}_{\bL,\Sigma,\mathscr{E}}^{\bG} \to \Ql[\mathcal{W}]_{\alpha}$.
\end{thm}

\begin{rem}
In most cases it is known that $\alpha$ can be assumed to be trivial but this has yet to be established in general. For instance, after \cite[Thm.~9.2]{lusztig:1984:characters-of-reductive-groups-over-finite-fields}, this is the case when $\Sigma$ contains a unipotent element.
\end{rem}

\begin{pa}
One gets an isomorphism $\theta_w^{\sharp} : \mathscr{E}^{\sharp} \to \boldsymbol{\iota}_{n_w}^*\mathscr{E}^{\sharp}$ by applying the fully faithful functor $\IC(\overline{\Sigma},-)[\dim\Sigma]$ to $\theta_w$. Finally, applying induction we get an isomorphism
\begin{equation*}
\ind_{\bL\subseteq\bP}^{\bG}(\theta_w^{\sharp}) : \ind_{\bL\subseteq\bP}^{\bG}(\mathscr{E}^{\sharp}) \to \ind_{\bL\subseteq\bP}^{\bG}(\boldsymbol{\iota}_{n_w}^*\mathscr{E}^{\sharp}).
\end{equation*}
By \cite[Lem.~3.9]{taylor:2014:evaluating-characteristic-functions} we have $\ind_{\bL\subseteq\bP}^{\bG}(\boldsymbol{\iota}_{n_w}^*\mathscr{E}^{\sharp}) = \boldsymbol{\iota}_{n_w}^*\ind_{\bL\subseteq{}^{n_w}\bP}^{\bG}(\mathscr{E}^{\sharp})$ and by the $\bG$-equivariance we may identify $\boldsymbol{\iota}_{n_w}^*\ind_{\bL\subseteq{}^{n_w}\bP}^{\bG}(\mathscr{E}^{\sharp})$ with $\ind_{\bL\subseteq{}^{n_w}\bP}^{\bG}(\mathscr{E}^{\sharp})$. Hence, we may also think of the above as an isomorphism
\begin{equation*}
\ind_{\bL\subseteq\bP}^{\bG}(\theta_w^{\sharp}) : \ind_{\bL\subseteq\bP}^{\bG}(\mathscr{E}^{\sharp}) \to \ind_{\bL\subseteq {}^{n_w}\bP}^{\bG}(\mathscr{E}^{\sharp}).
\end{equation*}
We will need the following compatibility between these two constructions.
\end{pa}

\begin{prop}\label{lem:compatible-endomorphisms}
We have a commutative diagram
\begin{equation*}
\begin{tikzcd}[column sep=6em]
\ind_{\bL\subseteq\bP}^{\bG}(\mathscr{E}^{\sharp}) \arrow[r,"\ind_{\bL\subseteq\bP}^{\bG}(\theta_w^{\sharp})"]\arrow[d] & \ind_{\bL\subseteq{}^{n_w}\bP}^{\bG}(\mathscr{E}^{\sharp}) \arrow[d]\\
K_{\bL,\Sigma,\mathscr{E}}^{\bG} \arrow[r,"\Theta_w^{\bG}"] & K_{\bL,\Sigma,\mathscr{E}}^{\bG}
\end{tikzcd}
\end{equation*}
where the vertical arrows are the canonical isomorphisms of \cref{thm:Lusztig-iso}.
\end{prop}

\begin{proof}
We freely use the notation of \cref{pa:ind-char-sheaves,pa:cuspidal-induce}. Let us denote by $\imath_Y : Y \to \bG$ and $\jmath : \tau^{-1}(Y) \to \tilde{X}$ the natural inclusion morphisms. The endomorphism $\Theta_w^{\bG}$ is uniquely determined by the property that $\imath_Y^*\Theta_w^{\bG}$ is $\gamma_*\tilde{\theta}_w^{\bG}[\dim Y]$. Hence it suffices to show that the morphism corresponding to $\ind_{\bL\subseteq\bP}^{\bG}(\theta_w^{\sharp})$ has this property.

We denote by $f$ the unique morphism satisfying $\tilde{\pi}\theta_w^{\sharp} = \tilde{\sigma}f$. Then, by definition, we have $\ind_{\bL\subseteq\bP}^{\bG}(\theta_w^{\sharp}) = \tau_!f$. Lusztig has shown that we have an isomorphism $\kappa : \tilde{Y} \to \tau^{-1}(Y)$ defined by $\kappa(g,h\bL) = (g,h\bP)$, see \cite[4.3(c)]{lusztig:1984:intersection-cohomology-complexes}. The isomorphism between $\ind_{\bL\subseteq\bP}^{\bG}(\mathscr{E}^{\sharp})$ and $K_{\bL,\Sigma,\mathscr{E}}^{\bG}$ gives an isomorphism
\begin{equation*}
\imath_Y^*\ind_{\bL}^{\bG}(\mathscr{E}^{\sharp}) = \imath_Y^*\tau_!D \to \imath_Y^*K_{\bL,\Sigma,\mathscr{E}}^{\bG} = \gamma_*\tilde{\mathscr{E}},
\end{equation*}
\cite[4.4, 4.5]{lusztig:1984:intersection-cohomology-complexes}. Now $\imath_Y^*\tau_!f = \tau_!\jmath^*f$. Moreover, by \cite[4.3(b)]{lusztig:1984:intersection-cohomology-complexes} we have $\gamma$ is proper so by smooth base change $\tau_! = \gamma_* \kappa^*$ because $\gamma = \tau\circ \kappa$. Under the above isomorphism $\imath_Y^*\tau_!f$ corresponds to $\gamma_*\kappa^*\jmath^*f$. Hence, it suffices to show that $\kappa^*\jmath^*f = \tilde{\theta}_w^{\bG}[\dim Y]$.

Recall from \cite[4.3(a)]{lusztig:1984:intersection-cohomology-complexes} that we have an equality
\begin{equation*}
\dim\bG + \dim\bU - \dim\bP = \dim Y - \dim\Sigma.
\end{equation*}
As $\tilde{\sigma}f = \tilde{\pi}\theta_w^{\sharp}$ we get that $\sigma^*f = \pi^*\theta_w^{\sharp}[\dim Y - \dim \Sigma]$. If $\imath_{\hat{Y}} : \hat{Y} \to \hat{X}$ is the natural inclusion map then $\jmath\circ\kappa\circ\beta = \sigma\circ\imath_{\hat{Y}}$ which implies that
\begin{equation*}
\beta^*\kappa^*\jmath^*f = \imath_{\hat{Y}}^*\sigma^*f = \imath_{\hat{Y}}^*\pi^*\hat{\theta}_w^{\bL}[\dim Y - \dim \Sigma].
\end{equation*}
The image of the morphism $\pi\circ \imath_{\hat{Y}}$ is contained in $\Sigma$ so agrees with $\alpha$. Hence, if $\imath_{\Sigma} : \Sigma \to \bL$ is the natural inclusion map then we have $\imath_{\hat{Y}}^*\pi^*\theta_w^{\sharp}$ coincides with $\alpha^*\imath_{\Sigma}^*\theta_w^{\sharp}$ but by definition $\imath_{\Sigma}^*\theta_w^{\sharp} = \theta_w[\dim\Sigma]$. Putting things together we get that $\beta^*\kappa^*\jmath^*f = \alpha^*\theta_w[\dim Y]$ which implies that $\kappa^*\jmath^*f = \tilde{\theta}_w[\dim Y]$, as desired.
\end{proof}

\begin{pa}\label{pa:HC-param-char-sheaves}
Recall from \cref{sec:decomp-semisimple-obj} that we have a functor
\begin{equation*}
\mathfrak{F}_{\bL,\Sigma,\mathscr{E}}^{\bG} = \Hom_{\mathscr{M}_{\bG}(\bG)}(-,K_{\bL,\Sigma,\mathscr{E}}^{\bG}) : \mathscr{M}_{\bG}(\bG) \to \lmod{\mathcal{A}_{\bL,\Sigma,\mathscr{E}}^{\bG}}.
\end{equation*}
By \cref{thm:twisted-algebra} we may view this as a functor $\mathscr{M}_{\bG}(\bG) \to \lmod{\Ql[\mathcal{W}]_{\alpha}}$. For any \emph{$\alpha$-character} $\eta \in \Class_{\alpha}(\mathcal{W})$ we will denote by $K_{\eta}^{\bG} \in \CS(\bG,[\bL,\Sigma,\mathscr{E}])$ a perverse sheaf such that $\mathfrak{F}_{\bL,\Sigma,\mathscr{E}}^{\bG}(K_{\eta}^{\bG})$ affords the character $\eta$. This yields a bijection
\begin{equation*}
\Irr_{\alpha}(\mathcal{W}) \to \Irr(\CS(\bG) \mid K_{\bL,\Sigma,\mathscr{E}}^{\bG})
\end{equation*}
as in \cref{lem:bij-iso-classes}.
\end{pa}

\begin{pa}
Now assume $(\bL,\Sigma,\mathscr{E},\varphi) \in \Cusp(\bG,F)$. Recall that the isomorphism $\varphi : F^*\mathscr{E} \to \mathscr{E}$ induces an isomorphism $\phi : F^*K_{\bL,\Sigma,\mathscr{E}}^{\bG} \to K_{\bL,\Sigma,\mathscr{E}}^{\bG}$. Using $\phi$ we obtain a corresponding algebra automorphism $\sigma : \mathcal{A}_{\bL,\Sigma,\mathscr{E}}^{\bG} \to \mathcal{A}_{\bL,\Sigma,\mathscr{E}}^{\bG}$ as in \cref{pa:algebra-auto}. It is readily checked that under the isomorphism of \cref{thm:Lusztig-iso} the isomorphism $\phi$ corresponds to the isomorphism
\begin{equation*}
\ind_{\bL \subseteq \bP}^{\bG}(\varphi^{\sharp}) : F^*\ind_{\bL \subseteq F(\bP)}^{\bG}(\mathscr{E}^{\sharp}) \to \ind_{\bL \subseteq \bP}^{\bG}(\mathscr{E}^{\sharp}).
\end{equation*}
Now it is straightforward to check that for some function $s : \mathcal{W} \to \Ql$ we have $\sigma^{-1}(\Theta_w^{\bG}) = s(w)\Theta_{F(w)}^{\bG}$ for all $w \in \mathcal{W}$.
\end{pa}

\begin{lem}\label{lem:ext-2-cocycle}
Assume $F \in \Aut(\mathcal{W})$ has order $m > 0$. Then the $2$-cocycle $\alpha \in Z^2(\mathcal{W},\Ql)$ extends to a $2$-cocycle $\alpha \in Z^2(\mathcal{W}\sd F,\Ql)$ by setting
\begin{equation*}
\alpha(wF^i,xF^j) = s(x)\cdots s(F^{i-1}(x))\alpha(w,F^i(x))
\end{equation*}
for all $0 \leqslant i,j < m$ and $w,x \in \mathcal{W}$. In particular, $\alpha(wF,x) = s(x)\alpha(w,F(x))$ and $\alpha(w,xF) = \alpha(w,x)$.
\end{lem}

\begin{proof}
As $\sigma$, hence $\sigma^{-1}$, is an algebra automorphism we have
\begin{equation*}
\alpha(w,w')s(ww') = \alpha(F(w),F(w'))s(w)s(w')
\end{equation*}
for all $w,w' \in \mathcal{W}$. Using this it is a straightforward calculation to show that $\alpha$ satisfies the $2$-cocycle condition.
%
\end{proof}

\begin{assumption}
From this point forward we assume that the basis $(\Theta_w^{\bG} \mid w \in \mathcal{W})$ of $\mathcal{A}_{\bL,\Sigma,\mathscr{E}}^{\bG}$ is chosen such that the $2$-cocycle $\alpha \in Z^2(\mathcal{W}\sd F,\Ql)$ is unital.
\end{assumption}

\begin{pa}
It follows from \cref{lem:iso-correspondence} that the bijection in \cref{pa:HC-param-char-sheaves} restricts to a bijection
\begin{equation*}
\Irr_{\alpha}(\mathcal{W})^F \to \Irr(\CS(\bG) \mid K_{\bL,\Sigma,\mathscr{E}}^{\bG})^F.
\end{equation*}
Assume $\eta \in \Irr_{\alpha}(\mathcal{W})^F$ is an $F$-stable $\alpha$-character and let $A = K_{\eta}^{\bG}$ then there exists an isomorphism $\phi_A : F^*A \to A$. As in \cref{pa:iso-twisted-module} we obtain a corresponding isomorphism $\sigma_A : \mathfrak{F}_{\bL,\Sigma,\mathscr{E}}^{\bG}(A)_{\sigma} \to \mathfrak{F}_{\bL,\Sigma,\mathscr{E}}^{\bG}(A)$ . We make $\mathfrak{F}_{\bL,\Sigma,\mathscr{E}}^{\bG}(A)$ into a $\Ql[\mathcal{W}\sd F]_{\alpha}$-module by setting $\Theta_F\cdot v = \sigma_A^{-1}(v)$. This module then affords an irreducible $\alpha$-character $\widetilde{\eta} \in \Irr_{\alpha}(\mathcal{W}\sd F)$ that extends $\eta$. We will denote by $\phi_{\widetilde{\eta}} : F^*K_{\eta}^{\bG} \to K_{\eta}^{\bG}$ an isomorphism such that $\mathfrak{F}_{\bL,\Sigma,\mathscr{E}}^{\bG}(K_{\eta}^{\bG})$ affords the character $\widetilde{\eta}$ when viewed as a $\Ql[\mathcal{W}\sd F]_{\alpha}$-module.
\end{pa}

\begin{rem}\label{rem:choice-of-isos}
We may, and will, assume that $\phi_{\widetilde{\eta}}$ is part of a family of isomorphisms as in \cref{thm:char-funcs-as-basis}.
\end{rem}

\begin{pa}
For each element $w \in \mathcal{W}$ recall our choice of representative $n_w \in \mathcal{N}$ from \cref{pa:Lusztig-basis}. In addition let us choose an element $g_w \in \bG$ such that $g_w^{-1}F(g_w) = n_w^{-1}$; such an element exists by the Lang--Steinberg theorem. We then obtain a new triple $(\bL_w,\Sigma_w,\mathscr{E}_w) \in \Cusp(\bG)^F$ where
\begin{equation*}
\bL_w = \boldsymbol{\iota}_{g_w}(\bL) \qquad \Sigma_w = \boldsymbol{\iota}_{g_w}(\Sigma) \qquad \mathscr{E}_w := \boldsymbol{\iota}_{g_w^{-1}}^*\mathscr{E}.
\end{equation*}
A standard result shows that the map $w \mapsto [\bL_w,\Sigma_w,\mathscr{E}_w]$ gives a bijection between the $F$-conjugacy classes of $\mathcal{W}$ and the orbits of $\bG^F$ acting on $\Cusp(\bG)^F$. Following \cite[10.6]{lusztig:1985:character-sheaves} we get an isomorphism $\varphi_w : F^*\mathscr{E}_w \to \mathscr{E}_w$ by setting $\varphi_w = (\boldsymbol{\iota}_{g_w^{-1}}^*\varphi) \circ (F^*\boldsymbol{\iota}_{g_w^{-1}}^*\theta_w)$.
\end{pa}

\begin{pa}
Now assume $w,x,z \in \mathcal{W}$ satisfy $w^{-1} = zx^{-1}F(z^{-1})$. We can find an element $n \in \mathcal{N}$, representing $z \in \mathcal{W}$, such that $n_w^{-1} = nn_x^{-1}F(n^{-1})$. We then have $g = g_wng_x^{-1} \in \bG^F$ satisfies $\boldsymbol{\iota}_g(\bL_x) = \bL_w$ and $\boldsymbol{\iota}_g(\bP_x) = \bP_w$. Furthermore, there is an isomorphism $\psi : \boldsymbol{\iota}_g^*\mathscr{E}_w \to \mathscr{E}_x$. Now we get two isomorphisms $F^*\boldsymbol{\iota}_g^*\mathscr{E}_w \to \mathscr{E}_x$ given by $\varphi_x \circ F^*\psi$ and $\psi\circ \boldsymbol{\iota}_g^*\varphi_w$. As $\mathscr{E}_w$ and $\mathscr{E}_x$ are irreducible there exists a scalar $\omega = \omega(n,x,w) \in \Ql^{\times}$ such that $\psi\circ \boldsymbol{\iota}_g^*\varphi_w = \omega(n,x,w)(\varphi_x \circ F^*\psi)$. The scalar $\omega$ does not depend on the isomorphism $\psi$.
\end{pa}

\begin{pa}
By functoriality we get isomorphisms between ICs that satisfy the same identity, namely $(\psi^{\sharp}\circ \boldsymbol{\iota}_g^*\varphi_w^{\sharp}) = \omega(\varphi_x^{\sharp} \circ F^*\psi^{\sharp})$. So for any $l \in \bL_x^F$ we get that
\begin{equation*}
\Tr(\varphi_w^{\sharp},\mathscr{H}_{\boldsymbol{\iota}_g(l)}^i(\mathscr{E}_w^{\sharp})) = \omega\Tr(\psi^{\sharp -1}\circ\varphi_x^{\sharp} \circ \psi^{\sharp},\mathscr{H}_{\boldsymbol{\iota}_g(l)}^i(\mathscr{E}_w^{\sharp})) = \omega\Tr(\varphi_x^{\sharp},\mathscr{H}_l^i(\mathscr{E}_x^{\sharp})),
\end{equation*}
From the definition we thus have $\mathcal{X}_{\mathscr{E}_w^{\sharp},\varphi_w^{\sharp}} \circ \boldsymbol{\iota}_g = \omega(g,x,w)\mathcal{X}_{\mathscr{E}_x^{\sharp},\varphi_x^{\sharp}}$. In a special case, one can deduce the following from the brief remarks in \cite[10.6.4, 10.6.5]{lusztig:1985:character-sheaves}.
\end{pa}

\begin{prop}\label{prop:conj-scalar-calc}
Recall that $w^{-1} = zx^{-1}F(z^{-1})$ and $n \in \mathcal{N}$ is a representative of $z \in \mathcal{W}$ satisfying $n_w^{-1} = nn_x^{-1}F(n^{-1})$. We have
\begin{equation*}
\omega(n,x,w) = \alpha(w,w^{-1})\alpha(x,x^{-1})^{-1}c_{\alpha}(z,x^{-1}F).
\end{equation*}
In particular, $\omega(n,x,w)$ is independent of $n$.
\end{prop}

\begin{proof}
For brevity we set $\omega = \omega(n,x,w)$. Applying the induction construction we get an equality
\begin{equation}\label{eq:omega-identity}
\ind_{\bL_x \subseteq \bP_x}^{\bG}(\psi^{\sharp}) \circ \ind_{\bL_x \subseteq \bP_x}^{\bG}(\boldsymbol{\iota}_g^*\varphi_w^{\sharp}) = \omega(\ind_{\bL_x \subseteq \bP_x}^{\bG}(\varphi_x^{\sharp}) \circ \ind_{\bL_x \subseteq \bP_x}^{\bG}(F^*\psi^{\sharp})).
\end{equation}
We may freely choose $\psi$ to perform this calculation. If $\theta : \boldsymbol{\iota}_n^*\mathscr{E} \to \mathscr{E}$ is an isomorphism then we may take $\psi = \boldsymbol{\iota}_{g_x^{-1}}^*\theta$. As $n \in \mathcal{N}$ represents $z \in \mathcal{W}$ we have $n = ln_z$ for some $l \in \bL$. Using the $\bL$-equivariance we can identify $\mathscr{E}$ and $\boldsymbol{\iota}_l^*\mathscr{E}$. We assume that, under this identification, $\theta$ is identified with the isomorphism $\theta_z^{-1} : \boldsymbol{\iota}_{n_z}^*\mathscr{E} \to \mathscr{E}$.

Using the $\bG$-equivariance to make identifications the left hand side of \cref{eq:omega-identity} becomes
\begin{equation*}
\ind_{\bL \subseteq {}^{n_z}\bP}^{\bG}(\theta_z^{-1}) \circ \ind_{\bL \subseteq F({}^{n_z}\bP)}^{\bG}(\varphi^{\sharp}) \circ \ind_{\bL \subseteq {}^{n_w^{-1}}F({}^{n_z}\bP)}^{\bG}(F^*\theta_w)
\end{equation*}
and the right hand side becomes
\begin{equation*}
\ind_{\bL \subseteq F(\bP)}^{\bG}(\varphi) \circ \ind_{\bL \subseteq {}^{n_x^{-1}}F(\bP)}^{\bG}(F^*\theta_x) \circ \ind_{\bL \subseteq {}^{n_zn_x^{-1}}F(\bP)}(F^*\theta_z^{-1})
\end{equation*}
After \cref{lem:compatible-endomorphisms} we thus have an equality
\begin{equation*}
\Theta_z^{-1} \circ \phi \circ F^*\Theta_w = \omega(g,x,w)(\phi \circ F^*\Theta_x \circ F^*\Theta_z^{-1})
\end{equation*}
in the algebra $\mathcal{A}_{\bL,\Sigma,\mathscr{E}}^{\bG}$. Using the definition of $\sigma$ we get an equality $\sigma^{-1}(\Theta_z^{-1})\Theta_w = \omega(\Theta_x\Theta_z^{-1})$ in the algebra $\Ql[\mathcal{W}]_{\alpha}$.

As $\sigma^{-1}(\Theta_z^{-1}) = \Theta_F\Theta_z^{-1}\Theta_F^{-1}$ in the algebra $\Ql[\mathcal{W}\sd F]_{\alpha}$ we get an equality
\begin{equation*}
\Theta_z\Theta_x^{-1}\Theta_F\Theta_z^{-1} = \omega \Theta_w^{-1}\Theta_F
\end{equation*}
in $\Ql[\mathcal{W}\sd F]_{\alpha}$. The statement now follows from the definitions, in particular using the fact that $\alpha(y,F) = \alpha(y,1) = \alpha(1,1)$ for any $y \in \mathcal{W}$.
\end{proof}

\begin{prop}\label{prop:bumper-DL-ind-char-sheaves}
Fix a tuple $(\bL,\Sigma,\mathscr{E},\varphi) \in \Cusp(\bM,F)$. We define a $\Ql$-linear map $\mathscr{R}^{\bG}_{\bL,\Sigma,\mathscr{E},\varphi} : \Class_{\alpha}(W_{\bG}(\bL,\Sigma,\mathscr{E}).F) \to \Class(\bG^F \mid [\bL,\Sigma,\mathscr{E}])$ by setting
\begin{equation*}
\mathscr{R}^{\bG}_{\bL,\Sigma,\mathscr{E},\varphi}(f) = \frac{1}{|W_{\bG}(\bL,\Sigma,\mathscr{E})|}\sum_{w \in W_{\bG}(\bL,\Sigma,\mathscr{E})} \alpha(w,w^{-1})^{-1}f(w^{-1}F)R_{\bL_w}^{\bG}(\mathcal{X}_{\mathscr{E}_w^{\sharp},\varphi_w^{\sharp}}).
\end{equation*}
Assume ($\mathcal{R}_{\bG,F}$) holds, and recall our assumption that the $2$-cocyle $\alpha \in Z^2(W_{\bG}(\bL,\Sigma,\mathscr{E})\sd F,\Ql)$ is unital. Then the following hold:
\begin{enumerate}
	\item for any irreducible $\alpha$-character $\widetilde{\eta} \in \Irr_{\alpha}(W_{\bG}(\bL,\Sigma,\mathscr{E})\sd F)$ we have
\begin{equation*}
\mathscr{R}^{\bG}_{\bL,\Sigma,\mathscr{E},\varphi}(\widetilde{\eta}) = \mathcal{X}_{K_{\eta},\phi_{\widetilde{\eta}}},
\end{equation*}
	\item $\mathscr{R}^{\bG}_{\bL,\Sigma,\mathscr{E},\varphi}$ is an isometry onto its image,
	\item we have an equality of linear maps $\Class_{\alpha}(W_{\bM}(\bL,\Sigma,\mathscr{E}).F) \to \Class(\bG^F \mid [\bL,\Sigma,\mathscr{E}])$
\begin{equation*}
R_{\bM}^{\bG} \circ \mathscr{R}^{\bM}_{\bL,\Sigma,\mathscr{E},\varphi} = \mathscr{R}^{\bG}_{\bL,\Sigma,\mathscr{E},\varphi} \circ \Ind_{W_{\bM}(\bL,\Sigma,\mathscr{E}).F}^{W_{\bG}(\bL,\Sigma,\mathscr{E}).F}.
\end{equation*}
\end{enumerate}
\end{prop}

\begin{proof}
(a). This follows from \cite[10.4.5]{lusztig:1985:character-sheaves} by applying the identity $\alpha(w^{-1},F) = \alpha(w^{-1},1) = \alpha(1,1)$ and ($\mathcal{R}_{\bG,F}$) to each of the tuples $(\bL_w,\Sigma_w,\mathscr{E}_w,\varphi_w)$.

(b). This is clear as $\mathcal{R}^{\bG}_{\bL,\Sigma,\mathscr{E},\varphi}$ maps an orthonormal basis of $\Class_{\alpha}(W_{\bG}(\bL,\Sigma,\mathscr{E}).F)$, c.f., \cref{pa:ortho-basis-coset-funcs}, onto an orthonormal basis of $\Class(\bG^F \mid [\bL,\Sigma,\mathscr{E}])$, c.f., \cref{thm:char-funcs-as-basis,rem:choice-of-isos}.

(c). It is simple to check that \cref{prop:conj-scalar-calc} implies $\mathscr{R}^{\bM}_{\bL,\Sigma,\mathscr{E},\varphi}(\pi_{x^{-1}F}) = R_{\bL_x}^{\bG}(\mathcal{X}_{\mathscr{E}_x^{\sharp},\varphi_x^{\sharp}})$, where $\pi_{x^{-1}F}$ is the function defined in \cref{pa:orb-funcs}. Using (ii) of \cref{prop:fun-G-sets-bumper} we can now argue as in \cite[Prop.~15.7]{digne-michel:1991:representations-of-finite-groups-of-lie-type}.
\end{proof}

\section{A Comparison Theorem for Character Sheaves}\label{sec:comparison-thms}
\begin{pa}
Assume $\bM \leqslant \bG$ is a Levi subgroup of $\bG$ and $(\bL,\Sigma,\mathscr{E}) \in \Cusp(\bM)$ is a cuspidal triple. We choose parabolic subgroups $\bP \leqslant \bQ \leqslant \bG$ with Levi complements $\bL \leqslant \bP$ and $\bM \leqslant \bQ$. In this section we note that an analogue of Howlett--Lehrer's Comparison Theorem, see \cite[5.9]{howlett-lehrer:1983:representations-of-generic-algebras}, holds for induction of character sheaves. Namely $\ind_{\bM\subseteq\bQ}^{\bG}$ corresponds to the usual induction $\Ind_{W_{\bM}(\bL,\Sigma,\mathscr{E})}^{W_{\bG}(\bL,\Sigma,\mathscr{E})}$ under the correspondence described in \cref{pa:HC-param-char-sheaves}. In the case where $\Sigma$ contains a unipotent element this was pointed out by Lusztig in \cite[2.5]{lusztig:1986:on-the-character-values}.
\end{pa}

\begin{pa}
In general if $A \in \mathscr{M}_{\bM}(\bM)$ is an $\bM$-equivariant perverse sheaf then the complex $\ind_{\bM\subseteq\bQ}^{\bG}(A) \in \mathscr{D}_{\bG}(\bG)$ need not necessarily be perverse. However, if $A \in \CS(\bM)$ then Lusztig has shown that $\ind_{\bM\subseteq\bQ}^{\bG}(A) \in \CS(\bG)$, see \cite[4.4]{lusztig:1985:character-sheaves}. Hence, we have $\ind_{\bM\subseteq\bQ}^{\bG}$ defines a $\Ql$-linear functor $\CS(\bM) \to \CS(\bG)$ between abelian categories. In particular, we can appeal to the formalism discussed in \cref{sec:decomp-semisimple-obj}.
\end{pa}

\begin{pa}
Let $\bP \leqslant \bQ \leqslant \bG$ be parabolic subgroups of $\bG$ with Levi complements $\bL \leqslant \bP$ and $\bM \leqslant \bQ$. We set $\tilde{\bP} = \bM \cap \bP$ which is a parabolic subgroup of $\bM$ with Levi complement $\bL$. By \cite[4.2, 4.4]{lusztig:1985:character-sheaves} we have an isomorphism
\begin{equation}\label{eq:iso-ind}
\ind_{\bM \subseteq \bQ}^{\bG}(\ind_{\bL \subseteq \tilde{\bP}}^{\bM}(\mathscr{E}^{\sharp})) \cong \ind_{\bL \subseteq \bP}^{\bG}(\mathscr{E}^{\sharp}).
\end{equation}
After \cref{thm:Lusztig-iso} this yields an isomorphism
\begin{equation*}
\ind_{\bM\subseteq\bQ}^{\bG}(K_{\bL,\Sigma,\mathscr{E}}^{\bM}) \cong K_{\bL,\Sigma,\mathscr{E}}^{\bG}
\end{equation*}
and we obtain an algebra homomorphism $\ind_{\bM\subseteq\bQ}^{\bG} : \mathcal{A}_{\bL,\Sigma,\mathscr{E}}^{\bM} \to \mathcal{A}_{\bL,\Sigma,\mathscr{E}}^{\bG}$. We want to show the following compatibility.
\end{pa}

\begin{prop}\label{lem:end-alg-identifications}
We have a commutative diagram
\begin{equation*}
\begin{tikzcd}
\mathcal{A}_{\bL,\Sigma,\mathscr{E}}^{\bM} \arrow[r,"\ind_{\bM\subseteq\bQ}^{\bG}"]\arrow[d] & \mathcal{A}_{\bL,\Sigma,\mathscr{E}}^{\bG} \arrow[d]\\
\Ql[W_{\bM}(\bL,\Sigma,\mathscr{E})]_{\alpha} \arrow[r,hook] & \Ql[W_{\bG}(\bL,\Sigma,\mathscr{E})]_{\alpha}
\end{tikzcd}
\end{equation*}
where the bottom arrow is the canonical inclusion of algebras.
\end{prop}

\begin{proof}
For any object $\square$ introduced in \cref{pa:ind-char-sheaves} we affix subscripts and superscripts, such as $\square_{\bL\subseteq\bP}^{\bG}$, to indicate that it is defined with regards to $\ind_{\bL \subseteq \bP}^{\bG}$. Let $D \in \mathscr{M}(\tilde{X}_{\bL \subseteq \bP}^{\bG})$ and $D' \in \mathscr{M}(\tilde{X}_{\bL \subseteq \tilde{\bP}}^{\bM})$ be the canonical perverse sheaves satisfying $\tilde{\pi}_{\bL \subseteq \bP}^{\bG}\mathscr{E}^{\sharp} = \tilde{\sigma}_{\bL \subseteq \bP}^{\bG}D$ and $\tilde{\pi}_{\bL \subseteq \tilde{\bP}}^{\bM}\mathscr{E}^{\sharp} = \tilde{\sigma}_{\bL \subseteq \tilde{\bP}}^{\bM}D'$ respectively. By definition we have
\begin{equation*}
\ind_{\bL \subseteq \bP}^{\bG}(\mathscr{E}^{\sharp}) = (\tau_{\bL \subseteq \bP}^{\bG})_!D \qquad \ind_{\bL \subseteq \tilde{\bP}}^{\bM}(\mathscr{E}^{\sharp}) = (\tau_{\bL \subseteq \tilde{\bP}}^{\bM})_!D'.
\end{equation*}

We have a well-defined equivariant morphism $\lambda : \tilde{X}_{\bL \subseteq \bP}^{\bG} \to \tilde{X}_{\bM \subseteq \bQ}^{\bG}$ given by $\lambda(g,h\bP) = (g,h\bQ)$ because $\bP \leqslant \bQ$. In \cite[4.2(b)]{lusztig:1985:character-sheaves} Lusztig shows that $\tilde{\pi}_{\bM\subseteq\bQ}^{\bG}(\tau_{\bL\subseteq\tilde{\bP}}^{\bM})_!D' = \tilde{\sigma}_{\bM\subseteq\bQ}^{\bG}\lambda_!D$ so, again by definition, we have
\begin{equation*}
\ind_{\bM \subseteq \bQ}^{\bG}(\ind_{\bL \subseteq \tilde{\bP}}^{\bM}(\mathscr{E}^{\sharp})) = (\tau_{\bM\subseteq\bQ}^{\bG})_!\lambda_!D = (\tau_{\bM\subseteq\bQ}^{\bG}\circ \lambda)_!D = (\tau_{\bL\subseteq\bP}^{\bG})_!D = \ind_{\bL \subseteq \bP}^{\bG}(\mathscr{E}^{\sharp})
\end{equation*}
because $\tau_{\bL\subseteq\bP}^{\bG} = \tau_{\bM\subseteq\bQ}^{\bG}\circ \lambda$. Note that $\tilde{\sigma}_{\bM\subseteq\bQ}^{\bG}\lambda_!$ denotes the composition $\tilde{\sigma}_{\bM\subseteq\bQ}^{\bG} \circ \lambda_!$ and $\tilde{\pi}_{\bM\subseteq\bQ}^{\bG}(\tau_{\bL\subseteq\tilde{\bP}}^{\bM})_!$ denotes the compactly supported pushforward of $\tilde{\pi}_{\bM\subseteq\bQ}^{\bG}(\tau_{\bL\subseteq\tilde{\bP}}^{\bM})$.

Now let $\theta$ be an invertible endomorphism of $\mathscr{E}^{\sharp}$. If $f$ and $f'$ denote the unique morphisms satisfying $\tilde{\pi}_{\bL\subseteq\bP}^{\bG}\theta = \tilde{\sigma}_{\bL\subseteq\bP}^{\bG}f$ and $\tilde{\pi}_{\bL\subseteq\tilde{\bP}}^{\bM}\theta = \tilde{\sigma}_{\bL\subseteq\tilde{\bP}}^{\bM}f'$ then by definition
\begin{equation*}
\ind_{\bL \subseteq \bP}^{\bG}(\theta) = (\tau_{\bL\subseteq\bP}^{\bG})_!f \qquad \ind_{\bL \subseteq \tilde{\bP}}^{\bM}(\theta) = (\tau_{\bL\subseteq\tilde{\bP}}^{\bM})_!f'.
\end{equation*}
An identical argument to that used by Lusztig shows that $\tilde{\pi}_{\bM\subseteq\bQ}^{\bG}(\tau_{\bL\subseteq\tilde{\bP}}^{\bM})_!f' = \tilde{\sigma}_{\bM\subseteq\bQ}^{\bG}\lambda_!f$ and identically we get that $\ind_{\bM \subseteq \bQ}^{\bG}(\ind_{\bL \subseteq \tilde{\bP}}^{\bM}(\theta)) = \ind_{\bL \subseteq \bP}^{\bG}(\theta)$. The statement now follows from \cref{lem:compatible-endomorphisms}.
\end{proof}

\begin{cor}\label{prop:comparison-theorem}
Assume $\bM \leqslant \bG$ is a Levi subgroup and $(\bL,\Sigma,\mathscr{E}) \in \Cusp(\bM)$ is a cuspidal triple. For any irreducible $\alpha$-characters $\mu \in \Irr_{\alpha}(W_{\bM}(\bL,\Sigma,\mathscr{E}))$ and $\lambda \in \Irr_{\alpha}(W_{\bG}(\bL,\Sigma,\mathscr{E}))$ we have
\begin{equation*}
\dim_{\Ql} \Hom_{\mathscr{M}_{\bG}(\bG)}(K_{\lambda}^{\bG},\ind_{\bM\subseteq \bQ}^{\bG}(K_{\mu}^{\bM})) = \langle \lambda, \Ind_{W_{\bM}(\bL,\Sigma,\mathscr{E})}^{W_{\bG}(\bL,\Sigma,\mathscr{E})}(\mu)\rangle_{W_{\bG}(\bL,\Sigma,\mathscr{E})}.
\end{equation*}
In particular, we have $K_{\lambda}^{\bG} \mid \ind_{\bM\subseteq \bQ}^{\bG}(K_{\mu}^{\bM})$ if and only if $\langle \lambda, \Ind_{W_{\bM}(\bL,\Sigma,\mathscr{E})}^{W_{\bG}(\bL,\Sigma,\mathscr{E})}(\mu)\rangle_{W_{\bG}(\bL,\Sigma,\mathscr{E})} \neq 0$.
\end{cor}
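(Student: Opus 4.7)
Both $K_\lambda^{\bG}$ and $\ind_{\bM\subseteq\bQ}^{\bG}(K_\mu^{\bM})$ are summands of the semisimple perverse sheaf $K_{\bL,\Sigma,\mathscr{E}}^{\bG}$: the former by the parameterisation in \cref{pa:HC-param-char-sheaves}, and the latter via the canonical isomorphism $\ind_{\bM\subseteq\bQ}^{\bG}(K_{\bL,\Sigma,\mathscr{E}}^{\bM}) \cong K_{\bL,\Sigma,\mathscr{E}}^{\bG}$ provided by \eqref{eq:iso-ind} and \cref{thm:Lusztig-iso}. My plan is to transport the Hom computation into the module category of $\mathcal{A}_{\bL,\Sigma,\mathscr{E}}^{\bG}$ via the functor $\mathfrak{F}_{\bL,\Sigma,\mathscr{E}}^{\bG}$, identify the image of $\ind_{\bM\subseteq\bQ}^{\bG}(K_\mu^{\bM})$ as an induced module, and conclude by Frobenius reciprocity. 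For the first reduction, the general formalism of \cref{sec:decomp-semisimple-obj} applied to $K_{\bL,\Sigma,\mathscr{E}}^{\bG}$ yields a $\Ql$-linear isomorphism
\[
\Hom_{\mathscr{M}_{\bG}(\bG)}\bigl(K_\lambda^{\bG},\ind_{\bM\subseteq\bQ}^{\bG}(K_\mu^{\bM})\bigr) \cong \Hom_{\mathcal{A}_{\bL,\Sigma,\mathscr{E}}^{\bG}}\bigl(\mathfrak{F}_{\bL,\Sigma,\mathscr{E}}^{\bG}(\ind_{\bM\subseteq\bQ}^{\bG}(K_\mu^{\bM})),\mathfrak{F}_{\bL,\Sigma,\mathscr{E}}^{\bG}(K_\lambda^{\bG})\bigr),
\]
and under $(\mathcal{P}_{\bL,\Sigma,\mathscr{E}}^{\bG})$ the right-hand side becomes a Hom space of $\Ql[W_{\bG}(\bL,\Sigma,\mathscr{E})]$-modules, with $\mathfrak{F}_{\bL,\Sigma,\mathscr{E}}^{\bG}(K_\lambda^{\bG})$ affording $\lambda$ by construction.

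The key step is to identify $\mathfrak{F}_{\bL,\Sigma,\mathscr{E}}^{\bG}(\ind_{\bM\subseteq\bQ}^{\bG}(K_\mu^{\bM}))$ as the induced $\Ql[W_{\bG}(\bL,\Sigma,\mathscr{E})]$-module of $\mu$. Functoriality of $\ind_{\bM\subseteq\bQ}^{\bG}$ on Hom spaces, combined with the fixed isomorphism $\ind_{\bM\subseteq\bQ}^{\bG}(K_{\bL,\Sigma,\mathscr{E}}^{\bM}) \cong K_{\bL,\Sigma,\mathscr{E}}^{\bG}$, produces a $\Ql$-linear map $\mathfrak{F}_{\bL,\Sigma,\mathscr{E}}^{\bM}(K_\mu^{\bM}) \to \mathfrak{F}_{\bL,\Sigma,\mathscr{E}}^{\bG}(\ind_{\bM\subseteq\bQ}^{\bG}(K_\mu^{\bM}))$ intertwining with respect to the algebra homomorphism $\mathcal{A}_{\bL,\Sigma,\mathscr{E}}^{\bM} \to \mathcal{A}_{\bL,\Sigma,\mathscr{E}}^{\bG}$ induced by $\ind_{\bM\subseteq\bQ}^{\bG}$. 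Its $\mathcal{A}_{\bL,\Sigma,\mathscr{E}}^{\bG}$-linear extension
\[
\mathcal{A}_{\bL,\Sigma,\mathscr{E}}^{\bG} \otimes_{\mathcal{A}_{\bL,\Sigma,\mathscr{E}}^{\bM}} \mathfrak{F}_{\bL,\Sigma,\mathscr{E}}^{\bM}(K_\mu^{\bM}) \longrightarrow \mathfrak{F}_{\bL,\Sigma,\mathscr{E}}^{\bG}(\ind_{\bM\subseteq\bQ}^{\bG}(K_\mu^{\bM}))
\]
should then be an isomorphism of $\mathcal{A}_{\bL,\Sigma,\mathscr{E}}^{\bG}$-modules, which I expect to follow from the abstract machinery in \cref{sec:decomp-semisimple-obj} describing how a $\Ql$-linear functor transforms the parameterisation of summands by modules over the endomorphism algebra. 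Granted this, \cref{lem:end-alg-identifications} together with both $(\mathcal{P}_{\bL,\Sigma,\mathscr{E}}^{\bM})$ and $(\mathcal{P}_{\bL,\Sigma,\mathscr{E}}^{\bG})$ identifies the algebra map with the canonical group-algebra inclusion $\Ql[W_{\bM}(\bL,\Sigma,\mathscr{E})] \hookrightarrow \Ql[W_{\bG}(\bL,\Sigma,\mathscr{E})]$, so the left-hand side is precisely the induced module $\Ind_{W_{\bM}(\bL,\Sigma,\mathscr{E})}^{W_{\bG}(\bL,\Sigma,\mathscr{E})}(\mu)$.

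Combining both steps, the dimension of the original Hom space equals $\dim\Hom_{\Ql[W_{\bG}(\bL,\Sigma,\mathscr{E})]}\bigl(\Ind_{W_{\bM}(\bL,\Sigma,\mathscr{E})}^{W_{\bG}(\bL,\Sigma,\mathscr{E})}(\mu),\lambda\bigr)$, which by Frobenius reciprocity for finite groups is $\langle \lambda, \Ind_{W_{\bM}(\bL,\Sigma,\mathscr{E})}^{W_{\bG}(\bL,\Sigma,\mathscr{E})}(\mu)\rangle_{W_{\bG}(\bL,\Sigma,\mathscr{E})}$, yielding the claimed formula. The main obstacle is the middle step: verifying that $\mathfrak{F}_{\bL,\Sigma,\mathscr{E}}^{\bG}\circ\ind_{\bM\subseteq\bQ}^{\bG}$ corresponds, on the module side, to induction along the algebra homomorphism supplied by \cref{lem:end-alg-identifications}. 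Once that homomorphism has been pinned down as the natural group-algebra inclusion (the content of \cref{lem:end-alg-identifications}, which in turn rests on the compatibility asserted in \cref{lem:compatible-endomorphisms}), the remainder is a formal exercise in endomorphism-algebra parameterisation.
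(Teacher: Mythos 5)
Your proposal is correct and follows essentially the same route as the paper: transport the Hom computation to $\lmod{\mathcal{A}_{\bL,\Sigma,\mathscr{E}}^{\bG}}$ via $\mathfrak{F}_{\bL,\Sigma,\mathscr{E}}^{\bG}$, identify $\mathfrak{F}_{\bL,\Sigma,\mathscr{E}}^{\bG}(\ind_{\bM\subseteq\bQ}^{\bG}(K_\mu^{\bM}))$ with the induced module $\mathcal{A}_{\bL,\Sigma,\mathscr{E}}^{\bG}\otimes_{\mathcal{A}_{\bL,\Sigma,\mathscr{E}}^{\bM}}\mathfrak{F}_{\bL,\Sigma,\mathscr{E}}^{\bM}(K_\mu^{\bM})$ (this is exactly \cref{lem:I-func-is-ind}), invoke \cref{lem:end-alg-identifications} to pin down the algebra map as the group-algebra inclusion, and finish by Frobenius reciprocity. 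The paper simply packages the first two steps into \cref{cor:decomp-ind-semisimple} and works with the opposite-variance Hom, which has the same dimension by semisimplicity; the substance is identical.
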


\begin{proof}
By \cref{cor:decomp-ind-semisimple} we have a $\Ql$-linear isomorphism
\begin{equation*}
\Hom_{\mathscr{M}_{\bG}(\bG)}(\ind_{\bM\subseteq\bQ}^{\bG}(K_{\mu}^{\bM}),K_{\lambda}^{\bG}) \cong \Hom_{\mathcal{A}_{\bL,\Sigma,\mathscr{E}}^{\bG}}(\mathcal{A}_{\bL,\Sigma,\mathscr{E}}^{\bG} \otimes_{\mathcal{A}_{\bL,\Sigma,\mathscr{E}}^{\bM}} \mathfrak{F}_{\bL,\Sigma,\mathscr{E}}^{\bM}(K_{\mu}^{\bM}),\mathfrak{F}_{\bL,\Sigma,\mathscr{E}}^{\bG}(K_{\lambda}^{\bG})).
\end{equation*}
Using \cref{lem:end-alg-identifications,lem:char-ind-module} we see that $\mathcal{A}_{\bL,\Sigma,\mathscr{E}}^{\bG} \otimes_{\mathcal{A}_{\bL,\Sigma,\mathscr{E}}^{\bM}} \mathfrak{F}_{\bL,\Sigma,\mathscr{E}}^{\bM}(K_{\mu}^{\bM})$ affords the induced $\alpha$-character $\Ind_{W_{\bM}(\bL,\Sigma,\mathscr{E})}^{W_{\bG}(\bL,\Sigma,\mathscr{E})}(\mu)$. The statement now follows from the fact that the induced complex is semisimple.
\end{proof}

\section{The Case of a Split Levi Subgroup}\label{sec:split-levi-case}
\begin{pa}
Recall that if $p$ is a good prime for $\bG$ then to any irreducible character $\chi \in \Irr(\bG^F)$ one can associate its wave-front set $\mathcal{O}_{\chi}^* \subseteq \mathcal{U}(\bG)$, which is an $F$-stable unipotent conjugacy class in $\bG$, see \cite[11.2]{lusztig:1992:a-unipotent-support} and \cite[14.10]{taylor:2016:GGGRs-small-characteristics}. If $D_{\bG^F} : \Class(\bG^F) \to \Class(\bG^F)$ denotes Alvis--Curtis duality then we have a bijection ${}^* : \Irr(\bG^F) \to \Irr(\bG^F)$ defined by $\chi^* = \pm D_{\bG^F}(\chi)$. This follows from the fact that $D_{\bG^F}$ is an involutive isometry mapping characters to virtual characters, \cite[\S8]{digne-michel:1991:representations-of-finite-groups-of-lie-type}. By \cite[11.2]{lusztig:1992:a-unipotent-support}, see also \cite[14.15]{taylor:2016:GGGRs-small-characteristics}, the notions of wave-front set and unipotent support are related via the equality
\begin{equation}\label{eq:wave-front-unip-supp}
\mathcal{O}_{\chi^*} = \mathcal{O}_{\chi}^*.
\end{equation}
\end{pa}

\begin{lem}\label{lem:equiv-dual-statements}
Assume $p$ is a good prime for $\bG$. Then for any $F$-stable Levi subgroup $\bM \leqslant \bG$ the following statements are equivalent:
\begin{enumerate}[label=\textnormal{(\roman*)}]
	\item For any irreducible characters $\eta \in \Irr(\bM^F)$ and $\chi \in \Irr(\bG^F)$ satisfying $\langle \chi, R_{\bM}^{\bG}(\eta)\rangle_{\bG^F} \neq 0$ we have ${\mathcal{O}_{\eta}^*} \leqslant \mathcal{O}_{\chi}^*$.
	\item For any irreducible characters $\eta \in \Irr(\bM^F)$ and $\chi \in \Irr(\bG^F)$ satisfying $\langle \chi, R_{\bM}^{\bG}(\eta)\rangle_{\bG^F} \neq 0$ we have $\mathcal{O}_{\eta} \leqslant \mathcal{O}_{\chi}$.
\end{enumerate}
\end{lem}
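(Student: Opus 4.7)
The plan is to exploit the duality $\chi \mapsto \chi^*$, the identity \eqref{eq:wave-front-unip-supp} relating wave-front sets and unipotent supports, and the commutation of Alvis--Curtis duality with Lusztig induction to show that the non-vanishing condition and the class-closure condition are each symmetric under the involution ${}^*$.

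\textbf{Step 1 (involutivity).} First I would verify that $\chi \mapsto \chi^*$ is a well-defined involution on $\Irr(\bG^F)$, and likewise on $\Irr(\bM^F)$. This is immediate from the fact recalled in the excerpt that $D_{\bG^F}$ is an involutive isometry permuting $\pm\Irr(\bG^F)$, so the sign choice defining $\chi^*$ forces $(\chi^*)^* = \chi$.

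\textbf{Step 2 (symmetry of the pairing under ${}^*$).} I would then invoke the standard commutation formula $D_{\bG^F} \circ R_{\bM}^{\bG} = \epsilon\, R_{\bM}^{\bG} \circ D_{\bM^F}$ (with $\epsilon = \pm 1$), which holds in good characteristic; see \cite[\S8]{digne-michel:1991:representations-of-finite-groups-of-lie-type}. Since $D_{\bG^F}$ is an isometry, applying it to the pairing yields
\begin{equation*}
\langle \chi, R_{\bM}^{\bG}(\eta)\rangle_{\bG^F} = \pm \langle \chi^*, R_{\bM}^{\bG}(\eta^*)\rangle_{\bG^F},
\end{equation*}
so the non-vanishing of one side is equivalent to the non-vanishing of the other.

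\textbf{Step 3 (translation via \eqref{eq:wave-front-unip-supp}).} By \eqref{eq:wave-front-unip-supp}, we have $\mathcal{O}_{\chi}^* = \mathcal{O}_{\chi^*}$ for every $\chi \in \Irr(\bG^F)$, and analogously for $\bM^F$. Combined with Step 1, this gives $\mathcal{O}_{\chi^*}^* = \mathcal{O}_{\chi}$.

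\textbf{Step 4 (the two implications).} Assume (i) and suppose $\langle \chi, R_{\bM}^{\bG}(\eta)\rangle_{\bG^F} \neq 0$. By Step 2, $\langle \chi^*, R_{\bM}^{\bG}(\eta^*)\rangle_{\bG^F} \neq 0$, so applying (i) to the pair $(\chi^*, \eta^*)$ yields $\mathcal{O}_{\eta^*}^* \leqslant \mathcal{O}_{\chi^*}^*$, which by Step 3 reads $\mathcal{O}_{\eta} \leqslant \mathcal{O}_{\chi}$. This is (ii). Conversely, assuming (ii) and the same non-vanishing hypothesis, Step 2 again produces $\langle \chi^*, R_{\bM}^{\bG}(\eta^*)\rangle_{\bG^F} \neq 0$; applying (ii) to $(\chi^*, \eta^*)$ gives $\mathcal{O}_{\eta^*} \leqslant \mathcal{O}_{\chi^*}$, and Step 3 converts this to $\mathcal{O}_{\eta}^* \leqslant \mathcal{O}_{\chi}^*$, which is (i).

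There is no real obstacle: the proof is a formal symmetry argument, with all ingredients (the involutivity of ${}^*$, the commutation of $D_{\bG^F}$ with $R_{\bM}^{\bG}$, and the identity \eqref{eq:wave-front-unip-supp}) already cited in the excerpt. The only minor care required is checking that the various sign ambiguities in $\chi^* = \pm D_{\bG^F}(\chi)$ and in the commutation formula do not affect the non-vanishing statement, which is clear because all such signs are just $\pm 1$.
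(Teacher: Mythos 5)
Your proof is correct and follows essentially the same route as the paper: both use the commutation of Alvis--Curtis duality with $R_{\bM}^{\bG}$, the isometry property of $D_{\bG^F}$, the identity $\mathcal{O}_{\chi}^* = \mathcal{O}_{\chi^*}$, and the bijectivity of ${}^*$. The only minor inaccuracy is the citation in Step 2: the commutation $D_{\bG^F}\circ R_{\bM}^{\bG} = \pm R_{\bM}^{\bG}\circ D_{\bM^F}$ for a non-split Levi is not a direct consequence of \cite[\S8]{digne-michel:1991:representations-of-finite-groups-of-lie-type} but requires the Mackey formula (supplied in good characteristic by \cite{bonnafe-michel:2011:mackey-formula}, and then \cite[10.13]{bonnafe:2006:sln}), which is precisely why the good-prime hypothesis appears in the statement.
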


\begin{proof}
As $p$ is a good prime for $\bG$ we have that the Mackey formula holds by \cite{bonnafe-michel:2011:mackey-formula}. Hence $R_{\bM}^{\bG} \circ D_{\bM^F} = D_{\bG^F} \circ R_{\bM}^{\bG}$ by \cite[10.13]{bonnafe:2006:sln}. As $D_{\bG^F}$ is an isometry we get that
\begin{equation*}
\langle \chi,R_{\bM}^{\bG}(\eta)\rangle_{\bG^F} = \langle D_{\bG^F}(\chi),D_{\bG^F}(R_{\bM}^{\bG}(\eta))\rangle_{\bG^F} = \pm \langle \chi^*,R_{\bM}^{\bG}(\eta^*)\rangle_{\bG^F}.
\end{equation*}
The equivalence now follows from \cref{eq:wave-front-unip-supp} and the fact that ${}^*$ is a bijection.
\end{proof}

\begin{pa}\label{pa:isotypic-morphisms}
Recall that a homomorphism of algebraic groups $\iota : \bG \to \widetilde{\bG}$ is called \emph{isotypic} if the image $\iota(\bG)$ contains the derived subgroup of $\widetilde{\bG}$ and the kernel $\Ker(\iota)$ is contained in the centre $Z(\bG)$ of $\bG$. We further assume that $\widetilde{\bG}$ is equipped with a Frobenius endomorphism $F : \widetilde{\bG} \to \widetilde{\bG}$ and $\iota$ is defined over $\mathbb{F}_q$, in the sense that $\iota\circ F = F\circ\iota$. If $\widetilde{\bG}$ is a connected reductive algebraic group then for any Levi subgroup $\bM \leqslant \bG$ of $\bG$ we have $\widetilde{\bM} = \iota(\bM)Z(\widetilde{\bG}) \leqslant \widetilde{\bG}$ is a Levi subgroup of $\widetilde{\bG}$. The assignment $\bM \mapsto \widetilde{\bM}$ is a bijection between Levi subgroups sending $F$-stable Levi subgroups to $F$-stable Levi subgroups and $(\bG,F)$-split Levi subgroups to $(\widetilde{\bG},F)$-split Levi subgroups.
\end{pa}

\begin{pa}\label{pa:reg-embedding}
We now wish to reduce checking the conditions in \cref{lem:equiv-dual-statements} to those situations covered by \cite{bezrukavnikov-liebeck-shalev-tiep:2017:character-bounds-grps-Lie-type}. For this let us recall that an isotypic morphism $\iota : \bG \to \widetilde{\bG}$ is a \emph{regular embedding} if $\widetilde{\bG}$ is a connected reductive algebraic group with $Z(\widetilde{\bG})$ connected and $\iota$ is a closed embedding. Given such a morphism we will implicitly identify $\bG$ with a subgroup of $\widetilde{\bG}$ and identify $\mathcal{U}(\bG)$ with $\mathcal{U}(\widetilde{\bG})$.
\end{pa}

\begin{prop}\label{prop:split-red-to-conn-centre}
Let $\iota : \bG \to \widetilde{\bG}$ be a regular embedding and assume $\bM \leqslant \bG$ and $\widetilde{\bM}\leqslant \widetilde{\bG}$ are corresponding $F$-stable Levi subgroups. If $\bM$ is $(\bG,F)$-split, equivalently $\widetilde{\bM}$ is $(\widetilde{\bG},F)$-split, then the conditions of  \cref{lem:equiv-dual-statements} hold for the pair $(\bM,\bG)$ if and only if they hold for the pair $(\widetilde{\bM},\widetilde{\bG})$.
\end{prop}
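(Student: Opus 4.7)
\medskip

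\noindent\textbf{Proof plan.} The plan is a standard Clifford-theoretic reduction along the regular embedding, built from three ingredients: (a) the identification of unipotent classes and unipotent supports between $\bG$ and $\widetilde{\bG}$; (b) the commutation of Harish-Chandra induction with restriction along the embedding; and (c) the fact that for a split Levi $R_\bM^\bG$ is honest Harish-Chandra induction, so no cancellation occurs. Throughout I identify $\bG$ with its image in $\widetilde{\bG}$, so that $\widetilde{\bG}=\bG\cdot Z(\widetilde{\bG})$, $\widetilde{\bM}=\bM\cdot Z(\widetilde{\bG})$, $\widetilde{\bG}^F=\bG^F Z(\widetilde{\bG})^F$ and $\widetilde{\bM}^F=\bM^F Z(\widetilde{\bG})^F$. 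As conjugation by $Z(\widetilde{\bG})$ is trivial, the unipotent $\bG$- and $\widetilde{\bG}$-conjugacy classes coincide as subsets of $\mathcal{U}(\bG)=\mathcal{U}(\widetilde{\bG})$, and likewise for $\bM$ and $\widetilde{\bM}$.

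For ingredient (a) I will show: if $\widetilde{\chi}\in\Irr(\widetilde{\bG}^F)$ and $\chi\in\Irr(\bG^F)$ is any constituent of $\widetilde{\chi}|_{\bG^F}$, then $\mathcal{O}_\chi^\bG=\mathcal{O}_{\widetilde{\chi}}^{\widetilde{\bG}}$ under the identification above (and similarly for $\bM$, and for wave-front sets). This follows from the characterisation of the unipotent support in good characteristic via maximality of the $F$-stable unipotent class $\mathcal{O}$ with $\sum_{g\in\mathcal{O}^F}\chi(g)\neq 0$: the sum is $\widetilde{\bG}^F$-invariant, so it is constant on the $\widetilde{\bG}^F/\bG^F$-orbit of constituents of $\widetilde{\chi}|_{\bG^F}$, and summing yields $\sum_{g\in\mathcal{O}^F}\widetilde{\chi}(g)=n\sum_{g\in\mathcal{O}^F}\chi(g)$ with $n>0$, so the two supports coincide.

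For ingredient (b), since $\bM$ is $(\bG,F)$-split I choose an $F$-stable parabolic $\bP=\bM\bU\leqslant\bG$ and take $\widetilde{\bP}=\bP\cdot Z(\widetilde{\bG})=\widetilde{\bM}\bU$, an $F$-stable parabolic of $\widetilde{\bG}$ with Levi $\widetilde{\bM}$. Then $R_\bM^\bG$ and $R_{\widetilde{\bM}}^{\widetilde{\bG}}$ are Harish-Chandra induction, and the identity $\widetilde{\bG}^F=\bG^F\widetilde{\bP}^F$ (which holds because $Z(\widetilde{\bG})^F\leqslant\widetilde{\bP}^F$), together with transitivity of induction and inflation, yields
\[
R_{\widetilde{\bM}}^{\widetilde{\bG}}(\widetilde{f})\bigr|_{\bG^F} \;=\; R_\bM^\bG\bigl(\widetilde{f}|_{\bM^F}\bigr) \qquad (\widetilde{f}\in\Class(\widetilde{\bM}^F)).
\]

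With these in place both directions are immediate. Forward: assuming the condition holds for $(\widetilde{\bM},\widetilde{\bG})$, take $\eta\in\Irr(\bM^F)$, $\chi\in\Irr(\bG^F)$ with $\langle\chi,R_\bM^\bG(\eta)\rangle_{\bG^F}>0$, and lift $\eta$ to $\widetilde{\eta}\in\Irr(\widetilde{\bM}^F)$ with $\eta\mid\widetilde{\eta}|_{\bM^F}$. Since Harish-Chandra induction preserves characters (ingredient (c)), $R_\bM^\bG(\eta)$ is a summand of $R_{\widetilde{\bM}}^{\widetilde{\bG}}(\widetilde{\eta})|_{\bG^F}$, hence some $\widetilde{\chi}\in\Irr(\widetilde{\bG}^F\mid R_{\widetilde{\bM}}^{\widetilde{\bG}}(\widetilde{\eta}))$ lies above $\chi$; the hypothesis then gives $\mathcal{O}_{\widetilde{\eta}}\leqslant\mathcal{O}_{\widetilde{\chi}}$, and (a) yields $\mathcal{O}_\eta\leqslant\mathcal{O}_\chi$. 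Reverse: given $\langle\widetilde{\chi},R_{\widetilde{\bM}}^{\widetilde{\bG}}(\widetilde{\eta})\rangle\neq 0$, restrict to $\bG^F$ via the intertwining identity to produce constituents $\chi\mid\widetilde{\chi}|_{\bG^F}$ and $\eta\mid\widetilde{\eta}|_{\bM^F}$ with $\langle\chi,R_\bM^\bG(\eta)\rangle\neq 0$; apply the hypothesis and (a). The main subtlety is ingredient (a)---it requires some care to state precisely but is essentially forced by the $\widetilde{\bG}^F$-invariance of the defining character sums in good characteristic.
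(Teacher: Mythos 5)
Your argument is correct and follows essentially the same route as the paper: both directions are proved by lifting $\eta$ to $\widetilde{\eta}$ (resp.\ restricting), invoking the intertwining identity $\Res_{\bG^F}^{\widetilde{\bG}^F}\circ R_{\widetilde{\bM}}^{\widetilde{\bG}} = R_{\bM}^{\bG}\circ\Res_{\bM^F}^{\widetilde{\bM}^F}$, using that split Levi Deligne--Lusztig induction is Harish--Chandra induction to avoid cancellation, and finishing with the invariance of unipotent supports along a regular embedding. The only cosmetic difference is that you re-derive the two key ingredients in place (the Clifford-theoretic average-value argument for $\mathcal{O}_\chi = \mathcal{O}_{\widetilde{\chi}}$, and the commutation with restriction for Harish--Chandra induction), whereas the paper simply cites \cite[Lem.~5.1]{geck:1996:on-the-average-values} and \cite[10.10]{bonnafe:2006:sln} respectively.
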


\begin{proof}
We first show that if (ii) of \cref{lem:equiv-dual-statements} holds for the pair $(\widetilde{\bM},\widetilde{\bG})$ then it holds for the pair $(\bM,\bG)$. Let $\eta \in \Irr(\bM^F)$ and $\chi \in \Irr(\bG^F)$ be irreducible characters satisfying $\langle \chi, R_{\bM}^{\bG}(\eta)\rangle_{\bG^F} \neq 0$. We choose an irreducible character $\widetilde{\eta} \in \Irr(\widetilde{\bM}^F)$ such that $\Res_{\bG^F}^{\widetilde{\bG}^F}(\widetilde{\eta}) = \eta + \lambda$ with $\lambda \in \Class(\bM^F)$ a \emph{character}.

According to \cite[10.10]{bonnafe:2006:sln} we have $R_{\bM}^{\bG}\circ \Res_{\bM^F}^{\widetilde{\bM}^F} = \Res_{\bG^F}^{\widetilde{\bG}^F} \circ R_{\widetilde{\bM}}^{\widetilde{\bG}}$ which implies that
\begin{equation}\label{eq:sum-of-chars}
\Res_{\bG^F}^{\widetilde{\bG}^F}(R_{\widetilde{\bM}}^{\widetilde{\bG}}(\widetilde{\eta})) = R_{\bM}^{\bG}(\eta) + R_{\bM}^{\bG}(\lambda).
\end{equation}
As $\bM$ is a $(\bG,F)$-split Levi subgroup we have $R_{\bM}^{\bG}$ is Harish-Chandra induction so the sum in \cref{eq:sum-of-chars} is a sum of characters. As $\chi$ is a constituent of $R_{\bM}^{\bG}(\eta)$ it is therefore also a constituent of $\Res_{\bG^F}^{\widetilde{\bG}^F}(R_{\widetilde{\bM}}^{\widetilde{\bG}}(\widetilde{\eta}))$. Hence we have
\begin{equation*}
\langle \chi, \Res_{\bG^F}^{\widetilde{\bG}^F}(R_{\widetilde{\bM}}^{\widetilde{\bG}}(\widetilde{\eta}))\rangle_{\bG^F} = \langle \Ind_{\bG^F}^{\widetilde{\bG}^F}(\chi), R_{\widetilde{\bM}}^{\widetilde{\bG}}(\widetilde{\eta})\rangle_{\bG^F} \neq 0.
\end{equation*}

This implies there exists an irreducible character $\widetilde{\chi} \in \Irr(\widetilde{\bG}^F \mid R_{\widetilde{\bM}}^{\widetilde{\bG}}(\widetilde{\eta}))$ such that $\chi \in \Irr(\bG^F \mid \Res_{\bG^F}^{\widetilde{\bG}^F}(\widetilde{\chi}))$. In particular, we have $\langle \widetilde{\chi},R_{\widetilde{\bM}}^{\widetilde{\bG}}(\widetilde{\eta})\rangle_{\widetilde{\bG}^F} \neq 0$ so $\mathcal{O}_{\widetilde{\eta}} \leqslant \mathcal{O}_{\widetilde{\chi}}$ by assumption. The statement now follows from the fact that $\mathcal{O}_{\chi} = \mathcal{O}_{\widetilde{\chi}}$ and $\mathcal{O}_{\eta} = \mathcal{O}_{\widetilde{\eta}}$, see the proof of \cite[Lem.~5.1]{geck:1996:on-the-average-values}.

Now assume (ii) of \cref{lem:equiv-dual-statements} holds for the pair $(\bM,\bG)$. Let $\widetilde{\eta} \in \Irr(\widetilde{\bM}^F)$ and $\widetilde{\chi} \in \Irr(\widetilde{\bG}^F)$ be irreducible characters satisfying $\langle \widetilde{\chi}, R_{\widetilde{\bM}}^{\widetilde{\bG}}(\widetilde{\eta})\rangle_{\widetilde{\bG}^F} \neq 0$. As $\widetilde{\bM}$ is $(\widetilde{\bG},F)$-split we have $R_{\widetilde{\bM}}^{\widetilde{\bG}}$ is Harish-Chandra induction and there exists a \emph{character} $\widetilde{\lambda} \in \Class(\widetilde{\bG}^F)$ such that $R_{\widetilde{\bM}}^{\widetilde{\bG}}(\widetilde{\eta}) = \widetilde{\chi} + \widetilde{\lambda}$. Restricting we get
\begin{equation*}
R_{\widetilde{\bM}}^{\widetilde{\bG}}(\Res_{\bM^F}^{\widetilde{\bM}^F}(\widetilde{\eta})) = \Res_{\bG^F}^{\widetilde{\bG}^F}(R_{\widetilde{\bM}}^{\widetilde{\bG}}(\widetilde{\eta})) = \Res_{\bG^F}^{\widetilde{\bG}^F}(\widetilde{\chi}) + \Res_{\bG^F}^{\widetilde{\bG}^F}(\widetilde{\lambda}).
\end{equation*}
Let $\chi \in \Irr(\bG^F \mid \Res_{\bG^F}^{\widetilde{\bG}^F}(\widetilde{\chi}))$ be an irreducible constituent of the restriction. Then there must exist an irreducible character $\eta \in \Irr(\bM^F \mid \Res_{\bM^F}^{\widetilde{\bM}^F}(\widetilde{\eta}))$ such that $\langle \chi, R_{\bM}^{\bG}(\eta) \rangle_{\bG^F} \neq 0$. We now conclude that $\mathcal{O}_{\widetilde{\eta}} = \mathcal{O}_{\eta} \leqslant \mathcal{O}_{\chi} = \mathcal{O}_{\widetilde{\chi}}$.
\end{proof}

\begin{lem}\label{lem:red-smooth-covering}
Let $\iota : \widetilde{\bG} \to \bG$ be a surjective isotypic morphism with connected kernel. If $\bM \leqslant \bG$ and $\widetilde{\bM} \leqslant \widetilde{\bG}$ are any corresponding $F$-stable Levi subgroups then the conditions of \cref{lem:equiv-dual-statements} hold for the pair $(\bM,\bG)$ if they hold for the pair $(\widetilde{\bM},\widetilde{\bG})$.
\end{lem}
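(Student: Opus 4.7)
The plan is to transport the hypothesis across $\boldsymbol{\iota}$ via inflation of class functions, and then transport the conclusion back through the induced bijection on unipotent varieties. Let $\mathbf{Z} = \ker(\boldsymbol{\iota})$; by assumption this is a connected central subgroup of $\widetilde{\bG}$, hence a torus. The Lang--Steinberg theorem applied to $\mathbf{Z}$ gives surjective homomorphisms $\widetilde{\bG}^F \twoheadrightarrow \bG^F$ and $\widetilde{\bM}^F \twoheadrightarrow \bM^F$ with common central kernel $\mathbf{Z}^F$. The associated inflation maps $\Class(\bG^F) \hookrightarrow \Class(\widetilde{\bG}^F)$ and $\Class(\bM^F) \hookrightarrow \Class(\widetilde{\bM}^F)$ are then isometric embeddings carrying irreducible characters to irreducible characters.

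Next I would invoke the standard compatibility of Deligne--Lusztig induction with inflation along isotypic morphisms: for any $\eta \in \Class(\bM^F)$, the inflation of $R_{\bM}^{\bG}(\eta)$ to $\widetilde{\bG}^F$ coincides with $R_{\widetilde{\bM}}^{\widetilde{\bG}}$ applied to the inflation of $\eta$; compare \cite{bonnafe:2006:sln}. Combined with the isometry property of inflation this yields
\begin{equation*}
\langle \widetilde{\chi}, R_{\widetilde{\bM}}^{\widetilde{\bG}}(\widetilde{\eta})\rangle_{\widetilde{\bG}^F} = \langle \chi, R_{\bM}^{\bG}(\eta)\rangle_{\bG^F}
\end{equation*}
for any $\chi \in \Irr(\bG^F)$ and $\eta \in \Irr(\bM^F)$, where the tildes denote inflations. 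Hence a non-vanishing inner product on the $\bG$-side forces a non-vanishing inner product on the $\widetilde{\bG}$-side, and the assumed validity of \cref{lem:equiv-dual-statements}(ii) for the pair $(\widetilde{\bM},\widetilde{\bG})$ yields $\mathcal{O}_{\widetilde{\eta}} \leqslant \mathcal{O}_{\widetilde{\chi}}$.

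Finally I would descend this closure relation. Since the torus $\mathbf{Z}$ meets $\mathcal{U}(\widetilde{\bG})$ trivially and since Jordan decomposition forces every lift of a unipotent element of $\bG$ to factor as a unipotent times a central semisimple element of $\mathbf{Z}$, the restriction of $\boldsymbol{\iota}$ is a $\widetilde{\bG}$-equivariant bijection $\mathcal{U}(\widetilde{\bG}) \to \mathcal{U}(\bG)$ respecting the Zariski closure partial order. This bijection sends $\widetilde{\bG}$-conjugacy classes onto $\bG$-conjugacy classes, and analogously for $\widetilde{\bM}$ and $\bM$. Because $\widetilde{\chi}(\widetilde{u}) = \chi(\boldsymbol{\iota}(\widetilde{u}))$ for every $\widetilde{u} \in \mathcal{U}(\widetilde{\bG})^F$, the characterisation of unipotent support through averages of character values, due to Geck--Malle \cite{geck-malle:2000:existence-of-a-unipotent-support}, then forces $\mathcal{O}_{\chi} = \boldsymbol{\iota}(\mathcal{O}_{\widetilde{\chi}})$ and analogously $\mathcal{O}_{\eta} = \boldsymbol{\iota}(\mathcal{O}_{\widetilde{\eta}})$, so $\mathcal{O}_{\eta} \leqslant \mathcal{O}_{\chi}$ as desired. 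I expect the main obstacle to be the compatibility of Lusztig induction with inflation through isotypic morphisms invoked in the second paragraph; the remaining steps are formal consequences of Lang--Steinberg and the centrality of $\mathbf{Z}$.
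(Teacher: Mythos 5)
Your proof is correct and takes essentially the same route as the paper: both pass to $F$-fixed points via Lang--Steinberg, use the isometry of inflation together with its compatibility with Deligne--Lusztig induction (cf.~\cite[13.22]{digne-michel:1991:representations-of-finite-groups-of-lie-type}), and then transport the closure relation back through the homeomorphism $\mathcal{U}(\widetilde{\bG}) \to \mathcal{U}(\bG)$ using that unipotent supports are preserved under $\boldsymbol{\iota}$ (cf.~\cite[Lem.~5.2]{geck:1996:on-the-average-values}). The only difference is cosmetic: the paper cites Geck's lemma directly for $\boldsymbol{\iota}(\mathcal{O}_{\chi\circ\boldsymbol{\iota}}) = \mathcal{O}_{\chi}$ rather than unwinding the average-value characterisation as you do.
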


\begin{proof}
As $\Ker(\iota)$ is connected we have $\iota$ restricts to surjective homomorphisms $\iota : \widetilde{\bG}^F \to \bG^F$ and $\iota : \widetilde{\bM}^F \to \bM^F$ by the Lang--Steinberg theorem. We then have an isometry $\Class(\bG^F) \to \Class(\widetilde{\bG}^F)$ given by $\chi \mapsto \chi \circ \iota$. In particular, for any irreducible characters $\chi \in \Irr(\bG^F)$ and $\eta \in \Irr(\bM^F)$ we have
\begin{equation*}
\langle R_{\bM}^{\bG}(\eta),\chi\rangle_{\bG^F} = \langle R_{\bM}^{\bG}(\eta)\circ\iota,\chi\circ\iota\rangle_{\widetilde{\bG}^F} = \langle R_{\widetilde{\bM}}^{\widetilde{\bG}}(\eta\circ \iota),\chi\circ\iota\rangle_{\widetilde{\bG}^F},
\end{equation*}
where the last equality follows from \cite[13.22]{digne-michel:1991:representations-of-finite-groups-of-lie-type}. Now $\iota$ restricts to a homeomorphism $\mathcal{U}(\widetilde{\bG}) \to \mathcal{U}(\bG)$ between the unipotent varieties and from the proof of \cite[Lem.~5.2]{geck:1996:on-the-average-values} we see that $\iota(\mathcal{O}_{\chi\circ\iota}) = \mathcal{O}_{\chi}$ and $\iota(\mathcal{O}_{\eta\circ\iota}) = \mathcal{O}_{\eta}$. Hence, if (ii) of \cref{lem:equiv-dual-statements} holds for the pair $(\widetilde{\bM},\widetilde{\bG})$ then it holds for the pair $(\bM,\bG)$.
\end{proof}

\begin{thm}[{}{Bezrukavnikov--Liebeck--Shalev--Tiep, \cite[2.6]{bezrukavnikov-liebeck-shalev-tiep:2017:character-bounds-grps-Lie-type}}]\label{thm:BLST}
Assume that $p$ is a good prime for a connected reductive group $\bG$, $F : \bG \to \bG$ is a Frobenius endomorphism and that $\bM \leqslant \bG$ is a $(\bG,F)$-split Levi subgroup. Then the equivalent conditions of \cref{lem:equiv-dual-statements} hold.
\end{thm}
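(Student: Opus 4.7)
The plan is to reduce to the connected-centre case and then exploit Kawanaka's generalised Gelfand--Graev representations (GGGRs) to translate the desired closure relation into a statement about non-vanishing of inner products with genuine characters. First, applying \cref{prop:split-red-to-conn-centre} to a regular embedding $\boldsymbol{\iota} : \bG \hookrightarrow \widetilde{\bG}$, with corresponding $(\widetilde{\bG},F)$-split Levi $\widetilde{\bM} = \boldsymbol{\iota}(\bM)Z(\widetilde{\bG})$, reduces the claim for $(\bM,\bG)$ to the analogous claim for $(\widetilde{\bM},\widetilde{\bG})$; we may therefore assume $Z(\bG)$ is connected. By \cref{lem:equiv-dual-statements} it is then equivalent to prove the dual statement $\mathcal{O}_{\eta^*}\leqslant\mathcal{O}_{\chi^*}$, so we work throughout with wavefront sets.

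Under connected centre and good $p$ the GGGRs $\Gamma_u^{\bG}$, $u\in\mathcal{U}(\bG)^F$, are defined and are \emph{genuine} characters of $\bG^F$. By work of Lusztig (and its extension in small characteristic by the first author) the wavefront set $\mathcal{O}_{\chi^*}$ is characterised as the unique $F$-stable unipotent class of maximal dimension such that $\langle \chi,\Gamma_u^{\bG}\rangle_{\bG^F}\neq 0$ for some $u$ in the class. The main technical ingredient is Kawanaka's compatibility of GGGRs with Harish-Chandra induction: for every $u\in\mathcal{U}(\bM)^F$ one has, as characters,
\begin{equation*}
\Gamma_u^{\bG} = R_{\bM}^{\bG}(\Gamma_u^{\bM}),
\end{equation*}
reflecting the fact that the unipotent subgroup and linear character used to define $\Gamma_u^{\bM}$ embed compatibly in $\bG$ via the parabolic containing $\bM$. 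Combined with Frobenius reciprocity (which for a split Levi is the ordinary adjunction between $R_{\bM}^{\bG}$ and ${}^*R_{\bM}^{\bG}$), this yields
\begin{equation*}
\langle \chi,\Gamma_u^{\bG}\rangle_{\bG^F} = \langle {}^*R_{\bM}^{\bG}(\chi),\Gamma_u^{\bM}\rangle_{\bM^F}.
\end{equation*}

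Given these tools the rest is short. Assume $\langle\chi,R_{\bM}^{\bG}(\eta)\rangle_{\bG^F}\neq 0$, so $\eta$ is a constituent of ${}^*R_{\bM}^{\bG}(\chi)$ with strictly positive multiplicity. Choose $u\in(\mathcal{O}_{\eta^*})^F$ with $\langle\eta,\Gamma_u^{\bM}\rangle_{\bM^F}\neq 0$. Because $\bM$ is $(\bG,F)$-split, Harish-Chandra restriction sends the character $\chi$ to a \emph{genuine} character ${}^*R_{\bM}^{\bG}(\chi)$ of $\bM^F$, so its inner product with the genuine character $\Gamma_u^{\bM}$ is a sum of non-negative integers, and the contribution from $\eta$ is strictly positive. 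Hence $\langle{}^*R_{\bM}^{\bG}(\chi),\Gamma_u^{\bM}\rangle_{\bM^F}>0$, and by the displayed identity $\langle\chi,\Gamma_u^{\bG}\rangle_{\bG^F}\neq 0$. The characterisation of the wavefront set then forces $u^{\bG}\leqslant\mathcal{O}_{\chi^*}$, and letting $u$ range over $\mathcal{O}_{\eta^*}$ gives $\mathcal{O}_{\eta^*}\leqslant\mathcal{O}_{\chi^*}$ as required. The main obstacle is precisely the GGGR identity $\Gamma_u^{\bG}=R_{\bM}^{\bG}(\Gamma_u^{\bM})$ and the positivity argument it enables: both depend crucially on the split hypothesis, explaining why the general case of \cref{thm:main-characters} must be approached indirectly via character sheaves in the remainder of the paper.
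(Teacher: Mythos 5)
The central technical step of your argument---the claimed ``Kawanaka compatibility'' $\Gamma_u^{\bG} = R_{\bM}^{\bG}(\Gamma_u^{\bM})$---is not a theorem and is in fact false. The unipotent subgroup and weighting used to construct $\Gamma_u$ are determined by the weighted Dynkin diagram of $u$ \emph{in the ambient group}, which differs for $\bM$ and for $\bG$, so there is no reason for the $\bM$-construction to sit compatibly inside the $\bG$-construction through the parabolic. A concrete check kills the identity already at $u=1$: there $\Gamma_1^{\bG}$ is the regular character of $\bG^F$, of degree $|\bG^F|$, whereas $R_{\bM}^{\bG}(\Gamma_1^{\bM})(1) = [\bG^F:\bQ^F]\cdot|\bM^F| = |\bG^F|/|\bV^F|$ (with $\bQ = \bM\ltimes\bV$), so the two sides disagree by a factor of $q^{\dim\bV}$. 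Once this identity falls, your deduction that $\langle\chi,\Gamma_u^{\bG}\rangle_{\bG^F}\neq 0$ has no support, and hence neither does the appeal to the GGGR characterisation of $\mathcal{O}_{\chi^*}$. The correct use of adjunction only gives $\langle\chi,R_{\bM}^{\bG}(\Gamma_u^{\bM})\rangle_{\bG^F} = \langle{}^*R_{\bM}^{\bG}(\chi),\Gamma_u^{\bM}\rangle_{\bM^F}$, so one must extract information from the class function $R_{\bM}^{\bG}(\Gamma_u^{\bM})$ directly, without identifying it with a GGGR of $\bG$.

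That is exactly what the paper does, and it is where the proofs diverge. Starting from $\langle\chi,R_{\bM}^{\bG}(\Gamma_u^{\bM})\rangle_{\bG^F}\neq 0$ (your positivity observation for split Levi subgroups is the same), one applies Alvis--Curtis duality together with $R_{\bM}^{\bG}\circ D_{\bM^F} = D_{\bG^F}\circ R_{\bM}^{\bG}$ to get $\langle\chi^*, R_{\bM}^{\bG}(D_{\bM^F}(\Gamma_u^{\bM}))\rangle_{\bG^F}\neq 0$, whence there is a $v\in\bG^F$ with $\chi^*(v)\neq 0$ and $R_{\bM}^{\bG}(D_{\bM^F}(\Gamma_u^{\bM}))(v)\neq 0$. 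Two support theorems then finish: Geck's result \cite[2.4(c)]{geck:1999:character-sheaves-and-GGGRs} forces $v$ to be unipotent with $u^{\bG}\leqslant v^{\bG}$, and the vanishing theorem of Achar--Aubert and Taylor forces $v\in\overline{\mathcal{O}_{\chi^*}}$. These replace your characterisation of the wavefront set by non-vanishing of GGGR pairings, which by itself never sees the Levi. One lesser gap: a single regular embedding to connected centre is not enough; the paper needs the longer chain of reductions (via \cref{lem:red-smooth-covering}, then two more applications of \cref{prop:split-red-to-conn-centre}, down to a simple simply connected group, taking $\GL_n$ in type $A$) precisely so that good $p$ becomes \emph{acceptable} in the sense of \cite[6.1]{taylor:2016:GGGRs-small-characteristics}, which is what licenses the wave-front-set machinery you and the paper both invoke.
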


\begin{proof}
We show that (i) of \cref{lem:equiv-dual-statements} holds. By a result of Asai, there exists a surjective isotypic morphism $\iota : \widetilde{\bG} \to \bG$ such that $\Ker(\iota)$ is connected and $\widetilde{\bG}$ has a simply connected derived subgroup, see \cite[1.21]{taylor:2017:arxiv-structure-of-root-data}. By \cref{lem:red-smooth-covering} we can thus assume that the derived subgroup $[\bG,\bG] \leqslant \bG$ is simply connected. Applying \cref{prop:split-red-to-conn-centre} to a regular embedding $\bG \to \widetilde{\bG}$, which does not change the isomorphism class of the derived subgroup, we can assume $Z(\bG)$ is connected and $[\bG,\bG]$ is still simply connected. 
Now, applying \cref{prop:split-red-to-conn-centre} to the canonical regular embedding $[\bG,\bG] \to \bG$ we can assume that $\bG$ is semisimple and simply connected.

With this assumption we have $\bG = \bG_1 \times \cdots \times \bG_r$ with each $\bG_i \leqslant \bG$ simple and simply connected. The Frobenius endomorphism $F$ permutes the subgroups $\bG_i$ and we may clearly assume that it does so transitively. Using \cite[8.3]{taylor:2017:arxiv-structure-of-root-data} it suffices to consider the case where $\bG$ is simple and simply connected. Moreover, choosing a regular embedding $\bG \to \widetilde{\bG}$ we can assume that $Z(\bG)$ is connected and $[\bG,\bG]$ is simple and simply connected. If $[\bG,\bG] = \SL_n(\mathbb{F})$ then we may, and will, assume that $\bG = \GL_n(\mathbb{F})$.

With this in place we have $p$ is an acceptable prime for $\bG$, in the sense of \cite[6.1]{taylor:2016:GGGRs-small-characteristics}, and the results of Lusztig \cite{lusztig:1992:a-unipotent-support} are available to us, see \cite[13.6]{taylor:2016:GGGRs-small-characteristics}. We may now proceed as in the proof of \cite[2.6]{bezrukavnikov-liebeck-shalev-tiep:2017:character-bounds-grps-Lie-type}.
\end{proof}

\section{Unipotent Supports of Characters and Character Sheaves}\label{sec:unip-supp}
\begin{pa}
Let $\Fam(\bG^{\star},\bT_0^{\star})$ denote the set of all pairs $(s,\mathfrak{C})$ where $s \in \bT_0^{\star}$ is a semisimple element and $\mathfrak{C}$ is a two-sided cell of $W_{\bG^{\star}}(s) := C_{N_{\bG^{\star}}(\bT_0^{\star})}(s)/\bT_0^{\star}$. We refer to the elements of $\Fam(\bG^{\star},\bT_0^{\star})$ as \emph{families}. The Weyl group $W_{\bG^{\star}}$ acts naturally on $\Fam(\bG^{\star},\bT_0^{\star})$ by conjugation and we denote by $[\Fam(\bG^{\star},\bT_0^{\star})]$ the set of orbits under this action. To each family $\mathcal{F} \in \Fam(\bG^{\star},\bT_0^{\star})$ we have a corresponding unipotent class $\mathcal{O_F} \subseteq \mathcal{U}(\bG)/\bG$ of $\bG$, see \cite[10.5]{lusztig:1992:a-unipotent-support} and \cite[12.9]{taylor:2016:GGGRs-small-characteristics}. This assignment is invariant under the action of $W_{\bG^{\star}}$.
\end{pa}

\begin{pa}
The Frobenius defines a permutation of $\Fam(\bG^{\star},\bT_0^{\star})$ and $[\Fam(\bG^{\star},\bT_0^{\star})]$. We denote by $\Fam(\bG^{\star},\bT_0^{\star})^F$ and $[\Fam(\bG^{\star},\bT_0^{\star})]^F$ the respective set of fixed points. By work of Lusztig we have decompositions
\begin{equation*}
\Irr(\bG^F) = \bigsqcup_{\mathcal{F} \in [\Fam(\bG^{\star},\bT_0^{\star})]^F} \mathcal{E}(\bG^F,\mathcal{F}) \quad \text{and} \quad \Irr(\CS(\bG))^F = \bigsqcup_{\mathcal{F} \in [\Fam(\bG^{\star},\bT_0^{\star})]^F} \Irr(\CS(\bG),\mathcal{F}),
\end{equation*}
see \cite[10.6, 11.1]{lusztig:1992:a-unipotent-support}, \cite[16.7]{lusztig:1985:character-sheaves}, and \cite[13.1, 14.7]{taylor:2016:GGGRs-small-characteristics}. We note that if $\mathcal{F} = (s,\mathfrak{C})$ then $\mathcal{E}(\bG^F,\mathcal{F}) \subseteq \mathcal{E}(\bG^F,(s))$, where $\mathcal{E}(\bG^F,(s))$ is the corresponding geometric Lusztig series.
\end{pa}

\begin{pa}\label{pa:weak-Lusztig-conj}
We will need the following compatibility between these decompositions. This property is a consequence of a more precise conjecture of Lusztig which relates the irreducible characters of $\bG^F$ to the characteristic functions of character sheaves. Hence, one may view this as a weak form of Lusztig's conjecture. Lusztig's conjecture has been shown to hold in many important cases but remains open in general.
\begin{enumerate}[leftmargin=1.5cm]
	\item[($\mathcal{W}_{\bG,F}$)] For any family $\mathcal{F} \in [\Fam(\bG^{\star},\bT_0^{\star})]^F$ the subspace of $\Class(\bG^F)$ spanned by $\mathcal{E}(\bG^F,\mathcal{F})$ coincides with that spanned by $\{\mathcal{X}_A \mid A \in \Irr(\CS(\bG),\mathcal{F})\}$.
\end{enumerate}
With this in place we may state the following.
\end{pa}

\begin{lem}\label{lem:equal-supports}
Assume $p$ is a good prime and ($\mathcal{W}_{\bG,F}$) holds. If $A \in \Irr(\CS(\bG))^F$ is an $F$-stable character sheaf and $\chi \in \Irr(\bG^F)$ is an irreducible character satisfying $\langle \chi, \mathcal{X}_A \rangle_{\bG^F} \neq 0$ then $\mathcal{O}_A = \mathcal{O}_{\chi}$.
\end{lem}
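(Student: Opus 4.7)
The plan is to reduce the statement to Shoji's refinement of Lusztig's decomposition, which says that both partitions of $\Irr(\bG^F)$ and $\Irr(\CS(\bG))^F$ indexed by $\Fam[\bG^{\star},\bT_0^{\star}]^F$ are compatible at the level of class functions. Concretely, for each family $\mathcal{F} \in \Fam[\bG^{\star},\bT_0^{\star}]^F$ let $\Class(\bG^F,\mathcal{F}) \subseteq \Class(\bG^F)$ denote the $\Ql$-span of $\mathcal{E}(\bG^F,\mathcal{F})$. The key input to invoke (from Shoji's work under the hypotheses that $p$ is good and $Z(\bG)$ is connected) is the equality of subspaces
\begin{equation*}
\Class(\bG^F,\mathcal{F}) = \langle \mathcal{X}_{A} \mid A \in \Irr(\CS(\bG),\mathcal{F})^F\rangle_{\Ql},
\end{equation*}
which moreover gives an orthogonal decomposition of $\Class(\bG^F)$ as $\mathcal{F}$ varies over $\Fam[\bG^{\star},\bT_0^{\star}]^F$.

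Granting this, I would argue as follows. Suppose $\chi \in \mathcal{E}(\bG^F,\mathcal{F}_1)^F$ and $A \in \Irr(\CS(\bG),\mathcal{F}_2)^F$ with $\langle \chi, \mathcal{X}_A\rangle_{\bG^F}\neq 0$. Then $\chi \in \Class(\bG^F,\mathcal{F}_1)$ and $\mathcal{X}_A \in \Class(\bG^F,\mathcal{F}_2)$ by the identification above; orthogonality of the family decomposition immediately forces $\mathcal{F}_1 = \mathcal{F}_2$. Call this common family $\mathcal{F}$. By the definition of the unipotent support for irreducible characters (Lusztig \cite{lusztig:1992:a-unipotent-support} and Geck--Malle \cite{geck-malle:2000:existence-of-a-unipotent-support}) we have $\mathcal{O}_\chi = \mathcal{O}_{\mathcal{F}}$, and by the definition of the unipotent support for character sheaves (Lusztig, together with the extension to good characteristic in \cite{taylor:2016:GGGRs-small-characteristics}) we have $\mathcal{O}_A = \mathcal{O}_{\mathcal{F}}$. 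Combining yields $\mathcal{O}_A = \mathcal{O}_\chi$.

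The main obstacle is locating the precise form of Shoji's theorem that gives the above equality of subspaces. The theorem should follow from the block-triangular transition matrix between $\{\mathcal{X}_A\}$ and the almost characters, combined with Lusztig's partition of almost characters into families, so in the write-up I would cite \cite{shoji:1995:character-sheaves-and-almost-characters} (the ``character sheaves and almost characters'' results) together with \cite[16.7]{lusztig:1985:character-sheaves} and the family-labelling convention recalled above. The hypotheses $p$ good and $Z(\bG)$ connected are exactly those needed to ensure that (i) Shoji's comparison between almost characters and characteristic functions of character sheaves applies, and (ii) the families labelling $\Irr(\bG^F)$ and $\Irr(\CS(\bG))^F$ are indexed by the same set $\Fam[\bG^{\star},\bT_0^{\star}]^F$, so the proof becomes essentially a bookkeeping matter once these two ingredients are in place.
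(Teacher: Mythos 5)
Your proposal is correct and follows essentially the same route as the paper: invoke Shoji's result that, under the hypotheses that $p$ is good and $Z(\bG)$ is connected, the span of $\mathcal{E}(\bG^F,\mathcal{F})$ coincides with the span of $\{\mathcal{X}_A \mid A \in \Irr(\CS(\bG),\mathcal{F})\}$, deduce via orthogonality that a nonzero inner product forces $\chi$ and $A$ to lie in a common family $\mathcal{F}$, and then read off $\mathcal{O}_\chi = \mathcal{O}_{\mathcal{F}} = \mathcal{O}_A$ from \cite[11.2]{lusztig:1992:a-unipotent-support}, \cite[\S3.C]{geck-malle:2000:existence-of-a-unipotent-support}, \cite[10.7]{lusztig:1992:a-unipotent-support}, and \cite[13.8]{taylor:2016:GGGRs-small-characteristics}. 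The only difference is cosmetic: you make the orthogonality step across families explicit, while the paper leaves it implicit.
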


\begin{proof}
The statement follows from ($\mathcal{W}_{\bG,F}$) and the fact that if $\chi \in \mathcal{E}(\bG^F,\mathcal{F})$ then $\mathcal{O}_{\chi} = \mathcal{O_F}$, see \cite[11.2]{lusztig:1992:a-unipotent-support} and \cite[\S3.C]{geck-malle:2000:existence-of-a-unipotent-support}, and if $A \in \Irr(\CS(\bG),\mathcal{F})$ then $\mathcal{O}_A = \mathcal{O_F}$, see \cite[10.7]{lusztig:1992:a-unipotent-support} and \cite[13.8]{taylor:2016:GGGRs-small-characteristics}.
\end{proof}

\section{Proof of \texorpdfstring{\cref{thm:main-char-sheaves,thm:main-characters}}{main theorems}}\label{sec:proof-of-closure}

\begin{proof}[of \cref{thm:main-char-sheaves}]
Let $\bQ \leqslant \bG$ be an $F_1$-stable parabolic subgroup with $\bM$ an $F_1$-stable Levi complement and let $K = \ind_{\bM\subseteq\bQ}^{\bG}(A)$. By assumption there exist isomorphisms $F_1^*A \to A$ and $F_1^*B \to B$. In particular, we have an induced isomorphism $F_1^*K \to K$ as in \cref{lem:ind-split-Levi}. As $K \in \CS(\bG)$ is semisimple we have by \cref{lem:ind-split-Levi,lem:char-func-semisimple} that
\begin{equation*}
\langle \mathcal{X}_B, R_{\bM\subseteq\bQ}^{\bG}(\mathcal{X}_A) \rangle_{\bG^F} = \Tr(\sigma_B,\mathfrak{F}_K(B)) \neq 0.
\end{equation*}
Here $R_{\bM}^{\bG} = R_{\bM\subseteq\bQ}^{\bG}$ denotes Harish-Chandra induction with respect to the Frobenius endomorphism $F_1$.

As the irreducible characters form an orthonormal basis of the space of class functions we have decompositions
\begin{align*}
\mathcal{X}_A = \sum_{\eta \in \Irr(\bM^{F_1})} \langle \eta, \mathcal{X}_A\rangle_{\bM^{F_1}}\eta, &&  R_{\bM}^{\bG}(\eta) = \sum_{\chi \in \Irr(\bG^{F_1})} \langle \chi, R_{\bM}^{\bG}(\eta)\rangle_{\bG^{F_1}}\chi,
\end{align*}
where $\eta \in \Irr(\bM^{F_1})$. In particular, we have
\begin{equation*}
\langle \mathcal{X}_B, R_{\bM}^{\bG}(\mathcal{X}_A)\rangle_{\bG^{F_1}} = \sum_{\eta \in \Irr(\bM^{F_1})}\sum_{\chi \in \Irr(\bG^{F_1})} \overline{\langle \eta, \mathcal{X}_A\rangle}_{\bM^{F_1}}\cdot\overline{\langle \chi, R_{\bM}^{\bG}(\eta)\rangle}_{\bG^{F_1}}\cdot\langle \mathcal{X}_B, \chi\rangle_{\bG^{F_1}}.
\end{equation*}
As $\langle \mathcal{X}_B, R_{\bM}^{\bG}(\mathcal{X}_A)\rangle_{\bG^{F_1}} \neq 0$ there must exist irreducible characters $\eta \in \Irr(\bM^{F_1})$ and $\chi \in \Irr(\bG^{F_1})$ such that
\begin{equation*}
\langle \eta, \mathcal{X}_A\rangle_{\bM^{F_1}} \neq 0, \qquad \langle \chi, R_{\bM}^{\bG}(\eta)\rangle_{\bG^{F_1}} \neq 0, \qquad\text{and}\qquad \langle \chi,\mathcal{X}_B\rangle_{\bG^{F_1}} \neq 0.
\end{equation*}
By \cref{thm:BLST,lem:equal-supports} we must therefore have $\mathcal{O}_A = \mathcal{O}_{\eta} \leqslant \mathcal{O}_{\chi} = \mathcal{O}_B$.
\end{proof}

\begin{proof}[of \cref{thm:main-characters}]
Interchanging the roles of the two bases in the above argument we get a decomposition
\begin{equation*}
\langle \chi, R_{\bM}^{\bG}(\eta)\rangle_{\bG^F} = \sum_{A \in \Irr(\CS(\bM))^F}\sum_{B \in \Irr(\CS(\bG))^F} \overline{\langle \mathcal{X}_A,\eta\rangle}_{\bM^F} \cdot \overline{\langle \mathcal{X}_B, R_{\bM}^{\bG}(\mathcal{X}_A)\rangle}_{\bG^F} \cdot \langle \chi, \mathcal{X}_B\rangle_{\bG^F}.
\end{equation*}
Hence, if $\langle \chi, R_{\bM}^{\bG}(\eta)\rangle_{\bG^F} \neq 0$ then there exist $F$-stable character sheaves $A \in \Irr(\CS(\bM))^F$ and $B \in \Irr(\CS(\bG))^F$ such that
\begin{equation*}
\langle \eta, \mathcal{X}_A\rangle_{\bL^F} \neq 0, \qquad \langle \mathcal{X}_B, R_{\bM}^{\bG}(\mathcal{X}_A)\rangle_{\bG^F}\neq 0, \qquad\text{and}\qquad \langle \chi, \mathcal{X}_B\rangle_{\bG^F} \neq 0.
\end{equation*}

We can assume that $(\bL,\Sigma,\mathscr{E},\varphi) \in \Cusp(\bM,F)$ is a cuspidal tuple such that $A \in \Irr(\CS(\bM) \mid K_{\bL,\Sigma,\mathscr{E}}^{\bM})^F$. By \cref{prop:bumper-DL-ind-char-sheaves} there exist irreducible $\alpha$-characters $\widetilde{\lambda} \in \Irr_{\alpha}(W_{\bM}(\bL,\Sigma,\mathscr{E})\sd F)$ and $\widetilde{\mu} \in \Irr_{\alpha}(W_{\bG}(\bL,\Sigma,\mathscr{E})\sd F)$ such that $\mathcal{X}_A = \mathscr{R}_{\bL,\Sigma,\mathscr{E},\varphi}^{\bM}(\widetilde{\lambda})$ and $\mathcal{X}_B = \mathscr{R}_{\bL,\Sigma,\mathscr{E},\varphi}^{\bG}(\widetilde{\mu})$ and
\begin{equation*}
0 \neq \langle \mathcal{X}_B, R_{\bM}^{\bG}(\mathcal{X}_A)\rangle = \langle \widetilde{\mu}, \Ind_{W_{\bM}(\bL,\Sigma,\mathscr{E}).F}^{W_{\bG}(\bL,\Sigma,\mathscr{E}).F}(\widetilde{\lambda})\rangle_{W_{\bG}(\bL,\Sigma,\mathscr{E}).F}.
\end{equation*}
Moreover, $\widetilde{\lambda}$ has irreducible restriction $\lambda \in \Irr_{\alpha}(W_{\bM}(\bL,\Sigma,\mathscr{E}))^F$ and $A \cong K_{\lambda}^{\bM}$. Similarly, $\widetilde{\mu}$ has irreducible restriction $\mu \in \Irr_{\alpha}(W_{\bG}(\bL,\Sigma,\mathscr{E}))^F$ and $B \cong K_{\mu}^{\bG}$.

By \cref{lem:coset-ind-reg-ind} we must have $\langle \mu, \Ind_{W_{\bM}(\bL,\Sigma,\mathscr{E})}^{W_{\bG}(\bL,\Sigma,\mathscr{E})}(\lambda)\rangle_{W_{\bG}(\bL,\Sigma,\mathscr{E})} \neq 0$ which implies that $B \mid \ind_{\bM}^{\bG}(A)$ by \cref{prop:comparison-theorem}. However, by assumption we may apply \cref{thm:main-char-sheaves}, with $F_1 = F^n$, and \cref{lem:equal-supports} to get that $\mathcal{O}_{\eta} = \mathcal{O}_A \leqslant \mathcal{O}_B = \mathcal{O}_{\chi}$ so we are done by Lemma \ref{lem:equiv-dual-statements}.
\end{proof}

\section{Bounding the Multiplicities}\label{sec:multiplicities}
\begin{lem}\label{prop:inner-prod-char-sheaves}
Assume ($\mathcal{R}_{\bG,F}$) holds and $\bM \leqslant \bG$ is an $F$-stable Levi subgroup. If $B_1,B_2 \in \Irr(\CS(\bM))^F$ are $F$-stable character sheaves and $(\bL,\Sigma,\mathscr{E}) \in \Cusp(\bM)$ is a cuspidal triple such that the set $\Irr(\CS(\bM) \mid K_{\bL,\Sigma,\mathscr{E}}^{\bM})$ contains either $B_1$ or $B_2$ then we have
\begin{equation*}
|\langle R_{\bM}^{\bG}(\mathcal{X}_{B_1}), R_{\bM}^{\bG}(\mathcal{X}_{B_2})\rangle_{\bG^F}| \leqslant |W_{\bG}(\bL,\Sigma,\mathscr{E})|.
\end{equation*}
\end{lem}

\begin{proof}
Let us fix a cuspidal tuple $(\bL_i,\Sigma_i,\mathscr{E}_i,\varphi_i) \in \Cusp(\bM,F)$ such that $B_i \in \Irr(\CS(\bM)^F \mid K_{\bL_i,\Sigma_i,\mathscr{E}_i}^{\bM})$. After \cref{prop:bumper-DL-ind-char-sheaves} we have $\mathcal{X}_{B_i} = \mathcal{R}_{\bL_i,\Sigma_i,\mathscr{E}_i,\varphi_i}^{\bM}(\widetilde{\lambda}_i)$ for some irreducible $\alpha$-character $\widetilde{\lambda}_i \in \Irr_{\alpha}(W_{\bM}(\bL_i,\Sigma_i,\mathscr{E}_i)\sd F \downarrow W_{\bM}(\bL_i,\Sigma_i,\mathscr{E}_i))$. If $\langle R_{\bM}^{\bG}(\mathcal{X}_{B_1}), R_{\bM}^{\bG}(\mathcal{X}_{B_2})\rangle_{\bG^F}$ is zero then there is nothing to show so we will assume that this inner product is non-zero. If this is the case then we must have the tuples $(\bL_1,\Sigma_1,\mathscr{E}_1)$ and $(\bL_2,\Sigma_2,\mathscr{E}_2)$ are in the same $\bG$-orbit. We fix a representative $(\bL,\Sigma,\mathscr{E}) \in \Cusp(\bM)^F$ of that $\bG$-orbit.

For brevity let us set $\mathcal{W}_{\bG} = W_{\bG}(\bL,\Sigma,\mathscr{E})$ and $\mathcal{W}_{\bM} = W_{\bM}(\bL,\Sigma,\mathscr{E})$. There exists an element $g_i \in \bG$ such that $(\bL_i,\Sigma_i,\mathscr{E}_i) = (\boldsymbol{\iota}_{g_i}^{-1}(\bL),\boldsymbol{\iota}_{g_i}^{-1}(\Sigma),\boldsymbol{\iota}_{g_i}^*(\mathscr{E}))$. As all the triples are $F$-stable we must have $F(g_i)g_i^{-1} \in N_{\bG}(\bL,\Sigma,\mathscr{E})$ represents an element $w_i^{-1} \in \mathcal{W}_{\bG}$. Conjugating by $g_i$ identifies the pair $(W_{\bM}(\bL_i,\Sigma_i,\mathscr{E}_i),F)$ with $(\mathcal{W}_{\bM},\boldsymbol{\iota}_{w_i}F)$. Identifying $\widetilde{\lambda}_i$ as an irreducible character of $\mathcal{W}_{\bM} \sd w_iF$, c.f., \cref{sec:coset-multiplicities}, we get that
\begin{equation*}
\langle R_{\bM}^{\bG}(\mathcal{X}_{B_1}), R_{\bM}^{\bG}(\mathcal{X}_{B_2})\rangle_{\bG^F} = \langle \Ind_{\mathcal{W}_{\bM}.w_1F}^{\mathcal{W}_{\bG}.F}(\widetilde{\lambda}_1),\Ind_{\mathcal{W}_{\bM}.w_2F}^{\mathcal{W}_{\bG}.F}(\widetilde{\lambda}_2)\rangle_{\mathcal{W}_{\bG}.F}.
\end{equation*}
We now apply \cref{lem:triv-bound-inner-prod}.
\end{proof}

\begin{prop}\label{prop:multiplicities}
Assume ($\mathcal{R}_{\bG,F}$) and ($\mathcal{W}_{\bG,F}$) hold then for any irreducible character $\eta \in \Irr(\bM^F)$ we have
\begin{equation*}
|\langle R_{\bM}^{\bG}(\eta), R_{\bM}^{\bG}(\eta)\rangle_{\bG^F}| \leqslant B(\bM)^4\cdot |W_{\bG}| \leqslant B(\bG)^4\cdot |W_{\bG}|.
\end{equation*}
\end{prop}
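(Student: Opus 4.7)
The plan is to expand $\eta$ in the orthonormal basis of characteristic functions of character sheaves lying in the same family as $\eta$, and then apply \cref{prop:inner-prod-char-sheaves} term by term to the resulting bilinear expression. Let $\mathcal{F} \in \Fam[\bM^\star,\bT_0^\star]^F$ be the unique $F$-stable family with $\eta \in \mathcal{E}(\bM^F,\mathcal{F})$. By Shoji's theorem, as used in the proof of \cref{lem:equal-supports}, the subspace of $\Class(\bM^F)$ spanned by $\mathcal{E}(\bM^F,\mathcal{F})$ coincides with the subspace spanned by $\{\mathcal{X}_A \mid A \in \Irr(\CS(\bM),\mathcal{F})^F\}$. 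Both being orthonormal bases of this subspace, the change of basis is unitary, so I may write
\[
\eta = \sum_{A \in \Irr(\CS(\bM),\mathcal{F})^F} c_A\,\mathcal{X}_A,\qquad |c_A| \leqslant 1.
\]

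By Lusztig's classification of character sheaves, every family of character sheaves on $\bM$ is contained in a single Harish-Chandra series (families correspond to two-sided cells inside the relative Weyl group attached to a cuspidal datum), so there is a cuspidal triple $(\bL,\Sigma,\mathscr{E}) \in \Cusp(\bM)$ such that every summand $A$ above lies in $\Irr(\CS(\bM) \mid K_{\bL,\Sigma,\mathscr{E}}^{\bM})^F$. Expanding the inner product bilinearly and applying \cref{prop:inner-prod-char-sheaves} term by term then yields
\[
|\langle R_{\bM}^{\bG}(\eta), R_{\bM}^{\bG}(\eta)\rangle_{\bG^F}| \leqslant \sum_{A,A'} |c_A|\,|c_{A'}|\,|W_{\bG}(\bL,\Sigma,\mathscr{E})| \leqslant N^2 \cdot |W_{\bG}|,
\]
where $N = |\Irr(\CS(\bM),\mathcal{F})^F|$ and the last inequality uses $|W_{\bG}(\bL,\Sigma,\mathscr{E})| \leqslant |W_{\bG}|$ together with $|c_A|,|c_{A'}| \leqslant 1$.

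It remains to show $N \leqslant B(\bM)^2$. By Lusztig's parametrisation, $\Irr(\CS(\bM),\mathcal{F})$ is in bijection with the set $\mathcal{M}(\mathcal{G}_{\mathcal{F}})$ of pairs $(x,\sigma)$ up to $\mathcal{G}_{\mathcal{F}}$-conjugation, for a certain finite group $\mathcal{G}_{\mathcal{F}}$ attached to the family, and a direct count gives $|\mathcal{M}(\mathcal{G}_{\mathcal{F}})| \leqslant |\mathcal{G}_{\mathcal{F}}|^2$. The group $\mathcal{G}_{\mathcal{F}}$ is a subquotient of the component group $A_{\bM}(u)$ for $u$ in the unipotent class $\mathcal{O}_{\mathcal{F}}$ attached to $\mathcal{F}$; passing to the simply connected cover of $[\bM,\bM]$ bounds this by $B(\bM)$, so $N \leqslant B(\bM)^2$. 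Combining this with the preceding display proves the proposition. The main obstacle is the translation from the single irreducible character $\eta$ to its expansion in characteristic functions of character sheaves belonging to a common cuspidal Harish-Chandra series; once Shoji's theorem supplies that translation, the structural bound on $\mathcal{G}_{\mathcal{F}}$ and the inner product estimate of \cref{prop:inner-prod-char-sheaves} combine essentially formally to yield the claimed bound.
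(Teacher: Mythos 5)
Your proposal follows the same overall architecture as the paper's proof---expand $\eta$ in characteristic functions supported on the family $\mathcal{F}$, note $|c_A|\leqslant 1$, bound the number of nonzero terms by $|\mathcal{G}_{\mathcal{F}}|^2 \leqslant B(\bM)^2$, and apply \cref{prop:inner-prod-char-sheaves} pairwise. However, the intermediate claim ``every family of character sheaves on $\bM$ is contained in a single Harish--Chandra series, because families correspond to two-sided cells inside the relative Weyl group attached to a cuspidal datum'' is incorrect. Families are indexed, per the definition recalled in \cref{sec:unip-supp}, by pairs $(s,\mathfrak{C})$ with $\mathfrak{C}$ a two-sided cell of the stabiliser $W_{\bM^{\star}}(s)$ in the \emph{dual} Weyl group, not of a relative Weyl group of a cuspidal datum; and a single family can and does meet several Harish--Chandra series. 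For instance, in the unipotent case for a group of type $C_2$ the middle family (with $\mathcal{G}_{\mathcal{F}} = \mathbb{Z}/2$) contains three principal-series character sheaves coming from $\Cusp$-datum $(\bT_0,\dots)$ and one cuspidal character sheaf, so no single $(\bL,\Sigma,\mathscr{E})$ captures all of them.

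Fortunately this flaw is repairable without changing anything else: \cref{prop:inner-prod-char-sheaves} only requires, for each pair $(A,A')$, a cuspidal triple whose series contains at least one of $A$ or $A'$, and every character sheaf lies in some such series. So you may apply \cref{prop:inner-prod-char-sheaves} term by term with possibly varying cuspidal triples, and the uniform bound $|W_{\bG}(\bL,\Sigma,\mathscr{E})| \leqslant |W_{\bG}|$ gives the same estimate. You should simply delete the false claim and note that the bound holds pairwise. Two smaller remarks: what you actually need to count is $|\Irr(\CS(\bM),\mathcal{F})^F|$, which Lusztig parameterises by $\overline{\mathcal{M}}(\mathcal{G}_{\mathcal{F}},\phi)$ (involving the coset by an automorphism $\phi$, as in \cref{sec:coset-multiplicities}) rather than $\mathcal{M}(\mathcal{G}_{\mathcal{F}})$, although both have order at most $|\mathcal{G}_{\mathcal{F}}|^2$; and your route to $|\mathcal{G}_{\mathcal{F}}| \leqslant B(\bM)$ via $A_{\bM}(u)$ with $u \in \mathcal{O}_{\mathcal{F}}$ differs from the paper, which works in the dual group with $A_{\bM^{\star}}(g)$ for a mixed (non-unipotent) element $g$ and then reduces to the simply connected cover of the semisimplification of $C^{\circ}_{\bM^{\star}}(g_{\sss})$. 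Your variant is plausible given Lusztig's results relating $\mathcal{G}_{\mathcal{F}}$ to the component group at the unipotent support, but the dual-group route in the paper rests more directly on the citations given.
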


\begin{proof}
Expanding $\eta$ in terms of characteristic functions of character sheaves we obtain a decomposition
\begin{equation*}
\langle R_{\bM}^{\bG}(\eta), R_{\bM}^{\bG}(\eta)\rangle_{\bG^F} = \sum_{B_1,B_2 \in \Irr(\CS(\bM))^F)}\langle \mathcal{X}_{B_1},\eta \rangle_{\bM^F}\cdot\overline{\langle \mathcal{X}_{B_2}, \eta\rangle}_{\bM^F}\cdot\langle R_{\bM}^{\bG}(\mathcal{X}_{B_1}), R_{\bM}^{\bG}(\mathcal{X}_{B_2})\rangle_{\bG^F}.
\end{equation*}
Let us assume that $\mathcal{F} \in \Fam(\bM^{\star},\bT_0^{\star})^F$ is a family such that $\eta \in \mathcal{E}(\bM^F,\mathcal{F})$, c.f., the proof of \cref{lem:equal-supports}. Moreover, let us denote by $\Class(\bM^F,\mathcal{F}) \subseteq \Class(\bM^F)$ the subspace spanned by the irreducible characters in $\mathcal{E}(\bM^F,\mathcal{F})$.

Note that for any character sheaf $B \in \Irr(\CS(\bM)^F)$ we have
\begin{equation*}
1 = \langle \mathcal{X}_B,\mathcal{X}_B\rangle_{\bM^F} = \sum_{\eta' \in \Irr(\bM^F)} \langle \eta',\mathcal{X}_B\rangle_{\bM^F}\cdot\overline{\langle \eta',\mathcal{X}_B\rangle}_{\bM^F} = \sum_{\eta' \in \Irr(\bM^F)} |\langle \eta',\mathcal{X}_B\rangle_{\bM^F}|
\end{equation*}
by the orthonormality of the irreducible characters of $\bM^F$ and the characteristic functions of the character sheaves, c.f., \cref{thm:char-funcs-as-basis}. This implies that $|\langle \eta,\mathcal{X}_{B_i}\rangle| \leqslant 1$.

In \cite[Chapter 4]{lusztig:1984:characters-of-reductive-groups} and \cite{lusztig:1984:characters-of-reductive-groups-over-finite-fields}, see also \cite{lusztig:2018:on-the-definitions-of-almost-characters}, Lusztig has associated to the family $\mathcal{F}$ a pair $(\mathcal{G_F},\phi)$ consisting of a finite group $\mathcal{G_F}$ and an automorphism $\phi \in \Aut(\mathcal{G_F})$. Moreover, he has defined a corresponding set $\overline{\mathcal{M}}(\mathcal{G_F},\phi)$ consisting of pairs $(x,\sigma)$, where $x \in \mathcal{G_F}.\phi$ and $\sigma \in \Irr(C_{\mathcal{G}_F}(x))$, taken up to equivalence modulo the action of $\mathcal{G_F}\sd\phi$ defined by $g\cdot(x,\sigma) = ({}^gx,\sigma\circ\boldsymbol{\iota}_g^{-1})$. Note that $\mathcal{G_F}.\phi$ is a coset as in \cref{sec:coset-multiplicities} and $C_{\mathcal{G}_F}(x)$ denotes the stabiliser of $x$ under the natural conjugation action of $\mathcal{G_F}$ on $\mathcal{G_F}.\phi$. The main result of \cite{lusztig:1984:characters-of-reductive-groups} and \cite{lusztig:1988:reductive-groups-with-a-disconnected-centre}, see also \cite{lusztig:1984:characters-of-reductive-groups-over-finite-fields}, shows that there is a bijection
\begin{equation*}
\overline{\mathcal{M}}(\mathcal{G_F},\phi) \to \mathcal{E}(\bG^F,\mathcal{F}).
\end{equation*}

As we assume ($\mathcal{W}_{\bG,F}$) holds it must be the case that if $B \in \Irr(\CS(\bM)^F)$ is a character sheaf satisfying $\langle \eta,\mathcal{X}_B\rangle \neq 0$ then $\mathcal{X}_B \in \Class(\bG^F,\mathcal{F})$. Moreover, as the characteristic functions form a basis there can be at most
\begin{equation*}
\dim (\Class(\bG^F,\mathcal{F})) = |\overline{\mathcal{M}}(\mathcal{G_F},\phi)| \leqslant |\mathcal{G_F}|^2
\end{equation*}
character sheaves satisfying $\langle \eta,\mathcal{X}_B\rangle \neq 0$. Combining this with \cref{prop:inner-prod-char-sheaves} we get that
\begin{equation*}
\langle R_{\bM}^{\bG}(\eta), R_{\bM}^{\bG}(\eta)\rangle \leqslant |\mathcal{G_F}|^4\cdot |W_{\bG}(\bL,\Sigma,\mathscr{E})|
\end{equation*}
where $(\bL,\Sigma,\mathscr{E}) \in \Cusp(\bM)$ is a tuple as in the statement of \cref{prop:inner-prod-char-sheaves}.

Now $W_{\bG}(\bL,\Sigma,\mathscr{E})$ is a subgroup of the relative Weyl group $W_{\bG}(\bL)$ which may be identified with a section of the Weyl group $W_{\bG}$. In particular we have $|W_{\bG}(\bL,\Sigma,\mathscr{E})| \leqslant |W_{\bG}|$. To prove the first stated inequality it suffices to show that $|\mathcal{G_F}| \leqslant |B(\bM)|$. It is known that there exists a (special) element $g \in \bM^{\star}$ in a group dual to $\bM$ such that the group $\mathcal{G_F}$ can be identified with a quotient of the component group $A_{\bM^{\star}}(g) = C_{\bM^{\star}}(g)/C_{\bM^{\star}}^{\circ}(g)$, see \cite[Chapter 13]{lusztig:1984:characters-of-reductive-groups}, \cite{lusztig:1984:characters-of-reductive-groups-over-finite-fields}, and \cite{lusztig:2014:families-and-springers-correspondence}.

If $\bL = C_{\bM^{\star}}(s)$ then $u \in \bL$ and we have $A_{\bL}(u) \cong A_{\bM^{\star}}(g)$. Now we have an injection $A_{\bL^{\circ}}(u) \to A_{\bL}(u)$ because $C_{\bL^{\circ}}^{\circ}(u) = C_{\bL}^{\circ}(u)$. Indeed $C_{\bL}^{\circ}(u) \leqslant C_{\bL}(u) \cap \bL^{\circ} = C_{\bL^{\circ}}(u)$ and the reverse inclusion is clear. Hence, we have an injection
\begin{equation*}
A_{\bL}(u)/A_{\bL^{\circ}}(u) \cong C_{\bL}(u)/C_{\bL^{\circ}}(u) \hookrightarrow \bL/\bL^{\circ}.
\end{equation*}
This means that $|A_{\bL}(u)| \leqslant |A_{\bL^{\circ}}(u)|\cdot |\bL/\bL^{\circ}|$. After \cite[13.14(iii)]{digne-michel:1991:representations-of-finite-groups-of-lie-type} we have $|\bL/\bL^{\circ}| \leqslant |Z(\bM)/Z^{\circ}(\bM)|$.

Now let $\pi : \bH \to \bL^{\circ}$ be a simply connected cover of the derived subgroup of $\bL^{\circ}$. There exists a unique unipotent element $v \in \bH$ such that $\pi(v) = u$. The map $\pi$ defines a surjective homomorphism $A_{\bH}(v) \to A_{\bL^{\circ}}(u)$. Putting things together we have shown that $|\mathcal{G_F}| \leqslant |A_{\bH}(v)|\cdot |Z(\bM)/Z^{\circ}(\bM)| \leqslant B(\bM)$ as desired.

Finally, by \cite[4.2]{bonnafe:2006:sln} we have $|Z(\bM)/Z^{\circ}(\bM)| \leqslant |Z(\bG)/Z^{\circ}(\bG)|$ so $B(\bM) \leqslant B(\bG)$ by definition.
\end{proof}

Next we record a lemma that is used in the proof of \cref{thm:character-bound}, which is essentially observed in \cite[12.22]{digne-michel:1991:representations-of-finite-groups-of-lie-type} and \cite[\S25.A]{bonnafe:2006:sln}. We include a proof of this result for the convenience of the reader.

\begin{lem}\label{lem:restrict-equal-val}
Let $\bM \leqslant \bG$ be an $F$-stable Levi subgroup. If $g \in \bM^F$ is any element satisfying $C_{\bG}^{\circ}(g) \leqslant \bM$ then $\chi(g) = {}^*R_{\bM}^{\bG}(\chi)(g)$ for any irreducible character $\chi \in \Irr(\bG^F)$.
\end{lem}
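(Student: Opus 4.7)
The plan is to reduce the identity to a statement about Lusztig \emph{induction} applied to the indicator of a conjugacy class. For a finite group $H$ and $h\in H$, let $\pi_h^H \in \Class(H)$ be defined by $\pi_h^H(x) = |C_H(h)|$ if $x \in h^H$ and $0$ otherwise, so that $\langle \pi_h^H, f\rangle_H = f(h)$ for every class function $f$. By Frobenius reciprocity for Lusztig induction and restriction we then have
\begin{equation*}
{}^*R_{\bM}^{\bG}(\chi)(g) = \langle \pi_g^{\bM^F}, {}^*R_{\bM}^{\bG}(\chi)\rangle_{\bM^F} = \langle R_{\bM}^{\bG}(\pi_g^{\bM^F}),\chi\rangle_{\bG^F},
\end{equation*}
while $\chi(g) = \langle \pi_g^{\bG^F},\chi\rangle_{\bG^F}$. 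As $\Irr(\bG^F)$ is an orthonormal basis of $\Class(\bG^F)$, it suffices to establish the identity $R_{\bM}^{\bG}(\pi_g^{\bM^F}) = \pi_g^{\bG^F}$ of class functions on $\bG^F$.

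Next I would evaluate $R_{\bM}^{\bG}(\pi_g^{\bM^F})$ at an arbitrary $y \in \bG^F$ using the Digne--Lusztig character formula (see, e.g., \cite[Prop.~12.2]{digne-michel:1991:representations-of-finite-groups-of-lie-type}). Writing $y = tw$ and $g = su$ in Jordan decomposition, with $s$ semisimple and $u$ unipotent both lying in $\bM^F$, the formula expresses the value as a sum over $h \in \bG^F$ with $h^{-1}th \in \bM$ and over $v \in \mathcal{U}(C_{\bG}^{\circ}(t))^F$, weighted by a two-variable Green function $Q^{C_{\bG}^{\circ}(t)}_{{}^hC_{\bM}^{\circ}(h^{-1}th)}$ and by $\pi_g^{\bM^F}(h^{-1}t v h)$. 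The indicator forces $h^{-1}t v h$ to be $\bM^F$-conjugate to $g$; in particular $t$ must be $\bG^F$-conjugate to $s$ and so, up to replacing $y$ inside its $\bG^F$-class, we may assume $t = s$.

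The centraliser hypothesis $C_{\bG}^{\circ}(g)\leqslant\bM$ now enters through the identity $C_{\bG}^{\circ}(g) = C^{\circ}_{C_{\bG}^{\circ}(s)}(u)$, which follows from $C_{\bG}(g) = C_{C_{\bG}(s)}(u)$ and the openness of $C_{\bG}^{\circ}(s)$ in $C_{\bG}(s)$. This gives $C^{\circ}_{C_{\bG}^{\circ}(s)}(u) \leqslant C_{\bM}^{\circ}(s)$, so the $C_{\bG}^{\circ}(s)^F$-conjugacy class of $u$ meets $C_{\bM}^{\circ}(s)^F$ in exactly one $C_{\bM}^{\circ}(s)^F$-class, namely that of $u$. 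Under this regularity condition the inner sum collapses: the Green function contribution computes $R^{C_{\bG}^{\circ}(s)}_{C_{\bM}^{\circ}(s)}(\pi_u^{C_{\bM}^{\circ}(s)^F})$ at the unipotent part $w$, which by the same centraliser reasoning equals $\pi_u^{C_{\bG}^{\circ}(s)^F}(w)$. Combined with the normalisation bookkeeping from the coset count $|\{h \in \bG^F : h^{-1}sh \in \bM\}|$ and the ratio $|C_{\bG^F}(g)|/|C_{\bM^F}(g)|$, this shows $R_{\bM}^{\bG}(\pi_g^{\bM^F})$ is supported on $g^{\bG^F}$ and takes value $|C_{\bG^F}(g)|$ there, giving $R_{\bM}^{\bG}(\pi_g^{\bM^F}) = \pi_g^{\bG^F}$ as required.

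The main obstacle is the combinatorial reduction of the character formula under the centraliser hypothesis: one must verify that every fibre in the defining double sum collapses to a single term and that the Green-function factors assemble into the correct normalisation. This is purely group-theoretic and needs no assumption on $p$ or on $Z(\bG)$, in keeping with the stated generality of the lemma.
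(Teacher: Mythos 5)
Your reformulation via Frobenius reciprocity is sound (modulo a harmless conjugation bookkeeping issue in the first display, since $\langle\pi_g^H,f\rangle_H = \overline{f(g)}$ rather than $f(g)$ with the paper's convention): the lemma is indeed equivalent to $R_{\bM}^{\bG}(\pi_g^{\bM^F}) = \pi_g^{\bG^F}$, and the character formula is the right tool. However, there is a genuine gap in how you make the formula collapse. The crucial group-theoretic input is that $C_{\bG}^{\circ}(g)\leqslant\bM$ implies $C_{\bG}^{\circ}(s)\leqslant\bM$, i.e.\ $C_{\bG}^{\circ}(s) = C_{\bM}^{\circ}(s)$; this is Bonnaf\'e's lemma \cite[1.2, 1.3]{bonnafe:2004:actions-of-rel-Weyl-grps-I} (see \cref{centralisers}) and is not automatic from the hypothesis. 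With it, the two-variable Green function $Q^{C_{\bG}^{\circ}(s)}_{C_{\bM}^{\circ}(s)}$ degenerates to the trivial case $C_{\bM}^{\circ}(s)=C_{\bG}^{\circ}(s)$ and the sum collapses. You instead derive only the weaker statement $C_{\bG}^{\circ}(g)=C^{\circ}_{C_{\bG}^{\circ}(s)}(u)\leqslant C_{\bM}^{\circ}(s)$, which by itself does not trivialise the Green functions, and then assert without proof that the $C_{\bG}^{\circ}(s)^F$-class of $u$ meets $C_{\bM}^{\circ}(s)^F$ in a single class and that ``the same centraliser reasoning'' gives $R^{C_{\bG}^{\circ}(s)}_{C_{\bM}^{\circ}(s)}(\pi_u^{C_{\bM}^{\circ}(s)^F})=\pi_u^{C_{\bG}^{\circ}(s)^F}$. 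That last step is exactly the original lemma again, now for a unipotent element; it is only easy because Bonnaf\'e's lemma applied in $C_{\bG}^{\circ}(s)$ forces $C_{\bM}^{\circ}(s)=C_{\bG}^{\circ}(s)$ when $u$ is unipotent, so the reasoning is circular as written.

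Once the equality $C_{\bG}^{\circ}(s)=C_{\bM}^{\circ}(s)$ is in hand, the paper takes a cleaner route than unwinding the character formula by hand: it invokes the functorial identity $\Res_{C_{\bM}^{\circ}(s)^F}^{\bM^F}\circ{}^*R_{\bM}^{\bG} = {}^*R_{C_{\bM}^{\circ}(s)}^{C_{\bG}^{\circ}(s)}\circ\Res_{C_{\bG}^{\circ}(s)^F}^{\bG^F}$ from \cite[12.5]{digne-michel:1991:representations-of-finite-groups-of-lie-type}, observes that the right-hand Lusztig restriction is then the identity, and evaluates at $g\in C_{\bG}^{\circ}(s)^F$. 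Your plan is of the same essential content (the DM formula packages the Green-function computation), but you should explicitly reduce to $C_{\bG}^{\circ}(s)=C_{\bM}^{\circ}(s)$ at the outset; without that reduction the collapse of the double sum cannot be justified.
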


\begin{proof}
Let $g = su = us$ be the Jordan decomposition of the element. As $C_{\bG}^{\circ}(g) \leqslant \bM$ we have by \cite[1.3]{bonnafe:2004:actions-of-rel-Weyl-grps-I} that $C_{\bG}^{\circ}(s) \leqslant \bM$ and $C_{\bG}^{\circ}(s) = C_{\bM}^{\circ}(s)$. Thus, by the formula in \cite[12.5]{digne-michel:1991:representations-of-finite-groups-of-lie-type}
\begin{equation*}
\Res_{C_{\bM}^{\circ}(s)^F}^{\bM^F}\circ {}^*R_{\bM}^{\bG}
= {}^*R_{C_\bM^\circ(s)}^{C_\bG^\circ(s)} \circ \Res_{C_{\bG}^{\circ}(s)^F}^{\bG^F} = \Res_{C_{\bG}^{\circ}(s)^F}^{\bG^F}.
\end{equation*}
Note that $u \in C_{\bG}^{\circ}(s)$, as $C_{\bG}(s)/C_{\bG}^{\circ}(s)$ is a $p'$-group, so $g \in C_{\bG}^{\circ}(s)$. Hence, the statement follows by evaluating this formula at $\chi$ and then further at $g$.
\end{proof}

\begin{proof}[of \cref{cor:bound}]
We assume $\iota : \bG \to \widetilde{\bG}$ is a regular embedding and $\widetilde{\bM} \leqslant \widetilde{\bG}$ is the $F$-stable Levi subgroup corresponding to $\bM \leqslant \bG$, c.f., \cref{pa:isotypic-morphisms}. Consider an irreducible character $\widetilde{\chi} \in \Irr(\widetilde{\bG}^F)$ such that $\chi$ is a constituent of the restriction $\Res_{\bG^F}^{\widetilde{\bG}^F}(\widetilde{\chi})$. By \cref{thm:character-bound} we have
\begin{equation*}
|\widetilde{\chi}(g)| \leqslant f(r)\cdot \widetilde{\chi}(1)^{\alpha_{\widetilde{\bG}}(\widetilde{\bM},F)}.
\end{equation*}

As $\iota$ defines a bijection between the unipotent classes of $\bG$ and $\widetilde{\bG}$ which preserves the dimension of each class, and similarly for $\bM$ and $\widetilde{\bM}$, we have $\alpha_{\widetilde{\bG}}(\widetilde{\bM},F) = \alpha_{\bG}(\bM,F)$. By a result of Lusztig \cite{lusztig:1988:reductive-groups-with-a-disconnected-centre} the restriction
\begin{equation*}
\Res_{\bG^F}^{\widetilde{\bG}^F}(\widetilde{\chi}) = \chi_1 + \ldots + \chi_m,
\end{equation*}
with each $\chi_i \in \Irr(\bG^F)$, is multiplicity free. Hence, if $\chi$ is $\widetilde{\bG}^F$-invariant, then $\chi = \Res_{\bG^F}^{\widetilde{\bG}^F}(\widetilde{\chi})$ and we are done in this case.
 
Next assume that $g^{\widetilde{\bG}^F} = g^{\bG^F}$. This implies that $\widetilde{\bG}^F = C_{\widetilde{\bG}^F}(g)\bG^F$, and so
we can find $x_i \in C_{\widetilde{\bG}^F}(g)$ such that $\chi_i = \chi^{x_i}$. It follows that $\chi_i(g) = \chi(g)$ and
$\widetilde{\chi}(g) = m\chi(g)$. Since $\widetilde{\chi}(1) = m\chi(1)$, we now have
\begin{equation*}
|\chi(g)| = |\widetilde{\chi}(g)|/m \leq f(r)\widetilde{\chi}(1)^{\alpha_{\bG}(\bM,F)}/m \leq f(r)\chi(1)^{\alpha_\bG(\bM,F)},
\end{equation*}
as claimed.

Suppose now that $C_\bG(g)$ is connected, and consider any $h \in g^{\widetilde{\bG}^F} \subseteq \bG^F$. 
As $\widetilde{\bG} = Z(\widetilde{\bG})\bG$, we have that 
$h \in \bG^F$ is $\bG$-conjugate to $g$. By the Lang-Steinberg theorem, $h$ is $\bG^F$-conjugate to $g$. Thus 
$g^{\widetilde{\bG}^F} = g^{\bG^F}$, and we are done by the previous paragraph.

Finally, if $[\bG,\bG]$ is simply connected and $g$ is semisimple, then $C_\bG(g)$ is connected by Steinberg's theorem, and we 
can apply the previous result.
\end{proof}

We end this section by recording the following observation.

\begin{lem}\label{centralisers}
For an $F$-stable Levi subgroup $\bM \leqslant \bG$ and an element $g \in \bM^F$ with semisimple part $s$, consider the following four conditions:
\begin{enumerate}[label=\textnormal{(\roman*)}]
	\item $C_{\bG}^{\circ}(s) \leqslant \bM$,
	\item $C_{\bG}^\circ(g) \leqslant \bM$,
	\item $C_{\bG}^{\circ}(s)^F \leqslant \bM^F$,
	\item $C_{\bG}^{\circ}(g)^F \leqslant \bM^F$.
\end{enumerate}
Then \textnormal{(i)} and \textnormal{(ii)} are equivalent. Moreover, if $\bM$ is $(\bG,F)$-split then all four conditions are equivalent.
\end{lem}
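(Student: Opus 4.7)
The plan is to establish the two equivalences (i)$\Leftrightarrow$(ii) in full generality, note that (i)$\Rightarrow$(iii) and (ii)$\Rightarrow$(iv) are immediate, and then use the splitness hypothesis for the more substantial implications (iii)$\Rightarrow$(i) and (iv)$\Rightarrow$(ii). Throughout, write $g = su$ for the Jordan decomposition inside $\bM^F$.

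For (i)$\Leftrightarrow$(ii), the forward direction is immediate from the identity $C_{\bG}^{\circ}(g) = C_{C_{\bG}^{\circ}(s)}^{\circ}(u)$, since this exhibits $C_{\bG}^{\circ}(g)$ as a subgroup of $C_{\bG}^{\circ}(s)$. The reverse implication (ii)$\Rightarrow$(i) is exactly \cite[1.3]{bonnafe:2004:actions-of-rel-Weyl-grps-I}, already invoked in the proof of \cref{lem:restrict-equal-val}. The implications (i)$\Rightarrow$(iii) and (ii)$\Rightarrow$(iv) are both trivial, since if $C \leqslant \bM$ then $C^F = C \cap \bG^F \leqslant \bM \cap \bG^F = \bM^F$ because $\bM$ is $F$-stable.

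The main content is the converse direction in the split case. Assume $\bM$ is the Levi complement of an $F$-stable parabolic $\bP \leqslant \bG$ with unipotent radical $\bU$, and choose an $F$-stable maximal torus $\bT \leqslant \bM$ with $s \in \bT$ (this exists because $s$ is semisimple in $\bM^F$). The key lemma I need is: for any connected $F$-stable reductive subgroup $\bH \leqslant \bG$ containing $\bT$, one has $\bH^F \leqslant \bM^F \Rightarrow \bH \leqslant \bM$. To prove it, decompose $\bH = \langle \bT, \bU_\alpha : \alpha \in \Phi_\bH\rangle$ with respect to $\bT$. If $\bH \not\leqslant \bM$, then $\Phi_\bH \not\subseteq \Phi_\bM$, and using $-\alpha \in \Phi_\bH$ whenever $\alpha \in \Phi_\bH$ one may assume there exists $\alpha \in \Phi_\bH \cap \Phi(\bU)$. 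Since both $\Phi_\bH$ and $\Phi(\bU)$ are $F$-stable, the entire $F$-orbit of $\alpha$ lies in $\Phi_\bH \cap \Phi(\bU)$. Hence $V := \prod_{i} \bU_{F^i(\alpha)}$ is a non-trivial connected $F$-stable unipotent subgroup of $\bH \cap \bU$. By Lang--Steinberg $V^F \neq \{1\}$, but $V^F \leqslant \bU^F \cap \bM = \{1\}$ by the Levi decomposition, contradicting $\bH^F \leqslant \bM^F$. Applying this to $\bH = C_{\bG}^{\circ}(s)$ (which is connected reductive, $F$-stable, and contains $\bT$) yields (iii)$\Rightarrow$(i).

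For (iv)$\Rightarrow$(ii) the argument is analogous, but $\bH := C_{\bG}^{\circ}(g)$ need not be reductive, which is the main technical obstacle. The preliminary observation is that $u \in C_{\bG}^{\circ}(g)$: indeed, inside the reductive group $C_{\bG}^{\circ}(s)$ the component group $A_{C_{\bG}^{\circ}(s)}(u)$ has order coprime to $p$ while $u$ has $p$-power order, forcing $u \in C_{C_{\bG}^{\circ}(s)}^{\circ}(u) = C_{\bG}^{\circ}(g)$. Hence from (iv) we get $u \in \bM^F$ and $s = gu^{-1} \in \bM^F$. To conclude $C_{\bG}^{\circ}(g) \leqslant \bM$, take a Levi decomposition $C_{\bG}^{\circ}(g) = \bL \ltimes \bV$ with $\bL$ reductive and $\bV$ its unipotent radical, both $F$-stable (available in good characteristic). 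The reductive factor $\bL$ contains $Z^{\circ}(C_{\bG}^{\circ}(s))$; enlarging if necessary and choosing an $F$-stable maximal torus of $\bL$ inside $\bM$, the key lemma above gives $\bL \leqslant \bM$. For $\bV$, one runs the same Lang--Steinberg argument: if $\bV \not\leqslant \bM$, then $\bV \cap \bU$ contains a non-trivial $F$-stable connected subgroup whose $F$-fixed points would lie in $\bH^F \cap \bU \setminus \{1\}$, contradicting $\bH^F \leqslant \bM^F$ and $\bU \cap \bM = \{1\}$.
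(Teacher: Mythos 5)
Your reduction of the problem is correct in outline: establish (i)$\Leftrightarrow$(ii) in general, note the trivial implications to the $F$-fixed-point statements, and then handle the converse in the split case. However, your proof of the decisive step diverges substantially from the paper's, and the divergence introduces a genuine gap.

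The paper closes the cycle by proving (iv)$\Rightarrow$(i) directly, and does so very economically: with $\bQ = \bM\ltimes\bV$ the $F$-stable parabolic, it invokes Spaltenstein's result that $C_{\bV}(g)$ is \emph{connected} (so $C_{\bV}(g) \leqslant C_{\bG}^{\circ}(g)$), concludes from (iv) that $C_{\bV}(g)^F \leqslant \bM\cap\bV = \{1\}$, and then uses the Rosenlicht count $|C_{\bV}(g)^F| = q^{\dim C_{\bV}(g)}$ to force $C_{\bV}(g) = \{1\}$; Bonnaf\'e \cite[1.3]{bonnafe:2004:actions-of-rel-Weyl-grps-I} then gives $C_{\bG}^{\circ}(s) \leqslant \bM$. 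Your argument for (iv)$\Rightarrow$(ii) instead tries to decompose $C_{\bG}^{\circ}(g)$ into a Levi factor and unipotent radical, and you explicitly remark that this decomposition is ``available in good characteristic.'' But the lemma carries \emph{no} hypothesis on $p$, so you are proving a weaker statement than the one asserted. Relatedly, your preliminary claim that $u \in C_{\bG}^{\circ}(g)$ is justified by asserting that $A_{C_{\bG}^{\circ}(s)}(u)$ has order coprime to $p$; this component group can in general have $p$-torsion, so this step also silently imports a characteristic restriction. (Notice that the paper never needs $u \in C_{\bG}^{\circ}(g)$: it works with $C_{\bV}(g)$ and lets Spaltenstein's connectedness result do the heavy lifting.)

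Two further, smaller points. First, since (iii)$\Rightarrow$(iv) is immediate ($C_{\bG}^{\circ}(g)\leqslant C_{\bG}^{\circ}(s)$), a single implication (iv)$\Rightarrow$(i) suffices to close the cycle; your separate root-group argument for (iii)$\Rightarrow$(i) is valid but logically redundant, and the root-group combinatorics it uses (taking the $F$-orbit of a root in $\Phi_{\bH}\cap\Phi(\bU)$ and applying Lang--Steinberg) is in any case morally the same Rosenlicht-type count that the paper applies once to $C_{\bV}(g)$. Second, the paper attributes the equivalence (i)$\Leftrightarrow$(ii) to \cite[Lemma 1.2]{bonnafe:2004:actions-of-rel-Weyl-grps-I} as a single citation; your split into an ``easy'' inclusion plus Bonnaf\'e 1.3 is fine but slightly less clean. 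The essential lesson is that the paper's choice to analyse $C_{\bV}(g)$ rather than a Levi decomposition of $C_{\bG}^{\circ}(g)$ sidesteps every characteristic issue and makes the argument both shorter and unconditional; your proposal, as written, does not establish the lemma in bad characteristic.
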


\begin{proof}
The equivalence of (i) and (ii) is part of \cite[Lem.~1.2]{bonnafe:2004:actions-of-rel-Weyl-grps-I}.
Assume now that $\bM$ is $(\bG,F)$-split. Certainly, (i) $\Rightarrow$ (iii) $\Rightarrow$ (iv). We now prove that (iv) $\Rightarrow$ (i). Let $\bQ \leqslant \bG$ be an $F$-stable parabolic subgroup with Levi complement $\bM$. We denote by $\bV$ the unipotent radical of $\bQ$ so that $\bQ = \bV \rtimes \bM$; note that $\bV$ is $F$-stable because $\bQ$ is. By a result of Spaltenstein $C_{\bV}(g)$ is connected, see \cite[1.2]{bonnafe:2004:actions-of-rel-Weyl-grps-I}, so $C_{\bV}(g) \leqslant C_{\bG}^{\circ}(g)$. Hence, 
(iv) implies that $C_{\bV}(g)^F \leqslant \bM \cap \bV = \{1\}$. However, $C_{\bV}(g)$ is a connected group all of whose elements are unipotent, so by Rosenlicht's Theorem we have that
\begin{equation*}
|C_{\bV}(g)^F| = q^{\dim C_{\bV}(g)}.
\end{equation*}
This implies that $\dim C_{\bV}(g) = 0$, so by \cite[1.3]{bonnafe:2004:actions-of-rel-Weyl-grps-I} we have that 
$C_{\bG}^{\circ}(s) \leqslant \bM$.
\end{proof}

\section{\texorpdfstring{Split groups of type $A$}{Split groups of type A}}\label{sec:type-A}

In this section, we prove our main results on finite general and special linear groups. First we prove the following result, which improves 
\cite[Thm.~3.3]{bezrukavnikov-liebeck-shalev-tiep:2017:character-bounds-grps-Lie-type}. We follow the same proof, but make the involved 
bounds explicit:

\begin{thm}\label{gl-uni}
There is a function $g:\N \to \N$ such that the following statement holds. For any $n \geq 2$, any prime
power $q$, $\ell = 0$ or any prime not dividing $q$, any irreducible $\ell$-Brauer character $\varphi$ of
$G:= \GL_n(q)$, and any unipotent element $1 \neq u \in G$,
\begin{equation*}
|\varphi(u)| \leq g(n) \cdot \varphi(1)^{\frac{n-2}{n-1}}.
\end{equation*}
Furthermore, if $q > 3n^2$ then one can take $g(n) = 3(f(n-1)+1)$, where $f$ is the function in 
\cite[Thm.~1.1]{bezrukavnikov-liebeck-shalev-tiep:2017:character-bounds-grps-Lie-type}.
\end{thm}

\begin{proof}
(a) Note that the statement holds when $n=2$ (choosing $g(2) = 1$) as in this case we have $|\varphi(u)| \leq 1$. So in what follows
we may assume $n \geq 3$.

Recall the partial order $\leq$ on the set of unipotent classes of $\GC = \GL_n(\KK)$:
$x^\GC \leq y^\GC$ precisely when $x^\GC \subseteq \overline{y^\GC}$, and we consider
$G = \GC^F$ for a suitable Frobenius endomorphism $F$.
We will prove by induction using
the partial order $\leq$ that, if $u$ is parametrized by a partition $\lam \vdash n$ then
\begin{equation*}
|\varphi(u)| \leq g(n) \cdot \varphi(1)^{\frac{n-2}{n-1}}
\end{equation*}
for some positive constant $g(n)$ depending only on $n$, and moreover one can take
\begin{equation}\label{for-g}
g(n) = 3(f(n-1)+1)
\end{equation}
if $q > 3n^2$ and $f$ is the function in \cite[Thm.~1.1]{bezrukavnikov-liebeck-shalev-tiep:2017:character-bounds-grps-Lie-type} (or the function
$f$ in \cref{thm:character-bound}).

Also, recall that $u$ is a {\it Richardson} unipotent element, that is, we can find an $F$-stable parabolic subgroup
$\PC$ with unipotent radical $\UC$ such that $u^\GC \cap \UC$ is an open dense subset of $\UC$ that forms
a single $\PC$-orbit. As in the proof of \cite[Thm.~3.3]{bezrukavnikov-liebeck-shalev-tiep:2017:character-bounds-grps-Lie-type},
we have $u^\GC \cap U$ is a single $\PC^F$-orbit, where $U :=\UC^F$, and so
\begin{equation*}
|u^\GC \cap U| = [\PC^F:C],
\end{equation*}
where $C := \CB_\PC(u)^F = \CB_\GC(u)^F$. The structure of the connected algebraic group $\CB_\GC(u)$ is given in
\cite[Thm.~3.1]{LS}; in particular, its quotient by the unipotent radical $R_u(\CB_\GC(u))$ is a product of $\GL$-factors. As $\dim\CB_\GC(u) = \dim\PC-\dim\UC$, 
it follows that
\begin{equation*}
|C| \leq q^{\dim \PC-\dim \UC}.
\end{equation*}
On the other hand,
\begin{equation*}
|\PC^F| \geq (q-1)^{\dim \PC} = q^{\dim \PC}\left(1-\frac{1}{q}\right)^{\dim \PC}.
\end{equation*}
Note that when $q > 3n^2$, we have that
\begin{equation*}
\left(1-\frac{1}{q}\right)^{\dim \PC} > \left(1-\frac{1}{q}\right)^{n^2} > 1- \frac{n^2}{q} > \frac{2}{3}.
\end{equation*}
As $|U| = q^{\dim \UC}$, it follows that
\begin{equation}\label{for-u}
  |u^\GC \cap U| \geq \frac{2}{3}|U|, \qquad |U \smallsetminus u^\GC| \leq \frac{1}{3}|U|
\end{equation}
for all $q > 3n^2$ and all $\lam \vdash n$. By taking $g(n)$ large enough, say
\begin{equation}\label{for-g2}
  g(n) \geq \max_{q'= p^r < 3n^2} \left\{ \frac{|\psi(w)|}{\psi(1)^{\frac{n-2}{n-1}}} \mid 1 \neq w \in \GL_n(q'),~w \mbox{ unipotent},~\psi \in \IBR_\ell(\GL_n(q'))
    \right\},
\end{equation}
we may assume that the condition $q > 3n^2$ is indeed satisfied.

(b) Now we will assume that $q > 3n^2$ and choose $g(n) = 3(f(n-1)+1)$ as in \eqref{for-g}. 
Let $1 \neq w \in U \smallsetminus u^\GC$ be labeled by $\nu \vdash n$. Then
\begin{equation*}
w \in \UC = \overline{u^\GC \cap \UC},
\end{equation*}
and so $w^\GC \leq u^\GC$. In particular, if $u^\GC$ is minimal with respect to $\leq$, then
no such $w$ exists. If $u^\GC$ is not minimal, then by the induction hypothesis applied to $w^\GC$ we have
\begin{equation}\label{for-w}
  |\varphi(w)| \leq g(n) \cdot \varphi(1)^{\frac{n-2}{n-1}}.
\end{equation}
Let $\rho := {}^*R^\GC_\LC(\varphi)$, where $\LC$ is an $F$-stable Levi subgroup of $\PC$. Then
\begin{equation*}
\rho(1) = [\varphi|_U,1_U]_U = \frac{1}{|U|}\left( \varphi(1) + \sum_{1 \neq w \in U \smallsetminus u^\GC}\varphi(w) + \sum_{u' \in u^\GC \cap U}\varphi(u')\right).
\end{equation*}
As $u^\GC \cap U = u^G$, we then get
\begin{equation*}
|u^\GC \cap U|\cdot|\varphi(u)|  \leq |U|\rho(1) + \sum_{1 \neq w \in U \smallsetminus u^\GC}|\varphi(w)| + \varphi(1).
\end{equation*}
It now follows from \eqref{for-u} and \eqref{for-w} that
\begin{equation*}
|\varphi(u)| \leq \frac{3}{2}\rho(1) + \frac{1}{2}g(n)\varphi(1)^{\frac{n-2}{n-1}} + \frac{3}{2|U|}\varphi(1).
\end{equation*}
Next, \cite[Thm.~1.1]{bezrukavnikov-liebeck-shalev-tiep:2017:character-bounds-grps-Lie-type} and the bound $\al \le \frac{n-2}{n-1}$ in 
\cite[Prop.~4.5]{bezrukavnikov-liebeck-shalev-tiep:2017:character-bounds-grps-Lie-type} imply that
\begin{equation*}
\rho(1) \leq f(n-1)\varphi(1)^{\frac{n-2}{n-1}}.
\end{equation*}
On the other hand,
$|U| \geq q^{n-1}$ and $\varphi(1) < q^{n^2/2}$, whence for $n \geq 4$ we have
\begin{equation*}
\frac{\varphi(1)}{|U|} < \varphi(1)^{\frac{n-2}{n-1}}.
\end{equation*}
The same conclusion holds for $n = 3$ since $\varphi(1) < q^4$ in this case. Consequently,
\begin{equation*}
|\varphi(u)| \leq \left(\frac{3}{2}(f(n-1)+1)+\frac{1}{2}g(n)\right)\varphi(1)^{\frac{n-2}{n-1}} = g(n)\varphi(1)^{\frac{n-2}{n-1}},
\end{equation*}
and the induction step is completed.
\end{proof}

\begin{proof}[of \cref{thm:glsl}]
Let $g=su=us$ with $s$ semisimple and $u$ unipotent. Also view $G = \bG^F$ with $\bG = \GL_n(\mathbb{F})$ or $\SL_n(\mathbb{F})$ and $F$ a suitable Frobenius 
endomorphism.

(a) First we consider the case $G = \GL_n(q)$. If $s \in Z(G)$ then the statement follows from \cref{gl-uni}. If $s \notin Z(G)$, then $C_{\bG}(g)$ is contained in an $F$-stable proper Levi subgroup, and
so we are done by  \cref{thm:character-bound} and \cite[Prop.~4.3]{bezrukavnikov-liebeck-shalev-tiep:2017:character-bounds-grps-Lie-type}.

(b) Now consider the case $G = \SL_n(q)$ and view $G$ as $[\tilde G,\tilde G]$, where $\tilde G \cong \GL_n(q)$. Arguing as in the proof of
\cite[Thm.~1.5]{bezrukavnikov-liebeck-shalev-tiep:2017:character-bounds-grps-Lie-type}, we are done if $\chi$ is $\tilde G$-invariant, or if 
$g^G = g^{\tilde G}$. 

From now on we may assume that $\chi$ is not $\tilde G$-invariant, and that $g^G \neq g^{\tilde G}$.
Suppose in addition that $s \in Z(G)$. Then the proof of \cite[Thm.~1.5]{bezrukavnikov-liebeck-shalev-tiep:2017:character-bounds-grps-Lie-type},
shows that 
\begin{equation*}
|\chi(g)| \leq (f(n-1)+1)\chi(1)^{\frac{n-2}{n-1}},
\end{equation*}
unless possibly $n=6$, $|\chi(g)| \leq q^9$, and $\chi(1) \geq q^{21/2}(q-1)/6$. Since $g(6) = 3(h(5)+1) > 7$, one can check in the exceptional case
that
\begin{equation*} 
|\chi(g)| \leq 3(f(n-1)+1)\chi(1)^{\frac{n-2}{n-1}}
\end{equation*}
as well.

Thus we may now assume in addition that 
$s \notin Z(G)$. As in the proof of \cref{cor:bound}, we have that $u \neq 1$. Furthermore, as in the proof of 
\cite[Thm.~1.5]{bezrukavnikov-liebeck-shalev-tiep:2017:character-bounds-grps-Lie-type}, we also have that either $\chi(1) \geq q^{(n^2+n)/4}$,
or $2 \mid n$ and
\begin{equation*}
\chi(1) \geq \frac{1}{2}\prod^{n/2}_{j=1}(q^{2j}-1) > \frac{q^{n^2/4}}{2}\left(1 - \sum^{n/2}_{j=1}q^{1-2j} \right) > q^{n^2/4-2.6}.
\end{equation*}
Thus in either case we have that
\begin{equation}\label{eq:deg-gl}
  \chi(1) > q^{n^2/4-2.6}.
\end{equation}  
Now, if $C_{\tilde G}(s) \not\cong \GL_{n/k}(q^k)$ for some $k > 1$, then $C_\bG(g) \leq C_\bG(s)$ is contained in a proper split Levi subgroup of $\bG$,
and we are done again by applying Theorem 1.1 and Proposition 4.3 of \cite{bezrukavnikov-liebeck-shalev-tiep:2017:character-bounds-grps-Lie-type}.
In the remaining case $C_{\tilde G}(s) \cong \GL_{n/k}(q^k)$ for some $k > 1$,  the assumption $u \neq 1$ implies that
$|C_G(g)| \leq q^{n^2/2-2n+4}$ (see part (ii) of the proof of \cite[Thm.~1.5]{bezrukavnikov-liebeck-shalev-tiep:2017:character-bounds-grps-Lie-type}),
and so $|\chi(g)| \leq q^{n^2/4-n+2}$, whence $|\chi(g)| < \chi(1)^{(n-2)/(n-1)}$ by \eqref{eq:deg-gl}.
\end{proof}

\section{\texorpdfstring{Twisted Groups of Type $\A$}{Twisted Groups of Type A}}\label{sec:twisted-type-A}
\begin{pa}
In this section we assume that $\bG = \GL_n(\mathbb{F})$ and $\bT_0 \leqslant \bB_0$ are the maximal torus and Borel subgroup of diagonal and upper triangular matrices respectively. Let $W = W_{\bG}(\bT_0)$ and $\mathcal{S}_{\bG}(\bT_0,F) \subseteq \bT_0\times W$ be the set of pairs $(s,w)$ satisfying ${}^wF(s) = s$. Recall that, as $\bG = \GL_n(\mathbb{F})$, we have $\bG = \bG^{\star}$ is self-dual. Now, to each pair $(s,w) \in \mathcal{S}_{\bG}(\bT_0,F)$ we have a corresponding Deligne--Lusztig character $R_w^{\bG,F}(s) \in \Class(\bG^F)$. Moreover, we have a Green function
\begin{equation*}
Q_w^{\bG,F} = R_w^{\bG,F}(s)|_{\mathcal{U}(\bG)^F} : \mathcal{U}(\bG)^F \to \Ql
\end{equation*}
which is well known to be independent of $s$.
\end{pa}

\begin{pa}
If $s \in \bT_0$ then $W(s) := W_{\bG}(s)$ is a product of symmetric groups so the $2$-sided cells of $W_{\bG}(s)$ are in bijection with $\Irr(W_{\bG}(s))$. Hence we may, and will, identify the families $\Fam(\bG^{\star},\bT_0^{\star}) = \Fam(\bG,\bT_0)$ with pairs $(s,\lambda)$ such that $s \in \bT_0$ and $\lambda \in \Irr(W_{\bG}(s))$. Now in our setting with $\bG = \GL_n(\mathbb{F})$ we have a bijection $[\Fam(\bG,\bT_0)]^F \to \Irr(\bG^F)$, which we denote by $[s,\lambda] \mapsto \chi_{[s,\lambda]}$.
\end{pa}

\begin{pa}
We can construct $\chi_{[s,\lambda]}$ as follows. As $[s,\lambda]$ is $F$-stable there exists a $w \in W$ such that $(s,w) \in \mathcal{S}_{\bG}(\bT_0,F)$. Clearly $\boldsymbol{\iota}_wF(W(s)) = W(s)$ so we can define a map $\mathcal{R}_{s,w}^{\bG,F} : \Class(W(s).wF) \to \Class(\bG^F, s)$ by setting
\begin{equation*}
\mathcal{R}_{s,w}^{\bG,F}(f) = \frac{1}{|W(s)|}\sum_{x \in W(s)} f(xwF)R_{xw}^{\bG,F}(s).
\end{equation*}
It is well known that, as $\bG = \GL_n(\mathbb{F})$, if $\widetilde{\varphi} \in \Irr(W(s).wF)$ then there exists a root of unity $\varepsilon_{s,\widetilde{\varphi}} \in \Ql^{\times}$ such that $\chi_{[s,\varphi]} = \varepsilon_{s,\widetilde{\varphi}}\mathcal{R}_{s,w}^{\bG,F}(\widetilde{\varphi})$.
\end{pa}

\begin{pa}
For the rest of this section $F : \bG \to \bG$ denotes the Frobenius endomorphism defined by $F(a_{ij}) = (a_{ij}^q)$ and $F' : \bG \to \bG$ denotes the Frobenius endomorphism defined by $F'(A) = {}^{n_0}F(A^{-T})$. Here $n_0 \in N_{\bG}(\bT_0)$ represents the longest element $w_0 \in W = W_{\bG}(\bT_0)$. In particular, we have $\bG^F = \GL_n(q)$ and $\bG^{F'} = \GU_n(q)$.
\end{pa}

\begin{lem}\label{lem:ennola-duality-ss-classes}
Assume $(s,w) \in \mathcal{S}_{\bG}(\bT_0,F')$. If $q > n$ then there exists an element $t \in \bT_0$ such that $(t,ww_0) \in \mathcal{S}_{\bG}(\bT_0,F)$ and $W(s) = W(t)$.
\end{lem}

\begin{proof}
If $n=1$ the statement is trivial so we assume that $n \geqslant 2$. For any integers $r,a >0$ and $\zeta \in \mathbb{F}^{\times}$ we set $B_r^a(\zeta) = (\zeta,\zeta^r,\dots,\zeta^{r^{a-1}}) \in (\mathbb{F}^{\times})^a$. Moreover, for any integer $m > 0$ we denote by $\mu_m \leqslant \mathbb{F}^{\times}$ the subgroup of elements $\zeta \in \mathbb{F}^{\times}$ such that $\zeta^m = 1$. We assume $\mu_0$ is the trivial subgroup. It suffices to prove the statement for any element in the same $W$-orbit as $s$. Hence, we can assume that $s = \diag(B_1^{m_1}(\zeta_1),\dots,B_1^{m_k}(\zeta_k))$ and $(\zeta_1,\dots,\zeta_k) = (B_{-q}^{a_1}(\eta_1),\dots,B_{-q}^{a_{\ell}}(\eta_{\ell}))$ where: $m_i,a_i > 0$ are integers, $\eta_i \in \mu_{(-q)^{a_i}-1}$, and the $\zeta_i \in \mathbb{F}^{\times}$ are pairwise distinct. Thus $W(s) \cong \mathfrak{S}_{m_1}\times\cdots\times \mathfrak{S}_{m_k}$.

For each $1 \leqslant i \leqslant \ell$ choose $\nu_i \in \mu_{q^{a_i}-1}$ and set $(\xi_1,\dots,\xi_k) = (B_q^{a_1}(\nu_1),\dots,B_q^{a_1}(\nu_1))$. The element $t = \diag(B_1^{m_1}(\xi_1),\dots,B_k^{m_k}(\xi_k))$ certainly satisfies ${}^{ww_0}F(t)=t$. We claim our assumption implies that we may choose the $\nu_i$ so that the $\xi_i \in \mathbb{F}^{\times}$ are pairwise distinct. We will then have $W(s) = W(t)$ as desired.

For this, assume $\nu_i^{q^a} = \nu_j^{q^b}$. Clearly $\nu_i^{q^a(q^{a_j}-1)} = 1$ and as $\gcd(q^a,q^{a_i}-1) = 1$ we must have the order of $\nu_i$ divides $q^{a_j}-1$ so $\nu_i \in \mu_{q^{a_j}-1}$. Applying the same argument to $\nu_j$ gives $\nu_i,\nu_j \in \mu_{q^{\gcd(a_i,a_j)}-1}$. For any integer $m > 0$ let $\bar{\mu}_{q^m-1}$ be the set difference $\mu_{q^m-1} \setminus \mu_{q^{m-1}-1}$ then note that for any $\nu \in \bar{\mu}_{q^m-1}$ we have $\nu^{q^a} \in \bar{\mu}_{q^m-1}$ for any integer $a \geqslant 0$. Moreover, $|\bar{\mu}_{q^m-1}| = q^m-q^{m-1} = q^{m-1}(q-1) \geqslant n$ by assumption. We leave it to the reader to conclude that we may choose the $\nu_i \in \bar{\mu}_{q^{a_i}-1}$ such that the resulting $\xi_i$ are pairwise distinct.
\end{proof}

\begin{rem}
If we assume $s = \diag(\zeta_1,\dots,\zeta_n)$ with $\zeta_i \in \mu_{q+1}$ all pairwise distinct, then $q+1\geqslant n$. We thus have ${}^{w_0}F'(s) = s$ and $W(s)$ is trivial. Any element $t$, as in \cref{lem:ennola-duality-ss-classes}, would need to be of the form $\diag(\nu_1,\dots,\nu_n)$ with $\nu_i \in \mu_{q-1}$ pairwise distinct so cannot exist if $q \leqslant n$. Hence, the assumption that $q>n$ is necessary in general.
\end{rem}

We will also need the following easy observation.

\begin{lem}\label{lem:ratio}
Let $f(t) \in \R[t]$ and $g(t) \in \R[t]$ be two polynomials with real coefficients. Suppose that $|f(t_i)| \leq |g(t_i)|$ for an infinite sequence 
$t_1 < t_2 < t_3 < \cdots$ tending to infinity. Then, for any $\epsilon > 0$  there is a constant $C=C(f,g,\epsilon)$ such that $|f(t)| \leq (1+\epsilon)|g(t)|$ whenever $|t| \geq C$.
\end{lem}

\begin{proof}
Without any loss we may assume that the leading coefficient $a$ of $f$ and the leading coefficient $b$ of $g$ are both positive, and write
\begin{equation*}
f(t) = at^m + \sum^{m-1}_{i=0}a_it^i, \quad g(t) = bt^n + \sum^{n-1}_{i=0}b_it^i.
\end{equation*}
The hypothesis now implies that either $m < n$, or $m=n$ and $a \leq b$. Now let
\begin{equation*}
A = \max_{0 \leq i \leq m-1}|a_i|, \quad B = \max_{0 \leq i \leq n-1}|b_i|,
\end{equation*}
and choose
\begin{equation*}
C= 1+\frac{A+(1+\epsilon)B}{(1+\epsilon)b-a}.
\end{equation*}
Then for any $t$ with $|t| \geq C$, we have
\begin{equation*}
|f(t)| \leq |t|^m(a+A/(C-1)), \qquad |g(t)| \geq |t|^n(b-B/(C-1)),
\end{equation*}
whence $|f(t)| \leq (1+\epsilon)|g(t)|$.
\end{proof}

\begin{rem}
The example of $(f(t),g(t),t_i) = (t-1,t,i)$ shows that Lemma \ref{lem:ratio} is false when we set $\epsilon=0$.
\end{rem}

\begin{pa}
For each partition $\lambda \vdash n$ we denote by $u_{\lambda}^+ \in \bG^F$ and $u_{\lambda}^- \in \bG^{F'}$ a unipotent element for which the sizes of the Jordan blocks in the Jordan normal form of the element are given by $\lambda$. Recall that for each $w \in W$ and partition $\lambda \vdash n$ there exist polynomials $\mathcal{Q}_{w,\lambda}^{\pm} \in \mathbb{Z}[t]$ such that $Q_w^{\bG,F}(u_{\lambda}^+) = \mathcal{Q}_{w,\lambda}^+(q)$ and $Q_w^{\bG,F'}(u_{\lambda}^-) = \mathcal{Q}_{w,\lambda}^-(q)$. Now Ennola duality states that
\begin{equation*}
\mathcal{Q}_{w,\lambda}^- = \mathcal{Q}_{ww_0,\lambda}^+(-t).
\end{equation*}
With this we may prove the following.
\end{pa}

\begin{prop}\label{gu-uni}
For any $n \geq 2$, there is a constant $C'=C'(n)$ such that the following statement holds.
Assume $\chi \in \Irr(\GU_n(q))$, with $q \geq C'$. Then for any unipotent element $u \in \GU_n(q)$ we have that
\begin{equation*}
|\chi(u)| \leqslant (g(n)+1) \cdot \chi(1)^{\frac{n-2}{n-1}}
\end{equation*}
where $g$ is the function defined in \cref{gl-uni}.
\end{prop}

\begin{proof}
Consider the character $\chi_{[s,\varphi]} = \varepsilon_{s,\widetilde{\varphi}}\mathcal{R}_{s,w}^{\bG,F'}(\widetilde{\varphi}) \in \Irr(\bG^{F'})$ for some family $[s,\varphi] \in \Fam(\bG,\bT_0)^{F'}$, pair $(s,w) \in \mathcal{S}_{\bG}(\bT_0,F')$, and extension $\widetilde{\varphi} \in \Irr(H\sd wF')$ of $\varphi$ where $H = W(s)$. We will assume that $\widetilde{\varphi}$ is one of the two extensions defined over $\mathbb{Q}$. Assuming $q > n$ there exists an element $t \in \bT_0$ such that $(t,ww_0) \in \mathcal{S}_{\bG}(\bT_0,F)$ and $H = W(t)$, see \cref{lem:ennola-duality-ss-classes}. As $F' = \boldsymbol{\iota}_{w_0}$ and $F$ is the identity, we have $\boldsymbol{\iota}_wF' = \boldsymbol{\iota}_{ww_0} = \boldsymbol{\iota}_{ww_0}F \in \Aut(H)$. In particular, $H \sd wF' = H \sd ww_0 = H \sd ww_0F$; and we have a family $[t,\varphi] \in \Fam(\bG,\bT_0)^F$. Hence we have a corresponding character $\chi_{[t,\varphi]} = \varepsilon_{t,\widetilde{\varphi}}\mathcal{R}_{t,ww_0}^{\bG,F}(\widetilde{\varphi}) \in \Irr(\bG^F)$.

Now if $\lambda\vdash n$ is a partition we have
\begin{align*}
\mathcal{R}_{s,w}^{\bG,F'}(\widetilde{\varphi})|_{u_{\lambda}^-} &= \frac{1}{|H|}\sum_{x \in H}\widetilde{\varphi}(xww_0)\mathcal{Q}_{xw,\lambda}^-(q),\\
\mathcal{R}_{t,ww_0}^{\bG,F}(\widetilde{\varphi})|_{u_{\lambda}^+} &= \frac{1}{|H|}\sum_{x \in H}\widetilde{\varphi}(xww_0)\mathcal{Q}_{xww_0,\lambda}^+(q).
%
\end{align*}
By Ennola duality, and our choice of extension $\widetilde{\varphi}$, it follows that there exists a polynomial $\mathcal{P}_{w,\widetilde{\varphi},\lambda}^H \in \mathbb{Q}[t]$ such that $\mathcal{R}_{s,w}^{\bG,F'}(\widetilde{\varphi})|_{u_{\lambda}^-} = \mathcal{P}_{w,\widetilde{\varphi},\lambda}^H(-q)$ and $\mathcal{R}_{t,ww_0}^{\bG,F}(\widetilde{\varphi})|_{u_{\lambda}^+} = \mathcal{P}_{w,\widetilde{\varphi},\lambda}^H(q)$. After \cref{gl-uni} we have for any prime power $q$ that
\begin{equation*}
|\chi_{[t,\varphi]}(u_{\lambda}^+)| = |\mathcal{P}_{w,\widetilde{\varphi},\lambda}^H(q)| \leqslant g(n)\cdot |\mathcal{P}_{w,\widetilde{\varphi},(1^n)}^H(q)|^{\frac{n-2}{n-1}} = g(n) \cdot \chi_{[t,\varphi]}(1)^{\frac{n-2}{n-1}}
\end{equation*}
Thus, for the pair
\begin{equation*}
f_{w,\widetilde{\varphi},\lambda}^H(t) = \bigl(\mathcal{P}_{w,\widetilde{\varphi},\lambda}^H(t)\bigr)^{n-1}, \quad
    h_{w,\widetilde{\varphi}}^H(t) = g(n)^{n-1}\bigl(\mathcal{P}_{w,\widetilde{\varphi},(1^n)}^H(t)\bigr)^{n-2}
\end{equation*}
of polynomials in $t$ we have that $|f_{w,\widetilde{\varphi},\lambda}^H(q)| \leq |h_{w,\widetilde{\varphi}}^H(q)|$ for all prime powers $q$.

Now we can apply Lemma \ref{lem:ratio} and choose
\begin{equation*}
\epsilon = (1+1/g(n))^{n-1}-1, \qquad C' = \max\left(n+1,\max_{(H,w,\widetilde{\varphi},\lambda)}C(f_{w,\widetilde{\varphi},\lambda}^H,h_{w,\widetilde{\varphi}}^H,\epsilon)\right).
\end{equation*}
Note that the number of possible tuples $(H,w,\widetilde{\varphi},\lambda)$ we need to consider to cover all irreducible characters of $\GU_n(q)$ is bounded in terms of $n$. Indeed, we can take the first term over all standard parabolic subgroups of $W$, the second over $N_W(H)w_0$, the third is identified with a subset of $\Irr(H)^F \subseteq \Irr(H)$, and the last is over all partitions of $n$.

Let $\mathcal{P}_{\lambda} = \mathcal{P}_{w,\widetilde{\varphi},\lambda}^H$, $f_{\lambda} = f_{w,\widetilde{\varphi},\lambda}^H$, and $h = h_{w,\widetilde{\varphi}}^H$. Then for all $q \geq C'$ we have
\begin{equation*}
|\mathcal{P}_{\lambda}(-q)|^{n-1}=|f_{\lambda}(-q)| \leq (1+1/g(n))^{n-1}|h(-q)| = (g(n)+1)^{n-1}|\mathcal{P}_{(1^n)}(-q)|^{n-2},
\end{equation*}
and so
\begin{equation*}
|\chi_{[s,\varphi]}(u_{\lambda}^-)| = |\mathcal{P}_{\lambda}(-q)| \leqslant (g(n)+1)\cdot |\mathcal{P}_{(1^n)}(-q)|^{\frac{n-2}{n-1}} = (g(n)+1) \cdot \chi_{[s,\varphi]}(1)^{\frac{n-2}{n-1}}
\end{equation*}
as desired.
\end{proof}

\begin{proof}[of \cref{thm:gusu}]
Let $x = su = us$ with $s$ semisimple and $u$ unipotent. Also view $G = \bG^F$ with $\bG = \GL_n(\mathbb{F})$ or $\bG = \SL_n(\mathbb{F})$, and $F$ a suitable Frobenius 
endomorphism. 

(a) First we consider the case $G = \GU_n(q)$. If $s \in Z(G)$ then the statement follows from \cref{gu-uni}, by choosing
$C^*(n) \geq C'(n)$ and $h^*(n) \geq g(n)+1$. If $s \notin Z(G)$, then $C_{\bG}(g)$ is contained in an $F$-stable proper Levi subgroup, and
so we are done by  \cref{thm:character-bound} and \cite[Prop.~4.3]{bezrukavnikov-liebeck-shalev-tiep:2017:character-bounds-grps-Lie-type},
by choosing $h^*(n) \geq f(n-1)$.

(b) Now consider the case $G = \SU_n(q)$ and view $G$ as $[\tilde G,\tilde G]$, where $\tilde G \cong \GU_n(q)$. Arguing as in the proof of
\cite[Thm.~1.5]{bezrukavnikov-liebeck-shalev-tiep:2017:character-bounds-grps-Lie-type}, we are done if $\chi$ is $\tilde G$-invariant, or if 
$x^G = x^{\tilde G}$. We will also choose $C^*(n) \geq g_A(n)$, where $g_A(n)$ is the function mentioned in \cref{types A/C}. Hence, if $s \notin Z(G)$, then $C_{\bG}(x)$ is contained in an $F$-stable proper Levi subgroup, and
so we are done by  \cref{thm:character-bound2}
and \cite[Prop.~4.3]{bezrukavnikov-liebeck-shalev-tiep:2017:character-bounds-grps-Lie-type}.

Thus we may now assume that $\chi$ is not $\tilde G$-invariant, $x^G \neq x^{\tilde G}$, and that $s = 1$ so $x = u$. Let $\tilde\chi \in \Irr(\tilde G)$ lying above
$\chi$. Since $\chi$ is not $\tilde G$-invariant, $\tilde\chi$ is reducible over $G$, whence $\tilde\chi(1) > q^{n^2/4-2}$ by \cite[Thm.~3.4]{LBST}, and so
\begin{equation}\label{eq:deg-gu}
  \chi(1) \geq \tilde\chi(1)/(q+1) > q^{n^2/4-3.6}.
\end{equation}  

Let $r_i$ denote the number of Jordan blocks of size $i$ in the Jordan canonical form of $u$ for each $i \geq 1$;
in particular, $\sum_i ir_i = n$. It is easy to see that the condition $u^G \neq u^{\tilde G}$ implies
$\gcd(i \mid r_i \geq 1) > 1$, in particular, $r_1 = 0$. We claim (for $n \geq 5$) that either
\begin{equation}\label{su-4}
  |\CB_G(u)| \leq q^{(n^2-3n+6)/2} \cdot 1.5^{n/2}
\end{equation}
or $u$ has type $J_2^{n/2}$, i.e., $r_2 = n/2$. Indeed, the number $t$ of indices $i$ with $r_i > 0$ is at most $n/2$ since $r_1 = 0$.
Furthermore, $|\GU_{r_i}(q)| \leq (1+1/q)q^{r_i^2} \leq (1.5)q^{r_i^2}$.
It follows from Theorems 3.1 and 7.1 of \cite{LS} that
\begin{equation*}
|\CB_G(u)| < q^N \cdot 1.5^t \leq q^N \cdot 1.5^{n/2},
\end{equation*}
where
\begin{equation*}
N = \dim \CB_{\tilde \bG}(u) = \sum_i ir_i^2 + 2\sum_{i < j}ir_ir_j.
\end{equation*}
As shown in the proof of \cite[Thm.~1.5]{bezrukavnikov-liebeck-shalev-tiep:2017:character-bounds-grps-Lie-type}, $N \leq (n^2-3n+6)/2$
unless $r_2 = n/2$, whence the claim follows.

In the case of \eqref{su-4}, $|\chi(u)| \leq q^{(n^2-3n+6)/4} \cdot 1.5^{n/4}$. Taking $h^*(n) \geq 1.5^{n/4}$ and noting that
\begin{equation*}
(n^2/4 - 3.6) \cdot \frac{n-2}{n-1} > (n^2-3n+6)/4
\end{equation*}
when $n \geq 10$, we are done because of \eqref{eq:deg-gu}.

It remains to consider the case $u = J_2^{n/2}$. Write $n =2m$ and let $W = \mathbb{F}_{q^2}^n = \langle e_1, \ldots ,e_m,f_1, \ldots,f_m \rangle_{\mathbb{F}_{q^2}}$
denote the natural module for $G$, where $(e_1, \ldots,e_m,f_1, \ldots,f_m)$ is a Witt basis for the Hermitian form on $W$. 
We can find nonzero scalars $a_1, \ldots,a_m \in \mathbb{F}_{q^2}$ such that $u$ is represented by the matrix
\begin{equation*}
\diag \left( \begin{pmatrix}1 & a_1\\0 & 1 \end{pmatrix}, \begin{pmatrix}1 & a_2\\0 & 1 \end{pmatrix}, \ldots, \begin{pmatrix}1 & a_m\\0 & 1 \end{pmatrix}\right)
\end{equation*}
in the basis $(e_1,f_1,e_2,f_2, \ldots,e_m,f_m)$ of $W$. As above, we have that $|\CB_G(u)| < (1.5)q^{n^2/2}$, whence
\begin{equation}\label{su-5}
|\chi(u)| < (1.3)q^{n^2/4}.
\end{equation}
Suppose first that
\begin{equation}\label{su-6}
\chi(1) > q^{(n-1)(n-2)/2}.
\end{equation}
As $n \geq 7$, then \eqref{su-5} and \eqref{su-6} immediately imply that $|\chi(u)| < (1.3)\chi(1)^{(n-2)/(n-1)}$.

It remains to consider the case where \eqref{su-6} does not hold.
Let $\chi$ be afforded by a $\Ql G$-module $V$ and let $P := \Stab_G(\langle e_1 \rangle_{\mathbb{F}_{q^2}}) = UL$, with $U$ being the unipotent radical
and $L$ being a Levi subgroup. Note that $u=tv$, where
\begin{equation*}
t = \diag \left( \begin{pmatrix}1 & a_1\\0 & 1 \end{pmatrix}, I_{2m-2}\right) \in \bZ(U)
\end{equation*}
and
\begin{equation*}
v = \diag \left( I_2, \begin{pmatrix}1 & a_2\\0 & 1 \end{pmatrix}, \ldots, \begin{pmatrix}1 & a_m\\0 & 1 \end{pmatrix}\right) \in \SU_{2m-2}(q) = [L,L]
\end{equation*}
in the basis $(e_1,f_1,e_2,f_2, \ldots,e_m,f_m)$.

We decompose the $P$-module $V$ as $\CB_V(U) \oplus V_1 \oplus V_2$, where 
$V_1 := [U,\CB_{\bZ(U)}(V)]$ and $V_2:= [\bZ(U),V]$, and let $\chi_0$, respectively $\chi_1$ and $\chi_2$, denote the $P$-character of
$\CB_V(U)$, respectively of $V_1$ and $V_2$. In particular, $\chi_0 = \,^*R^G_L(\chi)$, and so, arguing as in part (ii) of the proof of
\cite[Thm.~1.4]{bezrukavnikov-liebeck-shalev-tiep:2017:character-bounds-grps-Lie-type}
we get
\begin{equation}\label{su-7}
|\chi_0(u)| \leq \chi_0(1) \leq f(n-1)\chi(1)^{\frac{n-2}{n-1}}.
\end{equation}
Next, we decompose
\begin{equation*}
V_1 = \sum_{1_{U/\bZ(U)} \neq \lambda \in \Irr(U/\bZ(U))}V_\lambda,
\end{equation*}
as a direct sum of $U$-eigenspaces, which are permuted by $L \cong \SU_{n-2}(q) \cdot C_{q^2-1}$.

Note that $u$ has prime order $p \mid q$, and it acts on $\Irr(U/\bZ(U)) \smallsetminus \{1_{U/\bZ(U)} \}$ with exactly $q^{m-1}-1$ fixed points. 
Certainly, the trace of
$u$ in its action on $\sum_{\lambda \in \mathcal{O}'}V_{\lambda}$ for any nontrivial $u$-orbit $\mathcal{O}'$ on $\Irr(U/\bZ(U)) \smallsetminus \{1_{U/\bZ(U)} \}$ is zero. On the other hand, each $L$-orbit $\mathcal{O}$ on $\Irr(U/\bZ(U)) \smallsetminus \{1_{U/\bZ(U)} \}$ has length $q^{2m-3}(q^{2m-2}-1)$ or
$(q^{2m-3}+1)(q^{2m-2}-1)$. Writing $\chi_{\mathcal {O}}$ for the $P$-character of the nonzero submodule of $V_1$ corresponding to 
$\mathcal{O} \ni \lambda$, we then have
\begin{equation*} 
|\chi_{\mathcal{O}}(u)| \leq (q^{m-1}-1)\dim(V_{\lambda}) = (q^{m-1}-1) \cdot \frac{\chi_{\mathcal{O}}(1)}{|\mathcal{O}|} < 
\frac{\chi_{\mathcal{O}}(1)}{q^{m-1}}.
\end{equation*}
Summing over all $\mathcal{O}$ occuring in $V_1$, we get
\begin{equation}\label{su-8}
  |\chi_1(u)| \leq \frac{\chi_1(1)}{q^{m-1}}.
\end{equation}  

Finally, as explained in \cite[\S5]{GMST}, we can decompose
\begin{equation*}
V_2 = \sum_{1_{\bZ(U)} \neq \beta \in \Irr(\bZ(U))}E_\beta \otimes V'_\beta,
\end{equation*}
as a direct sum of $\bZ(U)$-eigenspaces. Here, $E_\beta$ lies over $\beta \in  \Irr(\bZ(U)) \smallsetminus \{1_{\bZ(U)}\}$, and its restriction to 
$[L,L] \cong \SU_{n-2}(q) \ni v$ yields a reducible Weil module of dimension $q^{n-2}$, and, according to \cite[\S4]{TZ}, the trace of 
$v$ on $E_\beta$ is $q^{m-1}$. Furthermore, $U$ acts trivially on $V'_\beta$. Thus the absolute value of the trace of $u$ on 
$E_\beta \otimes V'_\beta$ is at most $\dim(E_\beta \otimes V'_\beta)/q^{m-1}$, and so
\begin{equation}\label{su-9}
|\chi_2(u)| \leq \frac{\chi_3(1)}{q^{m-1}}.
\end{equation}   
Since $\chi(1) \leq q^{(n-1)(n-2)/2}$, we have that
\begin{equation*}
|(\chi_1+\chi_2)(u)| \leq \frac{\chi(1)}{q^{n/2-1}} \leq \chi(1)^{\frac{n-2}{n-1}}.
\end{equation*}
Together with \eqref{su-7}, this completes the proof, if we choose $h^*(n) \geq f(n-1)+1$.
\end{proof}

\begin{proof}[of \cref{cor:slsu}]
Note that Theorem 1.3 and Proposition 4.3 of \cite{bezrukavnikov-liebeck-shalev-tiep:2017:character-bounds-grps-Lie-type} show that 
the exponent $\dfrac{n-2}{n-1}$ in \cref{thm:glsl,thm:gusu} is best possible. Next, the proofs of Corollary 1.14 of \cite{bezrukavnikov-liebeck-shalev-tiep:2017:character-bounds-grps-Lie-type} can be repeated verbatim, but using \cref{thm:glsl}, respectively \cref{thm:gusu}, instead of \cite[Thm.~1.5]{bezrukavnikov-liebeck-shalev-tiep:2017:character-bounds-grps-Lie-type} to yield \cref{cor:slsu}.
\end{proof}

\appendix
\section{Decomposing Semisimple Objects in Abelian Categories}\label{sec:decomp-semisimple-obj}
\begin{assumption}
From now until the end of this section we assume that $\mathscr{A}$ is a locally finite $k$-linear abelian category, where $k = \overline{k}$ is an algebraically closed field, and $K \in \mathscr{A}$ is a fixed semisimple object.
\end{assumption}

\begin{pa}\label{pa:semisimple-obj}
We refer to \cite[Chapter 1]{etingof-gelaki-nikshych-ostrik:2015:tensor-categories} for the basic definitions concerning abelian categories. Recall that an object $A \in \mathscr{A}$ is said to be a \emph{summand} of $K$ if there exists a pair of morphisms
\begin{equation*}
\begin{tikzcd}
A \arrow[r,bend left,"m"] & K \arrow[l,bend left,"p"]
\end{tikzcd}
\end{equation*}
such that $pm = \Id_A$. Note we necessarily have $p$ is an epimorphism, $m$ is a monomorphism, and $u = mp$ is an idempotent. Moreover, if $B = \Ker(u)$ then there exist morphisms
\begin{equation*}
\begin{tikzcd}
B \arrow[r,bend left,"\iota"] & K \arrow[l,bend left,"q"]
\end{tikzcd}
\end{equation*}
such that $q\iota = \Id_B$ and $mp + \iota q = \Id_K$. In other words, we have $K \cong A \oplus B$. We write $A \mid K$ to indicate that $A$ is a summand of $K$.
\end{pa}

\begin{pa}
As $K$ is assumed to be semisimple there exist finitely many simple objects $A_1,\dots,A_r \in \mathscr{A}$ and morphisms
\begin{equation*}
\begin{tikzcd}
A_j \arrow[r,bend left,pos=0.45,"m_j"] & K \arrow[l,bend left,pos=0.51,"p_j"]
\end{tikzcd}
\end{equation*}
such that $m_1p_1+\cdots+m_rp_r = \Id_K$ and $p_im_j = \delta_{i,j}\Id_{A_j}$, where $\delta_{i,j}$ is the Kronecker delta. We usually write $K = A_1\oplus\cdots\oplus A_r$ to indicate it is a direct sum. As the Krull-Schmidt theorem holds in $\mathscr{A}$ we have the following.
\end{pa}

\begin{lem}\label{lem:semisimple-summands}
If $A \mid K$ is a summand of $K$ then $A$ is semisimple and we have an injection $\Irr(\mathscr{A}\mid A) \to \Irr(\mathscr{A}\mid K)$.
\end{lem}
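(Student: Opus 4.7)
My plan rests on the Krull--Schmidt theorem, which is available here because $\mathscr{A}$ is locally finite and $k$-linear with $k = \overline{k}$: every object has finite length, all Hom-spaces are finite-dimensional, and consequently any object of $\mathscr{A}$ decomposes as a finite direct sum of indecomposables, uniquely up to isomorphism and permutation of factors.

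First I would establish semisimplicity of $A$. Since $A \mid K$, the discussion in \cref{pa:semisimple-obj} gives a decomposition $K \cong A \oplus B$. As $K$ has finite length, so do $A$ and $B$, and hence both decompose into finitely many indecomposable summands, say $A \cong C_1 \oplus \cdots \oplus C_s$ and $B \cong D_1 \oplus \cdots \oplus D_t$. Combining these with the given simple decomposition of $K$ yields
\[
A_1 \oplus \cdots \oplus A_r \;\cong\; K \;\cong\; C_1 \oplus \cdots \oplus C_s \oplus D_1 \oplus \cdots \oplus D_t,
\]
two decompositions of $K$ into indecomposables. By Krull--Schmidt, after a permutation we have $C_j \cong A_{i_j}$ for some index $i_j$, so each $C_j$ is in fact simple. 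Hence $A$ is a finite direct sum of simple objects, i.e.\ semisimple.

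For the map $\Irr(\mathscr{A} \mid A) \to \Irr(\mathscr{A} \mid K)$, I would simply take the inclusion of subsets of $\Irr(\mathscr{A})$: an element of $\Irr(\mathscr{A} \mid A)$ is by definition the isomorphism class of some simple summand $C$ of $A$; but $C$ is then also a summand of $K = A \oplus B$, so $[C] \in \Irr(\mathscr{A} \mid K)$. Injectivity is automatic because both sets are subsets of the common ambient set $\Irr(\mathscr{A})$.

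The only genuinely non-formal input in this argument is Krull--Schmidt; everything else is bookkeeping. Given the standing hypothesis that $\mathscr{A}$ is locally finite over an algebraically closed field, I do not anticipate any real obstacle, and the proof should be quite short.
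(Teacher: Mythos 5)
Your proposal is correct and matches the paper's (implicit) approach: the paper states this lemma without a detailed proof, simply noting that it follows from the Krull--Schmidt theorem in $\mathscr{A}$, and your argument supplies exactly the expected bookkeeping. Nothing to flag.
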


\begin{pa}
We will denote by $\mathcal{A} = \End_{\mathscr{A}}(K)$ the endomorphism algebra of $K$, which is a finite dimensional $k$-algebra. We have a contravariant $k$-linear functor
\begin{equation*}
\mathfrak{F}_K = \Hom_{\mathscr{A}}(-,K) : \mathscr{A} \to \lmod{\mathcal{A}}
\end{equation*}
where $\Hom_{\mathscr{A}}(A,K)$ is naturally a left $\mathcal{A}$-module via left composition.
\end{pa}

\begin{lem}\label{lem:bij-iso-classes}
Recall our assumption that $K$ is semisimple. The algebra $\mathcal{A}$ is semisimple and the functor $\mathfrak{F}_K$ defines a bijection $\Irr(\mathscr{A}\mid K) \to \Irr(\mathcal{A})$.
\end{lem}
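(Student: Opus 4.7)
The plan is to exhibit an explicit Wedderburn-type decomposition of $\mathcal{A}$ and then read off the bijection directly from the isotypic decomposition of $K$.

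First I would fix representatives $A_1,\dots,A_r$ for the isomorphism classes in $\Irr(\mathscr{A}\mid K)$ and group the summands $A_1,\dots,A_r$ from the lemma's preamble into isotypic components, writing $K \cong \bigoplus_{i=1}^r A_i^{\oplus n_i}$ for some positive integers $n_i$. Because $\mathscr{A}$ is locally finite and $k$ is algebraically closed, Schur's lemma applies: $\End_{\mathscr{A}}(A_i)=k\cdot\Id_{A_i}$ and $\Hom_{\mathscr{A}}(A_i,A_j)=0$ for $i\neq j$. Combining with the direct sum decomposition gives
\begin{equation*}
\mathcal{A} \;=\; \End_{\mathscr{A}}(K) \;\cong\; \bigoplus_{i=1}^{r} \End_{\mathscr{A}}(A_i^{\oplus n_i}) \;\cong\; \bigoplus_{i=1}^{r} M_{n_i}(k),
\end{equation*}
which is a semisimple $k$-algebra.

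Next I would analyze $\mathfrak{F}_K$ on each simple summand. Using the same Schur computation,
\begin{equation*}
\mathfrak{F}_K(A_i)\;=\;\Hom_{\mathscr{A}}(A_i,K)\;\cong\;\Hom_{\mathscr{A}}(A_i,A_i^{\oplus n_i})\;\cong\;k^{n_i},
\end{equation*}
and under the Wedderburn isomorphism above the factor $M_{n_j}(k)$ acts on $\mathfrak{F}_K(A_i)$ by zero if $j\neq i$ and by the tautological action on $k^{n_i}$ if $j=i$. Hence $\mathfrak{F}_K(A_i)$ is a simple $\mathcal{A}$-module, and the modules $\mathfrak{F}_K(A_1),\dots,\mathfrak{F}_K(A_r)$ are pairwise non-isomorphic. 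Since $\bigoplus_i M_{n_i}(k)$ has exactly $r$ isomorphism classes of simple modules (one per matrix block), the induced map $[A_i]\mapsto[\mathfrak{F}_K(A_i)]$ is the desired bijection $\Irr(\mathscr{A}\mid K)\to \Irr(\mathcal{A})$.

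I do not expect any serious obstacle: the only things one has to be careful about are that Schur's lemma is available (which requires the algebraic closure of $k$ together with local finiteness of $\mathscr{A}$ so that $\End_{\mathscr{A}}(A_i)$ is a finite dimensional division algebra over $k$, hence equal to $k$), and that the $M_{n_i}(k)$-action picked out by $\mathfrak{F}_K$ matches the standard action on $k^{n_i}$ — this is immediate once one fixes the idempotents $m_j p_j$ from \cref{pa:semisimple-obj} and writes the isomorphisms explicitly, so it is essentially bookkeeping rather than a substantive difficulty.
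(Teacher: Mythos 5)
Your argument is correct and organizes the proof somewhat differently from the paper's. You take a Wedderburn-first route: group $K$ into isotypic components $K\cong\bigoplus_i B_i^{\oplus n_i}$, deduce $\mathcal{A}\cong\bigoplus_i M_{n_i}(k)$ from Schur's lemma, and then read off the bijection by matching each $[B_i]$ to the standard column module $k^{n_i}$ of the $i$th matrix block. The paper keeps the ungrouped decomposition $K=A_1\oplus\cdots\oplus A_r$, forms the idempotents $e_j=m_jp_j$, writes $\mathcal{A}=\bigoplus_j\mathcal{A}e_j$, shows each left ideal $\mathcal{A}e_j$ is simple via the Schur isomorphism $e_k\mathcal{A}e_j\cong\Hom_{\mathscr{A}}(A_j,A_k)$, identifies $\mathfrak{F}_K(A_j)\cong\mathcal{A}e_j$ by right multiplication by $p_j$, and then checks surjectivity (every simple $\mathcal{A}$-module embeds in the regular module) and injectivity (Schur once more) of the induced map on isomorphism classes. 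Your version is more global --- fix an explicit matrix model of $\mathcal{A}$ and the rest is reading off --- while the paper's is more intrinsic, never choosing an isomorphism with a matrix algebra and working with the canonical idempotents carried by the data; it also records the left-ideal isomorphism $\mathfrak{F}_K(A_j)\cong\mathcal{A}e_j$, which the surrounding lemmas reuse. Both hinge on exactly the same hypotheses (algebraic closure of $k$ and local finiteness of $\mathscr{A}$ so that $\End_{\mathscr{A}}(A_i)=k$ by Schur's lemma). A minor notational point: you overload $A_1,\dots,A_r$, first as the summands of $K$ with multiplicity and then as representatives of the isomorphism classes; giving the latter different names, as above, would avoid ambiguity.
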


\begin{proof}
We have $\mathcal{A} = \oplus_{j=1}^r \mathcal{A}e_j$, where $e_j = m_jp_j \in \mathcal{A}$ is an idempotent. For any $1 \leqslant j,k\leqslant r$ we have a $k$-linear isomorphism $\Hom_{\mathscr{A}}(A_j,A_k) \to e_k\mathcal{A}e_j$ defined by $f \mapsto m_jfp_k$ so $\mathcal{A}e_j$ is simple by Schur's Lemma, see \cite[1.8.4]{etingof-gelaki-nikshych-ostrik:2015:tensor-categories} and \cite[3.18]{curtis-reiner:1981:methods-vol-I}. Thus $\mathcal{A}$ is semisimple and $\mathfrak{F}_K(A_j)$ is simple because right multiplication by $p_j$ defines an isomorphism $\mathfrak{F}_K(A_j) \to \mathcal{A}e_j$ of $\mathcal{A}$-modules.

The resulting map on isomorphism classes is surjective because every simple $\mathcal{A}$-module is a submodule of $\mathcal{A}$. Moreover, this is injective by Schur's Lemma because we have standard $k$-linear isomorphisms
\begin{equation*}
\Hom_{\mathcal{A}}(\mathfrak{F}_K(A_j),\mathfrak{F}_K(A_k)) \cong \Hom_{\mathcal{A}}(\mathcal{A}e_j,\mathcal{A}e_k) \cong e_j\mathcal{A}e_k \cong \Hom_{\mathcal{A}}(A_k,A_j)
\end{equation*}
where the second isomorphism is given by $f \mapsto e_jf(e_j)e_k$.
\end{proof}

\begin{lem}\label{lem:hom-to-end-alg}
For any summand $A \mid K$ and object $B \in \mathscr{A}$ we have a $k$-linear isomorphism
\begin{equation*}
\mathfrak{F}_K : \Hom_{\mathscr{A}}(B,A) \to \Hom_{\mathcal{A}}(\mathfrak{F}_K(A),\mathfrak{F}_K(B)).
\end{equation*}
\end{lem}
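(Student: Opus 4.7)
The plan is to exhibit an explicit two-sided inverse to $\mathfrak{F}_K$. Since $\mathfrak{F}_K$ is a $k$-linear functor, the map $f \mapsto \mathfrak{F}_K(f)$ is automatically $k$-linear, so only bijectivity needs argument. Using the summand data for $A \mid K$ recalled in \cref{pa:semisimple-obj}, namely morphisms $m : A \to K$ and $p : K \to A$ satisfying $pm = \Id_A$, I would define a candidate inverse
\[
\Psi : \Hom_{\mathcal{A}}(\mathfrak{F}_K(A), \mathfrak{F}_K(B)) \to \Hom_{\mathscr{A}}(B, A), \qquad \Psi(\phi) = p \circ \phi(m),
\]
which makes sense since $\phi(m) \in \mathfrak{F}_K(B) = \Hom_{\mathscr{A}}(B, K)$, so post-composition with $p$ lands in $\Hom_{\mathscr{A}}(B, A)$.

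One direction is immediate from $pm = \Id_A$: for any $f : B \to A$, one computes $\Psi(\mathfrak{F}_K(f)) = p \circ (m \circ f) = (pm) \circ f = f$, so $\Psi \circ \mathfrak{F}_K = \Id$.

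The substantive step is the other composition, which is where the hypothesis that $\phi$ is $\mathcal{A}$-linear enters. The observation I would exploit is that $m \in \mathfrak{F}_K(A)$ generates this module: any $g : A \to K$ can be rewritten as $g = g \circ (pm) = (g \circ p) \cdot m$, with $g \circ p \in \mathcal{A}$ acting on $m$ by left composition. Applying $\phi$ then gives
\[
\phi(g) = (g \circ p) \cdot \phi(m) = g \circ p \circ \phi(m) = g \circ \Psi(\phi) = \mathfrak{F}_K(\Psi(\phi))(g),
\]
so $\mathfrak{F}_K \circ \Psi = \Id$ as well.

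I do not anticipate any genuine obstacle here; the content is essentially the Yoneda-style identification $\Hom_{\mathcal{A}}(\mathcal{A}u, N) \cong uN$ for the idempotent $u = mp \in \mathcal{A}$ of \cref{pa:semisimple-obj}, transported through the canonical isomorphism $\mathcal{A}u \cong \mathfrak{F}_K(A)$ sending $\alpha u \mapsto \alpha m$. The whole argument compresses to the single identity $g = (g \circ p) \cdot m$ extracting $g$ from its value at $m$.
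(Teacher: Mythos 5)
Your proof is correct, and it takes a genuinely different route from the paper's. The paper first reduces to the case where $A = A_j$ is simple (via \cref{lem:semisimple-summands} and additivity), proves injectivity by noting that $m_j\varphi = 0$ forces $\varphi = 0$ since $m_j$ is a monomorphism, and then gets surjectivity from a dimension count (citing \cite[4.1.2]{geck-jacon:2011:hecke-algebras}). You instead write down an explicit two-sided inverse $\Psi(\phi) = p \circ \phi(m)$ and verify both composites, using only the retraction $pm = \Id_A$ and the $\mathcal{A}$-linearity of $\phi$ through the identity $g = (g\circ p)\cdot m$. The two approaches buy different things: the paper's argument is a one-line reduction plus a quotable dimension count that fits the local infrastructure (Schur's lemma, the idempotents $e_j$), whereas yours avoids the reduction to simples, never invokes semisimplicity of $K$ or any finite-dimensionality, and would work verbatim in any $k$-linear category for any summand $A$ of any object $K$. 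In this context both are short, but your version makes the mechanism (that $m$ generates $\mathfrak{F}_K(A)$ as a left $\mathcal{A}$-module, with kernel controlled by the idempotent $u = mp$) completely transparent, which is exactly the Yoneda-style $\Hom_{\mathcal{A}}(\mathcal{A}u, N) \cong uN$ identification you name at the end.
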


\begin{proof}
By \cref{lem:semisimple-summands} we can assume $A = A_j$ for some $1 \leqslant j \leqslant r$. Now assume $\varphi \in \Hom_{\mathscr{A}}(B,A_j)$ satisfies $\mathfrak{F}_K(\varphi) = 0$ so $m_j\varphi = 0$. As $m_j$ is a monomorphism this implies $\varphi = 0$, hence the map is injective. Counting dimensions, exactly as in the proof of \cite[4.1.2]{geck-jacon:2011:hecke-algebras}, we get the map is surjective.
\end{proof}

\begin{pa}
We now assume that $\mathscr{B}$ is another locally finite $k$-linear abelian category and $\mathfrak{I} : \mathscr{A} \to \mathscr{B}$ is a $k$-linear functor. If $L := \mathfrak{I}(K)$ and $\mathcal{B} := \End_{\mathscr{B}}(L)$ then $\mathfrak{I}$ defines a $k$-algebra homomorphism $\mathfrak{I} : \mathcal{A} \to \mathcal{B}$. In particular, we may view $\mathcal{B}$ as an $(\mathcal{A},\mathcal{A})$-bimodule by restricting through $\mathfrak{I}$. With this we have a corresponding functor $\mathcal{B}\otimes_{\mathcal{A}}- : \lmod{\mathcal{A}} \to \lmod{\mathcal{B}}$ where $\mathcal{B}\otimes_{\mathcal{A}} M$, for any $\mathcal{A}$-module $M \in \lmod{\mathcal{A}}$, is a left $\mathcal{B}$-module in the usual way.
\end{pa}

\begin{lem}\label{lem:I-func-is-ind}
If $A \mid K$ is a summand of $K$ then we have a $\mathcal{B}$-module isomorphism
\begin{equation*}
\phi : \mathcal{B} \otimes_{\mathcal{A}} \mathfrak{F}_K(A) \to \mathfrak{F}_L(\mathfrak{I}(A))
\end{equation*}
satisfying $\phi(b \otimes f) = b\mathfrak{I}(f)$.
\end{lem}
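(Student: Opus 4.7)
My plan is to write down an explicit two-sided inverse to $\phi$ using the splitting of $A$ as a summand of $K$.

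First I would check that $\phi$ is well defined and $\mathcal{B}$-linear. For $f : A \to K$ in $\mathfrak{F}_K(A)$ and $b \in \mathcal{B} = \End_{\mathscr{B}}(L)$, the composite $b\mathfrak{I}(f)$ lies in $\Hom_{\mathscr{B}}(\mathfrak{I}(A), L) = \mathfrak{F}_L(\mathfrak{I}(A))$ because $\mathfrak{I}(f) : \mathfrak{I}(A) \to L$. The assignment $(b,f) \mapsto b\mathfrak{I}(f)$ is $k$-bilinear, and it is $\mathcal{A}$-balanced since, using functoriality of $\mathfrak{I}$ together with the fact that the left $\mathcal{A}$-action on $\mathfrak{F}_K(A)$ is by composition,
\[
\phi(b\mathfrak{I}(a) \otimes f) = b\mathfrak{I}(a)\mathfrak{I}(f) = b\mathfrak{I}(af) = \phi(b \otimes af)
\]
for any $a \in \mathcal{A}$. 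Hence $\phi$ descends to the tensor product, and it is $\mathcal{B}$-linear by construction.

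Next, using \cref{pa:semisimple-obj}, fix morphisms $m : A \to K$ and $p : K \to A$ with $pm = \Id_A$, and define
\[
\psi : \mathfrak{F}_L(\mathfrak{I}(A)) \to \mathcal{B} \otimes_{\mathcal{A}} \mathfrak{F}_K(A), \qquad \psi(g) = g\mathfrak{I}(p) \otimes m.
\]
Here $g\mathfrak{I}(p) : L \to \mathfrak{I}(A) \to L$ defines an element of $\mathcal{B}$, and $m \in \mathfrak{F}_K(A)$, so $\psi$ is well defined; it is $\mathcal{B}$-linear in $g$ by inspection.

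Finally, I would verify $\phi\psi = \Id$ and $\psi\phi = \Id$ by two short calculations. Functoriality of $\mathfrak{I}$ and $pm = \Id_A$ give $\phi(\psi(g)) = g\mathfrak{I}(p)\mathfrak{I}(m) = g\mathfrak{I}(pm) = g$, while
\[
\psi(\phi(b \otimes f)) = b\mathfrak{I}(f)\mathfrak{I}(p) \otimes m = b\mathfrak{I}(fp) \otimes m = b \otimes (fp)m = b \otimes f(pm) = b \otimes f,
\]
where the middle equality moves the element $fp \in \mathcal{A} = \End_{\mathscr{A}}(K)$ across the tensor product. There is no real obstacle here: once the splitting $(m,p)$ is invoked the construction writes itself, and as a byproduct this gives that the isomorphism $\phi$ is independent of the chosen splitting.
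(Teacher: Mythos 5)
Your proof is correct and is essentially the argument in the paper, relying on the splitting $(m,p)$ of $A \mid K$ to construct the two-sided inverse. The paper packages the same computation by first identifying $\mathfrak{F}_K(A)$ with the left ideal $\mathcal{A}u$ and $\mathfrak{F}_L(\mathfrak{I}(A))$ with $\mathcal{B}\mathfrak{I}(u)$, where $u = mp$, and then exhibiting $\mathcal{B}\otimes_{\mathcal{A}}\mathcal{A}u \cong \mathcal{B}\mathfrak{I}(u)$; your direct formula $\psi(g) = g\mathfrak{I}(p)\otimes m$ unwinds to exactly that composite.
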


\begin{proof}
Let $m : A \to K$ and $p : K \to A$ be morphisms such that $pm = \Id_A$ and $u = mp \in \mathcal{A}$ is an idempotent. As
\begin{equation*}
\mathfrak{I}(p)\mathfrak{I}(m) = \mathfrak{I}(pm) = \mathfrak{I}(\Id_A) = \Id_{\mathfrak{I}(A)}
\end{equation*}
we have $\mathfrak{I}(A) \mid \mathfrak{I}(K)$ and $\mathfrak{I}(u)$ is an idempotent. By the universal property of the tensor product we have a $\mathcal{B}$-module homomorphism $\phi : \mathcal{B} \otimes_{\mathcal{A}} \mathcal{A}u \to \mathcal{B}\mathfrak{I}(u)$ satisfying $\phi(b \otimes a) = b\mathfrak{I}(a)$. Moreover, we have a $\mathcal{B}$-module homomorphism $\psi : \mathcal{B}\mathfrak{I}(u) \to \mathcal{B} \otimes_{\mathcal{A}} \mathcal{A}u$ satisfying $\psi(x) = x \otimes u$. As $x\mathfrak{I}(u) = x$ for any $x \in \mathcal{B}\mathfrak{I}(u)$ we clearly have $\phi\psi$ is the identity, hence $\phi$ is an isomorphism. The statement now follows because right multiplication by $p$ defines an isomorphism of $\mathcal{A}$-modules $\mathfrak{F}_K(A) \to \mathcal{A}u$ and left multiplication by $\mathfrak{I}(m)$ defines an isomorphism of $\mathcal{B}$-modules $\mathcal{B}\mathfrak{I}(u) \to \mathfrak{F}_L(\mathfrak{I}(A))$.
\end{proof}

\begin{cor}\label{cor:decomp-ind-semisimple}
If $L = \mathfrak{I}(K)$ is semisimple and $B \mid L$ then for any $A \mid K$ we have a $k$-linear isomorphism
\begin{equation*}
\Hom_{\mathscr{B}}(\mathfrak{I}(A),B) \to \Hom_{\mathcal{B}}(\mathfrak{F}_L(B),\mathcal{B} \otimes_{\mathcal{A}}\mathfrak{F}_K(A)).
\end{equation*}
\end{cor}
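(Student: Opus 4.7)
The plan is simply to chain the two preceding lemmas, with no additional ingredients. The hypothesis that $L = \mathfrak{I}(K)$ is semisimple places us in the exact situation of the section when we replace the data $(\mathscr{A}, K, \mathcal{A}, \mathfrak{F}_K)$ by $(\mathscr{B}, L, \mathcal{B}, \mathfrak{F}_L)$: in particular \cref{lem:hom-to-end-alg} becomes available in $\mathscr{B}$ for any summand of $L$ and any auxiliary object of $\mathscr{B}$.

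Concretely, I would first apply \cref{lem:hom-to-end-alg} (with $L$ playing the role of the fixed semisimple object) to the summand $B \mid L$, taking the auxiliary object to be $\mathfrak{I}(A) \in \mathscr{B}$. This produces a $k$-linear isomorphism
\[
\mathfrak{F}_L \colon \Hom_{\mathscr{B}}(\mathfrak{I}(A), B) \xrightarrow{\ \sim\ } \Hom_{\mathcal{B}}(\mathfrak{F}_L(B), \mathfrak{F}_L(\mathfrak{I}(A))).
\]
Next, since $A \mid K$, \cref{lem:I-func-is-ind} supplies a $\mathcal{B}$-module isomorphism
\[
\mathcal{B} \otimes_{\mathcal{A}} \mathfrak{F}_K(A) \xrightarrow{\ \sim\ } \mathfrak{F}_L(\mathfrak{I}(A)).
\]
Post-composing Hom with the inverse of this isomorphism of target modules gives the claimed isomorphism.

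There is no real obstacle to overcome: the corollary is a direct formal consequence of the two lemmas it follows, the semisimplicity of $L$ being used exactly to license \cref{lem:hom-to-end-alg} inside $\mathscr{B}$, and the hypothesis $A \mid K$ being used exactly to license \cref{lem:I-func-is-ind}. The only small care required is bookkeeping: tracking that the $\mathcal{B}$-action on $\mathcal{B} \otimes_{\mathcal{A}} \mathfrak{F}_K(A)$ arising from \cref{lem:I-func-is-ind} agrees with the left $\mathcal{B}$-module structure on $\mathfrak{F}_L(\mathfrak{I}(A))$ used in the first step, which is immediate from the explicit formula $\phi(b \otimes f) = b\,\mathfrak{I}(f)$ given in the proof of \cref{lem:I-func-is-ind}.
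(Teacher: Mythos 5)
Your proof is correct and matches the paper's proof exactly: both apply \cref{lem:hom-to-end-alg} in the category $\mathscr{B}$ with semisimple object $L$ to identify $\Hom_{\mathscr{B}}(\mathfrak{I}(A),B)$ with $\Hom_{\mathcal{B}}(\mathfrak{F}_L(B),\mathfrak{F}_L(\mathfrak{I}(A)))$, then use the $\mathcal{B}$-module isomorphism of \cref{lem:I-func-is-ind} to replace $\mathfrak{F}_L(\mathfrak{I}(A))$ by $\mathcal{B}\otimes_{\mathcal{A}}\mathfrak{F}_K(A)$.
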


\begin{proof}
As $L$ is semisimple we have a $k$-linear isomorphism
\begin{equation*}
\Hom_{\mathscr{B}}(\mathfrak{I}(A),B) \cong \Hom_{\mathcal{B}}(\mathfrak{F}_L(B),\mathfrak{F}_L(\mathfrak{I}(A)))
\end{equation*}
by \cref{lem:hom-to-end-alg}. The statement now follows from \cref{lem:I-func-is-ind}.
\end{proof}

\setstretch{0.96}

\end{document}